\newcommand{\Z}{{\ensuremath{\mathbb{Z}}}}
\theoremstyle{definition}
\newtheorem{thm}{Theorem}[section]
\newtheorem{cor}[thm]{Corollary}
\newtheorem{defi}[thm]{Definition}
\newtheorem{lem}[thm]{Lemma}
\newtheorem{rem}[thm]{Remark}
\newtheorem{prp}[thm]{Proposition}
\title[Images of graded polynomials on upper triangular matrices]{Images of multilinear graded polynomials on upper triangular matrix algebras}
\author[P. Fagundes]{Pedro Fagundes}
\address{IMECC, Universidade Estadual de
		Campinas, Rua S\'ergio Buarque de Holanda, 651, Cidade Universit\'aria ``Zeferino Vaz'', Distr. Bar\~ao Geraldo, Campinas, S\~ao Paulo, Brazil, CEP
		13083-859}\email{pedro.fagundes@ime.unicamp.br}
\author[P. Koshlukov]{Plamen Koshlukov}
\address{IMECC, Universidade Estadual de
		Campinas, Rua S\'ergio Buarque de Holanda, 651, Cidade Universit\'aria ``Zeferino Vaz'', Distr. Bar\~ao Geraldo, Campinas, S\~ao Paulo, Brazil, CEP
		13083-859}\email{plamen@unicamp.br}	
\begin{document}

\maketitle

\begin{abstract}
    In this paper we study the images of multilinear graded polynomials on the graded algebra of upper triangular matrices $UT_n$. For positive integers $q\leq n$, we classify these images on $UT_{n}$ endowed with a particular elementary $\Z_{q}$-grading. As a consequence, we obtain the images of multilinear graded  polynomials on $UT_{n}$ with the natural $\Z_{n}$-grading. We apply this classification in order to give a new condition for a multilinear polynomial in terms of graded identities so that to obtain the traceless matrices in its image on the full matrix algebra. We also describe the images of multilinear polynomials on the graded algebras $UT_{2}$ and $UT_{3}$, for arbitrary gradings. We finish the paper by proving a similar result for the graded Jordan algebra $UJ_{2}$, and also for $UJ_{3}$ endowed with the natural elementary $\Z_{3}$-grading. 
    
   \noindent
{\bf AMS subject classification} (2020): 15A54, 16R50, 17C05  
\\
{\bf Key words:} Images of polynomials, Lvov-Kaplansky conjecture, upper triangular matrices, graded algebras, Jordan algebras
\end{abstract}

\section{Introduction}
Let $\mathcal{A}$ be an associative algebra over a field $F$, and let $f\in F\langle X\rangle$ be a multilinear polynomial from the free associative algebra $F\langle X\rangle$. Lvov posed the question to determine whether the image of a multilinear $f$ when evaluated on $\mathcal{A}=M_n(F)$, is always a vector subspace of $M_n(F)$, see \cite[Problem 1.93]{dnestr}. The original question is attributed to Kaplansky and asks the determination of the image of a polynomial $f$ on $\mathcal{A}$. It is well known that the above question is equivalent to that of determining whether the image of a multilinear $f$ on $M_n(F)$ is 0, the scalar matrices, $sl_n(F)$ or $M_n(F)$. Clearly the first possibility corresponds to $f$ being a polynomial identity on $M_n(F)$, and the second gives the central polynomials. 

Recall here that the description of all polynomial identities on $M_n(F)$ is known only for $n\le 2$, see \cite{razmal, dral} for the case when $F$ is of characteristic 0, and \cite{pkm2} for the case of $F$ infinite of characteristic $p>2$. The same holds for the central polynomials  \cite{okhitin, jcpk}. The theorem of Amitsur and Levitzki gives the least degree polynomial identity for $M_n(F)$, the standard polynomial $s_{2n}$, see \cite{am_lev}. Recall also that one of the major breakthroughs in PI theory was achieved by Formanek and by Razmyslov \cite{for, razm} who proved the existence of nontrivial (that is not identities) central polynomials for the matrix algebras. As for $sl_n(F)$, a theorem of Shoda \cite{shoda} gives that every $n\times n$ matrix over a field of zero characteristic is the commutator of two matrices; later on Albert and Muckenhoupt \cite{albert} generalized this to arbitrary fields. Hence all four conjectured possibilities for the image of a multilinear polynomial on $M_n(F)$ can be achieved. 

The study of images of polynomials on the full matrix algebra is of considerable interest for obvious reasons, among these the relation to polynomial identities. In \cite{belov1} the authors settled the conjecture due to Lvov in the case of $2\times 2$ matrices over a quadratically closed field $F$ (that is if $f$ is a given polynomial in several variables then every polynomial in one variable of degree $\le 2\deg f$ over $F$ has a root in $F$). They proved that for every multilinear polynomial $f$ and for every field $F$ that is quadratically closed with respect to $f$, the image of $f$ on $M_2(F)$ is 0, $F$, $sl_2(F)$ or $M_2(F)$. It should be noted that the authors in \cite{belov1} proved a stronger result. Namely they considered a so-called semi-homogeneous polynomial $f$: a polynomial in $m$ variables $x_1$, \dots, $x_m$ of weights $d_1$, \dots, $d_m$ respectively such that every monomial of $f$ is of (weighted) degree $d$ for a fixed $d$. They proved that the image of such a polynomial on $M_2(F)$ must be 0, $F$, $sl_2(F)$, the set of all non-nilpotent traceless matrices, or a dense subset of $M_2(F)$. Here the density is according to the Zariski topology. Later on in \cite{malev} the author gave the solution to the problem for $2\times 2$ matrices for the case when $F$ is the field of the real numbers. In the case of $3\times 3$ matrices the known results can be found in \cite{belov2}. The images of polynomials on $n\times n$ matrices for $n>3$ are hard to describe, and there are only partial results, see for example \cite{belov3}. Hence in the case of $n\times n$ matrices one is led to study images of polynomials of low degree. Interesting results in  this direction are due to \v Spenko \cite{spenko}, she proved the conjecture raised by Lvov in case $F$ is an algebraically closed field of characteristic 0, and $f$ is a multilinear Lie polynomial of degree at most 4. Further advances in the field were made in  \cite{br_kl1, br_kl2, br}. In \cite{br} the author proved that if $A$ is an algebra over an infinite field $F$ and $A=[A,A]$ then the image of an arbitrary polynomial which is neither an identity nor a central polynomial, equals $A$. Recently Malev \cite{malev_quat} described completely the images of multilinear polynomials on the real quaternion algebra; he also described the images of semi-homogeneous polynomials on the same algebra.

If the base field $F$ is finite, a theorem of Chuang \cite{chuang} states that the image of a polynomial without constant term can be every subset of $M_n(F)$ that contains $0$ and is closed under conjugation by invertible matrices. In the same paper it was also shown that such a statement fails when $F$ is infinite. 

Therefore it seems likely it should be very difficult to describe satisfactory the images of multilinear polynomials on $M_n(F)$. That is why people started studying images of polynomials on ``easier" algebras, and also on algebras with an additional structure. The upper triangular matrix algebras are quite important in Linear Algebra because of their applications to different branches of Mathematics and Physics. They are also very important in PI theory: they describe, in a sense, the subvarieties of the variety of algebras generated by $M_2(F)$ in characteristic 0. Block-triangular matrices appear in the description of the so-called minimal varieties of algebras. The images of polynomials on the upper triangular matrices have been studied rather extensively. In \cite{wang} the author described the images of multilinear polynomials on $UT_2(F)$, the $2\times 2$ upper triangular matrices over a field $F$. The images of multilinear polynomials of degree up to 4 on $UT_n=UT_n(F)$ for every $n$ were classified in \cite{mello_fag}, and in \cite{Fag} the first named author of the present paper described the images of arbitrary multilinear polynomials on the strictly upper triangular matrices. It turned out that if $f$ is a multilinear polynomial of degree $m$ then its image on the strictly upper triangular matrix algebra $J$ is either 0 or $J^{m}$. The following conjecture was raised in \cite{mello_fag}: Is the image of a multilinear polynomial on $UT_n(F)$ always a vector subspace of $UT_n(F)$? This conjecture was solved independently in \cite{LWa}, for infinite fields (or finite fields with sufficiently many elements), and in \cite{GMe}. Further results concerning images of polynomials on the upper triangular matrix algebra can be found in \cite{tcm, zw, wzl}.

Gradings on algebras appeared long ago; the polynomial ring in one or several variables is naturally graded by the infinite cyclic group $\mathbb{Z}$ by the degree. Gradings on algebras by finite groups are important in Linear Algebra and also in Theoretical Physics: the Grassmann (or exterior) algebra is naturally graded by the cyclic group of order 2, $\mathbb{Z}_2$. In fact the Grassmann algebra is the most well-known example of a superalgebra. It should be noted that while in the associative case, the term ``superalgebra" is synonymous to ``$\textbf{Z}_2$-graded algebra", if one considers nonassociative algebras these notions are very different: a Lie or a Jordan superalgebra seldom is a Lie or a Jordan algebra. We are not going to discuss further such topics because these are not relevant for our exposition.

In \cite{wall}, Wall classified the finite dimensional $\mathbb{Z}_2$-graded algebras that are graded simple. Later on the description of all gradings on matrix algebras was obtained as well as on simple Lie and Jordan algebras. We refer the readers to the monograph \cite{ek} for the state-of-art and for further references. In PI theory gradings appeared in the works of Kemer, see \cite{kemer}, and constituted one of the main tools in the classification of the ideals of identities of associative algebras, which in turn led him to the positive solution of the long-standing Specht Problem. It turns out that the graded identities are easier to describe than the ordinary ones; still they provide a lot of information on the latter. It is somewhat surprising that the images of polynomials have not been studied extensively in the graded setting. 
In \cite{Kul}, Kulyamin described the images of graded polynomials on matrix algebras over the group algebra of a finite group over a finite field. 

The upper triangular matrix algebra admits various gradings, these were shown to be isomorphic to elementary ones, see \cite{VZa}. A grading on a subalgebra $A$ of $M_n(F)$ is elementary if all matrix units $e_{ij}\in A$ are homogeneous in the grading. All elementary gradings on $UT_n$ were classified in \cite{VKV}; in the same paper the authors described the graded identities for all these gradings. In this paper we fix an arbitrary field $F$ and the upper triangular matrix algebra $UT_n$. 

In Section \ref{sect3} we prove that for an arbitrary group grading on $UT_n$, $n>1$, there are no nontrivial graded central polynomials. (Hence the image of a graded polynomial on $UT_n$ cannot be equal to the scalar matrices whenever $n>1$.) In Section \ref{sect4} we consider a specific grading on $UT_n$, and describe all possible images of multilinear graded polynomials for that grading. It turns out that the images are always homogeneous vector subspaces. We impose a mild restriction on the cardinality of the base field. As a by-product of the proof we obtain a precise description of the graded identities for this grading.

In Section \ref{sect5} we give a sufficient condition for the traceless matrices to be contained in the image of a multilinear graded polynomial. Once again we require a mild condition on the cardinality of the field. Section \ref{sect6} studies the graded algebras $UT_2$ and $UT_3$. We prove that the image of a multilinear graded  polynomial on $UT_2$, for every group grading, is a homogeneous subspace. In the case of $UT_3$, the image of such a polynomial is also a homogeneous subspace provided that the grading is nontrivial. In the case of the trivial grading, if the field contains at least 3 elements then the image of every multilinear polynomial is a vector subspace. Then we proceed with the Jordan algebra structure $UJ_n$ obtained from $UT_n$ by the Jordan (symmetric) product $a\circ b= ab+ba$ provided that the characteristic of the base field is different from 2. The description of all group gradings on $UJ_n$ is more complicated than that on $UT_n$, see \cite{KYa}, there appear gradings that are not isomorphic to elementary ones. The gradings on $UJ_2$ were described in \cite{KMa}. We consider each one of these gradings, and prove that the image of a multilinear graded  polynomial is always a homogeneous subspace. No restrictions on the base field are imposed (apart from the characteristic being different from 2). An analogous result is deduced for the Lie algebra $UT_2^{(-)}$ obtained from $UT_n$ by substituting the associative product by the Lie bracket $[a,b]=ab-ba$. Finally we consider $UJ_3$ equipped with the natural $\mathbb{Z}_3$-grading: $\deg e_{ij} = j-i\pmod{3}$ for every $i\le j$, assuming the base field infinite and of characteristic different from 2. We prove that the image of a multilinear graded polynomial is always a homogeneous subspace. 

We hope that this paper will initiate a more detailed study of images of polynomials on algebras with additional structures.

\section{Preliminaries}

Unless otherwise stated, we denote by $F$ an arbitrary field and $\mathcal{A}$ an associative algebra over $F$. Given a group $G$, a $G$-grading on $\mathcal{A}$ is a decomposition of $\mathcal{A}$ in a direct sum of subspaces $\mathcal{A}=\bigoplus_{g\in G}\mathcal{A}_{g}$ such that $\mathcal{A}_{g}\mathcal{A}_{h}\subset \mathcal{A}_{gh}$, for all $g$, $h \in G$. We define the support of a $G$-grading on $\mathcal{A}$ as the subset $supp(\mathcal{A})=\{g\in G\mid \mathcal{A}_{g}\neq0\}$. A subspace $U$ of $\mathcal{A}$ is called homogeneous if $U=\bigoplus_{g\in G}(U\cap \mathcal{A}_{g})$. A graded homomorphism between two graded algebras $\mathcal{A}=\bigoplus_{g\in G}\mathcal{A}_{g}$ and $\mathcal{B}=\bigoplus_{g\in G}\mathcal{B}_{g}$ is defined as an algebra homomorphism $\varphi\colon \mathcal{A}\rightarrow \mathcal{B}$ such that $\varphi(\mathcal{A}_{g})\subset \mathcal{B}_{g}$ for every $g\in G$. We denote by $F\langle X \rangle^{gr}$ the free $G$-graded associative algebra generated by a set of noncommuting variables $X=\{x_{i}^{(g)}\mid i\in\mathbb{N},g\in G\}$. We also denote the neutral (that is of degree $1\in G$) variables by $y$ and call them even variables, and the non neutral ones by $z$ and  we call them odd variables. We draw the reader's attention that odd variables may have different degrees in the $G$-grading. 

We define the image of a graded polynomial on an algebra as in \cite{Kul}.

\begin{defi}
Let $f\in F\langle X \rangle^{gr}$ be a $G$-graded polynomial. The image of $f$ on the $G$-graded algebra $\mathcal{A}$ is the set
\[
Im(f)=\{a\in\mathcal{A}\mid a=\varphi(f) \ \mbox{for some graded homomorphism} \ \varphi\colon F\langle X \rangle^{gr}\rightarrow \mathcal{A}\}
\]
\end{defi}

Equivalently, if $f(x_{1}^{(g_{1})},\dots,x_{n}^{(g_{n})})\in F\langle X \rangle^{gr}$, then the image of $f$ on the algebra $\mathcal{A}$ is the set $Im(f)=\{f(a_{1}^{(g_{1})},\dots,a_{n}^{(g_{n})})\mid a_{i}^{(g_{i})}\in \mathcal{A}_{g_{i}}\}$. We will also denote the image of $f$ on $\mathcal{A}$ by $f(\mathcal{A})$.

We now recall some basic properties of images of graded polynomials on algebras that will be used throughout the paper. 

\begin{prp}\label{basicprop}
Let $f\in F\langle X \rangle^{gr}$ be a polynomial and $\mathcal{A}$ a $G$-graded algebra. 
\begin{enumerate}
    \item Let $U$ be one-dimensional subspace of $\mathcal{A}$ such that $Im(f)\subset U$ and assume that $\lambda Im(f)\subset Im(f)$ for every $\lambda \in F$. Then either $Im(f)=\{0\}$ or $Im(f)=U$;
    \item If $1\in \mathcal{A}$ and $f\in F\langle X \rangle^{gr}$ is a multilinear polynomial in neutral variables such that the sum of its coefficients is nonzero, then $Im(f)=\mathcal{A}_{1}$;
    \item $Im(f)$ is invariant under graded endomorphisms of $F\langle X \rangle^{gr}$;
    \item If $supp(\mathcal{A})$ is abelian and $f\in F\langle X \rangle^{gr}$ is multilinear, then $Im(f)$ is a homogeneous subset.
    \end{enumerate}
\end{prp}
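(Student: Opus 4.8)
The plan is to dispatch the four items in turn, each being a short manipulation of the definition of $Im(f)$. For (1): if $Im(f)\neq\{0\}$, choose $a\in Im(f)$ with $a\neq 0$; since $\dim U=1$ and $a\in U$ we have $U=Fa$, and the hypothesis $\lambda\,Im(f)\subseteq Im(f)$ gives $Fa\subseteq Im(f)\subseteq U=Fa$, so $Im(f)=U$. For (2): write the multilinear polynomial in its neutral variables as $f=\sum_{\sigma\in S_n}\alpha_\sigma\, y_{\sigma(1)}\cdots y_{\sigma(n)}$ and put $c=\sum_\sigma\alpha_\sigma\neq 0$. Every $y_i$ is neutral, so any graded evaluation substitutes elements of $\mathcal{A}_1$ and $Im(f)\subseteq\mathcal{A}_1$; conversely, for $a\in\mathcal{A}_1$ the graded substitution $y_1\mapsto c^{-1}a$, $y_2=\cdots=y_n\mapsto 1$ (legal since $1\in\mathcal{A}_1$) produces $\sum_\sigma\alpha_\sigma\,(c^{-1}a)=c\cdot c^{-1}a=a$, because the identity factors drop out of each monomial; hence $Im(f)=\mathcal{A}_1$. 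For (3): if $\psi$ is a graded endomorphism of $F\langle X\rangle^{gr}$ and $\varphi\colon F\langle X\rangle^{gr}\to\mathcal{A}$ is any graded homomorphism, then $\varphi\circ\psi$ is again a graded homomorphism and $(\varphi\circ\psi)(f)=\varphi(\psi(f))$, so $Im(\psi(f))\subseteq Im(f)$, which is the asserted invariance.

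For (4), the key point is that a multilinear graded polynomial in variables of pairwise commuting degrees is automatically homogeneous. Since $\mathcal{A}_g=0$ for $g\notin supp(\mathcal{A})$, I may assume every variable $x_i^{(g_i)}$ of $f$ has $g_i\in supp(\mathcal{A})$: otherwise each graded evaluation has a zero argument and, by multilinearity, $Im(f)=\{0\}$, which is homogeneous. As $supp(\mathcal{A})$ is abelian the elements $g_1,\dots,g_n$ commute pairwise, so $g_{\sigma(1)}\cdots g_{\sigma(n)}=g_1\cdots g_n=:g$ for every $\sigma\in S_n$; hence each monomial of $f$ evaluates into $\mathcal{A}_g$ and $Im(f)\subseteq\mathcal{A}_g$. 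Being contained in a single homogeneous component (and containing $0$), $Im(f)$ is a homogeneous subset.

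I do not anticipate any serious obstacle: (4) is the only item requiring an idea, and that idea is merely the observation that commuting degrees force all monomials of a multilinear $f$ to share one and the same degree. The sole hypothesis on the field that is actually used is the existence of $c^{-1}$ in (2).
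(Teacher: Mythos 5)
Your proof is correct and follows essentially the same route as the paper: items (1) and (3) by the direct manipulations the paper calls straightforward, item (2) by substituting $c^{-1}a$ and the unit $1\in\mathcal{A}_1$, and item (4) by noting that either some variable's degree lies outside $supp(\mathcal{A})$ (so $Im(f)=\{0\}$) or all monomials share the common degree $g_1\cdots g_n$. No issues.
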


\begin{proof}
The proofs of the first and third items are straightforward. For the second item it is enough to recall that if $\mathcal{A}$ is a graded algebra with $1$, then $1\in\mathcal{A}_{1}$. Hence, given $a\in\mathcal{A}_{1}$ we have $a=f(\alpha^{-1} a,1,\dots,1)$, where $\alpha\neq0$ is the sum of the coefficients of $f$, and then $Im(f)=\mathcal{A}_{1}$. For the last item, let $g_{1}$, \dots, $g_{m}$ be the homogeneous degree of the variables that occur in $f$. If some $g_{i}\notin supp(\mathcal{A})$, then $Im(f)=\{0\}$ is a homogeneous subspace. Otherwise, since $supp(\mathcal{A})$ is abelian, we have that each monomial of $f$ is of homogeneous degree $g_{1}\cdots g_{m}$, and hence the same holds for $f$.  
\end{proof}

We say that a nonzero polynomial $f\in F\langle X \rangle^{gr}$ is a graded polynomial identity for a $G$-graded algebra $\mathcal{A}$ if its image on $\mathcal{A}$ is zero. The set of all graded polynomial identities of $\mathcal{A}$ will be denoted by $Id^{gr}(\mathcal{A})$. It is easy to check that $Id^{gr}(\mathcal{A})$ is actually an ideal of $F\langle X \rangle^{gr}$ invariant under graded endomorphisms of $F\langle X \rangle^{gr}$. It is called the $T_{G}$-ideal of $\mathcal{A}$. Given a nonempty subset $S$ of $F\langle X \rangle^{gr}$, we denote by $\langle S \rangle^{T_{G}}$ the $T_{G}$-ideal generated by $S$, that is the least $T_{G}$-ideal that contains the set $S$. The linearisation process also holds for graded polynomials, and as in the ordinary case we have the following statement.

\begin{prp}\label{multiidentity}
If $\mathcal{A}$ satisfies a graded polynomial identity, then $\mathcal{A}$ also satisfies a multilinear one. Moreover, if $char(F)=0$, then $Id^{gr}(\mathcal{A})$ is generated by its multilinear polynomials.
\end{prp}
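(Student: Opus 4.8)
The plan is to reduce everything to the usual (graded) linearisation procedure, carried out so as to respect the homogeneous degrees of the variables. For the first assertion, suppose $f\in F\langle X\rangle^{gr}$ is a nonzero graded identity of $\mathcal{A}$ that is not multilinear, so some variable $x_i^{(g)}$ occurs in $f$ with $\deg_{x_i^{(g)}} f = d\ge 2$. I would pick a fresh variable $x_j^{(g)}$ of the \emph{same} homogeneous degree $g$ (infinitely many are available, since $X$ contains $x_k^{(g)}$ for all $k\in\N$) and set
\[
h = f(\ldots, x_i^{(g)}+x_j^{(g)},\ldots) - f(\ldots, x_i^{(g)},\ldots) - f(\ldots, x_j^{(g)},\ldots).
\]
Because $x_i^{(g)}+x_j^{(g)}$ is homogeneous of degree $g$, every evaluation of $h$ on $\mathcal{A}$ vanishes, so $h\in Id^{gr}(\mathcal{A})$. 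Then I would verify the two standard facts: (i) $h\neq 0$ — if $w$ is a monomial of $f$ with $d$ occurrences of $x_i^{(g)}$, then the monomial obtained from $w$ by replacing its first occurrence of $x_i^{(g)}$ by $x_j^{(g)}$ appears in $h$ with the same nonzero coefficient it has in $f$, since re-substituting $x_j^{(g)}\mapsto x_i^{(g)}$ shows no other monomial of $f$ can produce it; and (ii) $\deg_{x_i^{(g)}} h\le d-1$ and $\deg_{x_j^{(g)}} h\le d-1$, while the degree of $h$ in every other variable is at most its degree in $f$. Hence, ordering graded polynomials by the multiset of degrees of their occurring variables, $h$ is strictly smaller than $f$ (the entry $d$ has been replaced by finitely many entries that are strictly smaller than $d$); this order is well founded, so iterating the construction terminates, necessarily at a (nonzero) multilinear identity. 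This settles the first claim over an arbitrary field.

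For the second assertion, assume $char(F)=0$, so $F$ is infinite, and let $f\in Id^{gr}(\mathcal{A})$. First I would split $f$ into its multihomogeneous components: writing $f=\sum_\alpha f_\alpha$, where $f_\alpha$ is homogeneous of multidegree $\alpha=(d_1,\ldots,d_k)$ in the occurring variables $x_1^{(g_1)},\ldots,x_k^{(g_k)}$, the substitutions $x_t^{(g_t)}\mapsto \lambda_t x_t^{(g_t)}$ with $\lambda_t\in F^{\times}$ (legitimate, as each $\mathcal{A}_{g_t}$ is a subspace) together with a Vandermonde argument over the infinite field $F$ show that each $f_\alpha\in Id^{gr}(\mathcal{A})$. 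So it suffices to treat a multihomogeneous $f$ of multidegree $(d_1,\ldots,d_k)$. For each $t$ with $d_t\ge 2$, replace $x_t^{(g_t)}$ by a sum $x_{t,1}^{(g_t)}+\cdots+x_{t,d_t}^{(g_t)}$ of $d_t$ fresh variables of degree $g_t$, and let $g$ be the component of the result that is multilinear in all the $x_{t,s}^{(g_t)}$. By inclusion–exclusion $g$ is a signed sum of substitutions of $f$ by homogeneous elements of the prescribed degrees, hence $g\in Id^{gr}(\mathcal{A})$, and $g$ is multilinear by construction. Finally, substituting $x_{t,s}^{(g_t)}\mapsto x_t^{(g_t)}$ for all $s$ recovers $\big(\prod_t d_t!\big)\,f$; since $char(F)=0$ this scalar is invertible, so $f\in\langle g\rangle^{T_G}$ lies in the $T_G$-ideal generated by the multilinear elements of $Id^{gr}(\mathcal{A})$. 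As the reverse inclusion is trivial, equality follows.

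The routine parts are the monomial bookkeeping in (i), the degree bookkeeping in (ii), and the inclusion–exclusion identity for $g$ in the second paragraph. I expect the only genuinely delicate point to be the first paragraph over an arbitrary field: ensuring the linearisation does not annihilate the polynomial and that the iteration terminates. Both are handled by (i) and by the well-foundedness of the multiset order in (ii); everything else is the familiar passage between a $T_G$-ideal and its multilinear part, with the single extra bookkeeping requirement that each auxiliary variable introduced carries the same homogeneous degree as the variable it is split off from, so that all substitutions used remain graded.
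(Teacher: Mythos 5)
The paper offers no proof of this proposition at all --- it simply asserts that ``the linearisation process also holds for graded polynomials'' and cites nothing --- so there is no argument to compare yours against except the standard one, which is indeed what you have written out, correctly adapted to the graded setting (the only graded input being that the auxiliary variables carry the same homogeneous degree as the variables they split, so that all substitutions are graded endomorphisms; your second paragraph, including the Vandermonde blending and the inclusion--exclusion extraction, is fine).

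There is, however, one genuine gap in the first paragraph. You open with ``suppose $f$ is not multilinear, so some variable occurs with degree $d\ge 2$,'' and your well-founded induction accordingly terminates only when every variable has degree at most $1$ in $f$. Neither of these is the same as multilinearity: the polynomial $x_1^{(g)}+x_2^{(h)}x_3^{(h^{-1}g)}$ has no variable of degree $\ge 2$ yet is not multilinear, since not every variable occurs in every monomial (equivalently, $f$ is not homogeneous of degree exactly $1$ in each occurring variable). So the iteration can halt at a nonzero identity that is still not multilinear, and the claimed conclusion does not yet follow. The repair is standard and short: once every variable has degree $\le 1$, choose a monomial $w$ of minimal total degree in $h$ and substitute $0$ (a legitimate homogeneous value of any degree) for every variable not occurring in $w$. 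The resulting polynomial is still a graded identity, is nonzero because it contains $w$, and every one of its monomials involves only the variables of $w$, each at most once, while having total degree at least $\deg w$ by minimality --- hence exactly the variables of $w$, each exactly once. That polynomial is multilinear, and with this final step your argument is complete.
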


Let now $\mathcal{A}=UT_{n}$ be the algebra of $n\times n$ upper triangular matrices over the field $F$. A $G$-grading on $\mathcal{A}$ is said to be elementary if all elementary matrices are homogeneous in this grading, or equivalently, if there exists an $n$-tuple $(g_{1},\dots,g_{n})\in G^{n}$ such that $\deg(e_{ij})=g_{i}^{-1}g_{j}$. A theorem of Valenti and Zaicev states that every grading on $UT_{n}$ is essentially elementary.

\begin{thm}[\cite{VZa}]\label{gradingsupper}
Let $G$ be a group and let $F$ be a field. Assume that $UT_{n}=\mathcal{A}=\bigoplus_{g\in G} \mathcal{A}_{g}$ is $G$-graded. Then $\mathcal{A}$ is $G$-graded isomorphic to $UT_{n}$ endowed with some elementary $G$-grading.
\end{thm}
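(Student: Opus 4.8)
The plan is to argue by induction on $n$, splitting off one primitive idempotent at each step and reducing to $UT_{n-1}$; the base case $n=1$ is immediate, since $UT_1=F$ is spanned by $1\in\mathcal A_1$, so the grading is (the) elementary one. The one external ingredient I would invoke is that the Jacobson radical $N=J(UT_n)$ — that is, the ideal of strictly upper triangular matrices — is a \emph{graded} ideal. This is standard for the radical of any finite dimensional $G$-graded algebra, and for $UT_n$ one can also see it fairly directly (for instance, $Fe_{1n}$ is the unique minimal two-sided ideal, hence homogeneous, and one can bootstrap from there). I expect this to be the only genuinely delicate point; the rest is a Peirce decomposition / idempotent lifting computation.

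Granting that $N$ is graded, all its powers $N^k$ are graded, so $V:=N/N^2$ becomes a graded bimodule over $\mathcal A/N\cong F^n$. A direct computation shows that left multiplication by $\bar a=(a_{11},\dots,a_{nn})\in\mathcal A/N$ acts on the distinguished basis $\{e_{k,k+1}+N^2:1\le k\le n-1\}$ of $V$ by the scalar $a_{kk}$ on the $k$-th coordinate; hence the annihilator of the left $\mathcal A/N$-module $V$ is exactly the line spanned by $\bar e_n:=e_{nn}+N$. Since the annihilator of a graded module is a graded ideal, $\bar e_n$ is a homogeneous idempotent, necessarily of degree $1\in G$. Lifting idempotents modulo the nilpotent ideal $N\cap\mathcal A_1$ of $\mathcal A_1$, I obtain a homogeneous idempotent $f\in\mathcal A_1$ with $f+N=\bar e_n$. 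As $f$ is conjugate to $e_{nn}$ by an element of $1+N$, the corner $(1-f)\mathcal A(1-f)$ is a graded subalgebra isomorphic to $UT_{n-1}$; applying the induction hypothesis to it yields homogeneous matrix units $u_{ij}$, $1\le i\le j\le n-1$, with $\sum_i u_{ii}=1-f$.

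It remains to glue $f=:u_{nn}$ on. Since $e_{nn}(UT_n)=Fe_{nn}$, conjugacy gives $f\mathcal A=Ff$, so $f\mathcal A u_{jj}=0$ for $j\le n-1$, while $u_{ii}\mathcal A f$ is one-dimensional for each $i\le n-1$ (the system $\{u_{11},\dots,u_{n-1,n-1},f\}$ being conjugate, via $1+N$, to $\{e_{11},\dots,e_{nn}\}$). Thus, ordering the $n$ pairwise orthogonal homogeneous idempotents as $v_1=u_{11},\dots,v_{n-1}=u_{n-1,n-1},v_n=f$, the Peirce component $v_i\mathcal A v_j$ is one-dimensional for $i\le j$ and zero for $i>j$. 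Choosing a nonzero — hence homogeneous — element $u_{i,i+1}\in v_i\mathcal A v_{i+1}$ for each $i$ and setting $u_{ij}:=u_{i,i+1}\cdots u_{j-1,j}$, one checks (comparing with the standard matrix units, whose successive products are nondegenerate) that $\{u_{ij}\}_{1\le i\le j\le n}$ is a homogeneous set of matrix units that spans $\mathcal A$ for dimension reasons; hence $e_{ij}\mapsto u_{ij}$ defines an algebra isomorphism $UT_n\to\mathcal A$. Putting $g_1:=1$ and $g_i:=\deg(u_{1i})$ for $i\ge2$, the identities $u_{1i}u_{ij}=u_{1j}$ force $\deg(u_{ij})=g_i^{-1}g_j$, so this isomorphism is graded once $UT_n$ carries the elementary $G$-grading attached to $(g_1,\dots,g_n)$. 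As emphasized, the crux is the gradedness of $N$; everything else is bookkeeping with Peirce decompositions and idempotent lifting.
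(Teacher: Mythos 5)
The paper does not prove this statement at all --- it is imported verbatim from \cite{VZa} --- so there is no internal proof to compare against. Your induction via a homogeneous lifting of $e_{nn}$ and a Peirce decomposition is in fact essentially the route taken in that reference, and everything from ``Granting that $N$ is graded'' onward checks out: the annihilator argument showing $\bar e_{nn}$ is homogeneous of trivial degree, the idempotent lifting inside $\mathcal{A}_1$ modulo the nilpotent ideal $N\cap\mathcal{A}_1$, the dimension count forcing $\dim v_i\mathcal{A}v_j=1$ for $i\le j$ and $0$ for $i>j$ (which does implicitly use that matrix units of $UT_{n-1}$ must satisfy $u_{ii}\equiv e_{ii}$ modulo the radical in the standard order, but that is forced by $u_{ii}Cu_{jj}\neq 0$ for $i\le j$), and the computation $\deg(u_{ij})=g_i^{-1}g_j$.

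The genuine gap is exactly where you locate it: the homogeneity of $N=J(UT_n)$, and neither of your two justifications closes it. The fallback argument is invalid as stated: it is true that $Fe_{1n}$ is the unique minimal two-sided ideal (every nonzero ideal contains $e_{1n}$, since $e_{1i}\,a\,e_{jn}=a_{ij}e_{1n}$), but a unique minimal ideal need not be homogeneous. Uniqueness only tells you that every nonzero \emph{graded} ideal contains $Fe_{1n}$, i.e.\ that the unique minimal graded ideal contains this line and could a priori be strictly larger, while the largest graded ideal contained in $Fe_{1n}$, namely $\bigoplus_g\bigl(Fe_{1n}\cap\mathcal{A}_g\bigr)$, could a priori be zero; so the ``bootstrap'' has no base. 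The blanket appeal to ``the radical of a finite-dimensional $G$-graded algebra is graded'' is also not innocent here: the theorem is claimed for arbitrary groups $G$ and arbitrary fields $F$, and in that generality the gradedness of the Jacobson radical is itself a nontrivial theorem rather than a routine remark --- which is why \cite{VZa} devote a separate lemma to proving it directly for $UT_n$. (If the support generates an abelian subgroup the point is easy: $J$ is spanned by commutators, $[e_{ii},e_{ij}]=e_{ij}$, and $[\mathcal{A}_g,\mathcal{A}_h]\subseteq\mathcal{A}_{gh}$; but for nonabelian $G$ a commutator of homogeneous elements splits over two distinct components and this argument breaks down.) Since the entire construction hinges on this one lemma, you need either a genuine proof of it for $UT_n$ or a precise reference valid for arbitrary $G$ and $F$; as written, the heart of the theorem is assumed rather than proved.
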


By Proposition 1.6 from \cite{VKV} we have that an elementary grading on $UT_{n}$ is completely determined by the sequence $(\deg(e_{12}),\deg(e_{23}),\ldots,\deg(e_{n-1,n}))\in G^{n-1}$.

We recall now some recent results about the description of images of multilinear polynomials on the algebra of upper triangular matrices. We start with the definition of the so-called commutator degree of an associative polynomial.

\begin{defi}
Let $f\in F\langle X \rangle$ be a polynomial. We say that $f$ has commutator degree $r$ if 
\[
f\in\langle [x_{1},x_{2}]\cdots [x_{2r-1},x_{2r}]\rangle^{T} \ \mbox{and} \ f\notin\langle [x_{1},x_{2}]\cdots [x_{2r+1},x_{2r+2}]\rangle^{T}.
\]
\end{defi}
In \cite{GMe}, Gargate and de Mello used the above definition to give a complete description of images of multilinear polynomials on $UT_{n}$ over infinite fields. Denoting by $J$ the Jacobson radical of $UT_{n}$ and  $J^{0}=UT_{n}$, they proved the following theorem.

\begin{thm}\label{tGargateThiago}
Let $F$ be an infinite field and let $f\in F\langle X \rangle$ be a multilinear polynomial. Then $Im(f)$ on $UT_{n}$ is $J^{r}$ if and only if $f$ has commutator degree $r$.
\end{thm}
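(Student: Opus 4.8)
Throughout write $c_{k}=[x_{1},x_{2}][x_{3},x_{4}]\cdots[x_{2k-1},x_{2k}]$, so that ``$f$ has commutator degree $r$'' means precisely $f\in\langle c_{r}\rangle^{T}$ and $f\notin\langle c_{r+1}\rangle^{T}$; since the ideals $\langle c_{k}\rangle^{T}$ form a decreasing chain and contain no nonzero polynomial of degree below $2k$, this $r$ is uniquely determined by $f$. The plan is to prove the implication ``commutator degree $r$ $\Rightarrow$ $Im(f)=J^{r}$ on $UT_{n}$'' (with the convention $J^{s}:=0$ for $s\ge n$); the converse then follows from this implication, the uniqueness of $r$, and the fact that $J^{0}\supsetneq J^{1}\supsetneq\cdots\supsetneq J^{n-1}\supsetneq J^{n}=0$. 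I would first rephrase the hypothesis using the well-known description $Id(UT_{s})=\langle c_{s}\rangle^{T}$, valid over any infinite field: thus $f$ has commutator degree $r$ iff $f\in Id(UT_{r})$ and $f\notin Id(UT_{r+1})$. Two extreme cases are disposed of immediately. If $n\le r$, then, as $UT_{n}$ embeds in $UT_{r}$ as the top-left block, $f\in Id(UT_{r})\subseteq Id(UT_{n})$, so $Im(f)=0=J^{r}$. If $r=0$, then $f\notin Id(UT_{1})=Id(F)$ says that the sum $\alpha$ of the coefficients of $f$ is nonzero, and $a=f(\alpha^{-1}a,1,\dots,1)$ for every $a\in UT_{n}$, so $Im(f)=UT_{n}=J^{0}$ (here we use $1\in UT_{n}$). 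Hence from now on assume $1\le r<n$.

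\emph{The inclusion $Im(f)\subseteq J^{r}$.} Any commutator of two upper triangular matrices lies in $J$, so $c_{r}$ evaluated on upper triangular matrices takes values in $J^{r}$; since $J^{r}$ is a two-sided ideal of $UT_{n}$, the same holds for every $T$-consequence $u\,c_{r}(\cdots)\,v$ with $u,v\in UT_{n}$, and hence $f\in\langle c_{r}\rangle^{T}$ forces $Im(f)\subseteq J^{r}$.

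\emph{Every matrix unit of $J^{r}$ lies in $Im(f)$.} Since $f\notin Id(UT_{r+1})$, pick $a_{1},\dots,a_{m}\in UT_{r+1}$ with $f(a_{1},\dots,a_{m})\neq0$; by the previous step this value lies in $J(UT_{r+1})^{r}$, which is the one-dimensional space $Fe_{1,r+1}$, so it equals $\lambda_{0}e_{1,r+1}$ with $\lambda_{0}\neq0$, and replacing $a_{1}$ by $\lambda_{0}^{-1}a_{1}$ (multilinearity of $f$) gives $e_{1,r+1}\in Im(f)$ already on $UT_{r+1}$. Now for any order-preserving injection $\iota\colon\{1,\dots,r+1\}\hookrightarrow\{1,\dots,n\}$ the rule $e_{pq}\mapsto e_{\iota(p)\iota(q)}$ extends to an algebra homomorphism $\psi_{\iota}\colon UT_{r+1}\to UT_{n}$, hence $f(\psi_{\iota}(a_{1}),\dots,\psi_{\iota}(a_{m}))=\psi_{\iota}(e_{1,r+1})=e_{\iota(1)\,\iota(r+1)}$. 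Choosing $\iota$ with $\iota(1)=i$ and $\iota(r+1)=j$ --- possible for every $i<j$ with $j-i\ge r$, there being room for the $r-1$ intermediate values --- shows $e_{ij}\in Im(f)$ for all $i<j$ with $j-i\ge r$; that is, $Im(f)$ contains a spanning set of $J^{r}$.

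\emph{From a spanning set to $J^{r}$: the main obstacle.} What is still missing is that $Im(f)$ absorbs arbitrary $F$-linear combinations of those matrix units, equivalently that $Im(f)$ is a \emph{vector subspace} of $UT_{n}$; a cone containing a spanning set need not be a subspace, so this is not formal. This is the conjecture of \cite{mello_fag}, settled in \cite{LWa} and \cite{GMe}, and it is where essentially all the work of the theorem resides: granting it, $Im(f)$ is a subspace containing $\mathrm{span}\{e_{ij}:j-i\ge r\}$ and contained in $J^{r}$, hence $Im(f)=J^{r}$, which finishes the proof. To establish the subspace property I would follow the cited papers: reduce $f$ modulo $Id(UT_{n})=\langle c_{n}\rangle^{T}$ to a canonical form (a linear combination of products of at most $n-1$ left-normed commutators, each term multiplied by a monomial, using the standard basis of the relatively free algebra of $UT_{n}$, available since $F$ is infinite), and then analyse the evaluations of such canonical polynomials on $UT_{n}$ --- substituting the identity matrix into the monomial parts and using that the image of the single commutator $[x_{1},x_{2}]$ on $UT_{n}$ is already all of $J$ (every strictly upper triangular matrix is a commutator of a diagonal matrix with an upper triangular one, as $F$ is infinite) --- to see that the contributions combine into a homogeneous subspace for the grading $\deg e_{ij}=j-i$. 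Controlling the interaction between the monomial parts and the commutator parts, and ruling out the cross terms that could spoil additivity, is the delicate point; everything before it is elementary.
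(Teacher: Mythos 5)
The paper does not prove this statement: Theorem \ref{tGargateThiago} is imported verbatim from \cite{GMe} (with the companion Theorem \ref{TLuoWang} from \cite{LWa}), so there is no internal argument to measure yours against. Judged on its own, your proposal correctly settles everything except the one step that is the actual content of the theorem. The reduction to the single implication ``commutator degree $r$ implies $Im(f)=J^{r}$'', the identification of commutator degree with $f\in Id(UT_{r})\setminus Id(UT_{r+1})$, the cases $r=0$ and $r\geq n$, the inclusion $Im(f)\subseteq J^{r}$, and the fact that every matrix unit $e_{ij}$ with $j-i\geq r$ is attained (a nonzero value on $UT_{r+1}$ must lie in $Fe_{1,r+1}$, then push it around with the order-preserving non-unital embeddings $UT_{r+1}\hookrightarrow UT_{n}$) are all correct and elementary.

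The genuine gap is the passage you yourself flag as ``from a spanning set to $J^{r}$''. You need that an arbitrary matrix $\sum_{j-i\geq r}a_{ij}e_{ij}$ is a \emph{single} value of $f$, and nothing you have written produces such an evaluation: a subset of $J^{r}$ containing all the relevant matrix units and closed under scaling need not be all of $J^{r}$. Your sketch (normal form modulo $Id(UT_{n})$, generic diagonal substitutions, the fact that $Im([x_{1},x_{2}])=J$) points in the right direction, but the delicate part --- showing that after substituting generic diagonal matrices the coefficients appearing at the positions $(i,j)$ with $j-i\geq r$ are given by nonzero commutative polynomials that can be made simultaneously nonvanishing, which is exactly where the infinitude of $F$ (or the bound $|F|\geq n(n-1)/2$ in \cite{LWa}) enters --- is precisely what you defer to the cited papers. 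As written, the proposal establishes $\{e_{ij}: j-i\geq r\}\subseteq Im(f)\subseteq J^{r}$ and then appeals to the theorem it is meant to prove; it is an accurate anatomy of the problem rather than a complete proof.
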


One of the main steps in the proof of Theorem \ref{tGargateThiago} was the characterization the polynomials of commutator degree $r$ in terms of their coefficients. An instance of such characterization has already been known, see \cite{GMe} Lemma 3.3(2). 

\begin{lem}[\cite{GMe}]\label{sumofcoeffi}
Let $F$ be an arbitrary field and let $f\in F\langle X \rangle$ be a multilinear polynomial. Then $f\in \langle [x_{1},x_{2}]\rangle^{T}$ if and only if the sum of its coefficients is zero.
\end{lem}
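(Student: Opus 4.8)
The plan is to use the standard description of the $T$-ideal $\langle[x_{1},x_{2}]\rangle^{T}$ through the abelianisation map. Write the multilinear polynomial as $f=\sum_{\sigma\in S_{m}}\alpha_{\sigma}x_{\sigma(1)}\cdots x_{\sigma(m)}$, where $m=\deg f$.

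For the direction ``$\Rightarrow$'', I would consider the algebra homomorphism $\pi\colon F\langle X\rangle\to F[t_{1},t_{2},\dots]$ onto the polynomial ring in commuting variables determined by $x_{i}\mapsto t_{i}$. Every element of $\langle[x_{1},x_{2}]\rangle^{T}$ is a finite sum of terms $u\,[h_{1},h_{2}]\,v$ with $u,v\in F\langle X\rangle$ and $h_{1},h_{2}\in F\langle X\rangle$ arbitrary; since $\pi(h_{1})$ and $\pi(h_{2})$ commute in $F[t_{1},t_{2},\dots]$, the map $\pi$ annihilates each such term, hence $\pi(f)=0$. On the other hand $\pi(f)=\bigl(\sum_{\sigma\in S_{m}}\alpha_{\sigma}\bigr)\,t_{1}\cdots t_{m}$, and therefore $\sum_{\sigma\in S_{m}}\alpha_{\sigma}=0$.

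For ``$\Leftarrow$'', the key observation is that the factors of a monomial may be reordered freely modulo $\langle[x_{1},x_{2}]\rangle^{T}$: for any monomials $u,v$ and any $a,b\in F\langle X\rangle$ one has $u\,ab\,v-u\,ba\,v=u\,[a,b]\,v\in\langle[x_{1},x_{2}]\rangle^{T}$. Since $S_{m}$ is generated by the adjacent transpositions, an induction on the length of $\sigma$ yields $x_{\sigma(1)}\cdots x_{\sigma(m)}\equiv x_{1}x_{2}\cdots x_{m}\pmod{\langle[x_{1},x_{2}]\rangle^{T}}$ for every $\sigma\in S_{m}$. Summing over $\sigma$ gives $f\equiv\bigl(\sum_{\sigma\in S_{m}}\alpha_{\sigma}\bigr)x_{1}x_{2}\cdots x_{m}\pmod{\langle[x_{1},x_{2}]\rangle^{T}}$, and when the sum of the coefficients vanishes the right-hand side is $0$, whence $f\in\langle[x_{1},x_{2}]\rangle^{T}$.

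This lemma is elementary and there is no real obstacle; the only point worth pinning down at the outset is whether $F\langle X\rangle$ is taken with or without a unit, and then to use the matching description of the elements of a $T$-ideal consistently in both directions — in the non-unital case one merely permits the monomials $u,v$ above to be empty, and the two displays go through verbatim.
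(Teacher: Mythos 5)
Your proof is correct. Note that the paper itself does not prove this lemma --- it is quoted from \cite{GMe} (Lemma 3.3(2) there) --- but your argument is the standard one and is complete: the abelianisation map $x_i\mapsto t_i$ kills the two-sided ideal generated by all $[h_1,h_2]$ (which is exactly $\langle[x_1,x_2]\rangle^{T}$, since that ideal is already stable under endomorphisms) and sends $f$ to $\bigl(\sum_\sigma\alpha_\sigma\bigr)t_1\cdots t_m$, while the reordering of adjacent factors modulo the $T$-ideal gives the converse. Your closing remark about the unital versus non-unital convention is the right point to pin down, and is handled correctly.
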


It is worth mentioning that the above theorem has been extended for a larger class of fields by Luo and Wang in \cite{LWa}.

\begin{thm}[\cite{LWa}]\label{TLuoWang}
Let $n\geq 2 $ be an integer, let $F$ be a field with at least $n(n-1)/2$ elements and let $f\in F\langle X \rangle$ be a multilinear polynomial. If $f$ has commutator degree $r$, then $Im(f)$ on $UT_{n}$ is $J^{r}$.
\end{thm}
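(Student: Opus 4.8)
The plan is to establish $Im(f)\subseteq J^{r}$ and $J^{r}\subseteq Im(f)$ separately. The first inclusion is elementary and holds over every field: since $f$ has commutator degree $r$ we have $f\in\langle[x_{1},x_{2}]\cdots[x_{2r-1},x_{2r}]\rangle^{T}$, so $f$ is a linear combination of products $u\,[v_{1},v_{2}]\cdots[v_{2r-1},v_{2r}]\,u'$ with $u,u',v_{i}\in F\langle X\rangle$; under any evaluation in $UT_{n}$ each $[v_{2i-1},v_{2i}]$ is a commutator of upper triangular matrices, hence lies in $J$ (the diagonal of $AB-BA$ is zero when $A,B$ are upper triangular), a product of $r$ such factors lies in $J^{r}$, and since $J^{r}$ is a two-sided ideal the outer factors $u,u'$ keep the value in $J^{r}$. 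The reverse inclusion carries all the content and all the dependence on $|F|$; the natural route is to revisit the proof of Theorem~\ref{tGargateThiago} and localise the single place where an infinite field was used, replacing it by a finite interpolation.

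For $J^{r}\subseteq Im(f)$ I would first extract from the hypothesis an evaluation detecting depth exactly $r$. Recall the combinatorial description of commutator degree underlying Theorem~\ref{tGargateThiago} and generalising Lemma~\ref{sumofcoeffi}: writing $f=\sum_{\sigma\in S_{m}}a_{\sigma}x_{\sigma(1)}\cdots x_{\sigma(m)}$, the condition that $f$ have commutator degree $\geq s$ is equivalent to the vanishing of an explicit family of $\Z$-linear combinations of the $a_{\sigma}$, one for each chainable configuration of matrix units of $UT_{s}$, and these conditions are independent of $F$. Since $f$ has commutator degree exactly $r$, this description supplies matrix units $e_{p_{1},q_{1}},\dots,e_{p_{m},q_{m}}$ of $UT_{r+1}$, involving rows $1$ and $r+1$, with $f(e_{p_{1},q_{1}},\dots,e_{p_{m},q_{m}})=c\,e_{1,r+1}$, $c\neq0$; as the combinatorics is unaffected by a uniform shift of the indices, $F\,e_{i,i+r}\subseteq Im(f)$ for $i=1,\dots,n-r$, so at least $Im(f)\not\subseteq J^{r+1}$.

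Promoting this to $J^{r}\subseteq Im(f)$ as a set is the core of the argument; I would follow the strategy of \cite{GMe} and induct on $n$. By the inductive hypothesis $Im(f)$ already contains $J^{r}(UT_{n-1})$ realised inside each of the block subalgebras on $\{1,\dots,n-1\}$ and on $\{2,\dots,n\}$, so only the elements of $J^{r}$ that involve both the first row and the last column, and arbitrary linear combinations inside $J^{r}$, remain to be reached. For these one deforms an already-attained value by a scalar parameter $t$: conjugation by $\mathrm{diag}(1,t,\dots,t^{n-1})$ multiplies $e_{pq}$ by $t^{q-p}$ and preserves $Im(f)$, since conjugation by any invertible upper triangular matrix is an automorphism of $UT_{n}$; combined with unitriangular conjugations and the elementary fact that two values of $f$ differing in one argument may be added, this reduces the problem to solving, by interpolation in $t$, for each basis matrix $e_{ij}$ with $j-i\geq r$ and then for an arbitrary element of $J^{r}$. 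The number of interpolation nodes required is governed by the number of off-diagonal positions of $UT_{n}$, namely $n(n-1)/2$, and this is exactly the place where $|F|\geq n(n-1)/2$ is invoked; over an infinite field one simply notes that a nonzero one-variable polynomial has a non-root, whereas here one must bound the degrees of the auxiliary polynomials and check that the relevant coefficient sums and Vandermonde-type determinants do not degenerate in small positive characteristic. That verification is the main obstacle; the rest — the first inclusion, the extraction of the depth-$r$ witness, and the shift and conjugation symmetries — is formal. Together the two inclusions give $Im(f)=J^{r}$.
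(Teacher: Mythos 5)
This statement is quoted from \cite{LWa} (Luo--Wang) and the paper gives no proof of it, so there is no internal argument to compare yours against; I can only assess your proposal on its own terms. Your first inclusion $Im(f)\subseteq J^{r}$ is correct and complete: every substitution instance of $[x_{1},x_{2}]\cdots[x_{2r-1},x_{2r}]$ lands in $J^{r}$ because each evaluated commutator of upper triangular matrices has zero diagonal, and $J^{r}$ is a two-sided ideal, so the outer monomial factors are harmless. This direction needs no hypothesis on $|F|$.

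The reverse inclusion, however, is where the entire content of the theorem lives, and what you have written there is a plan rather than a proof. Three specific gaps: (i) the ``combinatorial description of commutator degree'' producing an evaluation on matrix units of $UT_{r+1}$ with value $c\,e_{1,r+1}$, $c\neq 0$, is asserted, not established --- this is itself one of the main technical steps of \cite{GMe}, and its behaviour in small positive characteristic (where integer linear combinations of coefficients may vanish unexpectedly) is exactly what is at issue; (ii) in the inductive step you cannot simply combine the images of the two corner block subalgebras, because $Im(f)$ is not known to be a vector space --- multilinearity only lets you add two values of $f$ that differ in a \emph{single} argument, whereas values realised inside the $\{1,\dots,n-1\}$ block and inside the $\{2,\dots,n\}$ block generally differ in every argument, so reaching an arbitrary element of $J^{r}$ (not just individual matrix units $e_{ij}$) requires the careful simultaneous generic evaluation that Luo and Wang actually carry out; and (iii) you explicitly defer the degree bounds and the non-degeneracy of the interpolation/Vandermonde data as ``the main obstacle.'' That obstacle is precisely where the hypothesis $|F|\geq n(n-1)/2$ enters and is the substance of the theorem, so the proposal as it stands does not prove the statement.
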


In the next corollary we denote by $UT_{n}^{(-)}$ the Lie algebra defined on $UT_{n}$ by means of the Lie bracket $[a,b]=ab-ba$.

\begin{cor}
Let $F$ be a field with at least $n(n-1)/2$ elements and let $f\in L(X)$ be a multilinear Lie polynomial. Then $Im(f)$ on $UT_{n}^{(-)}$ is $J^{r}$, for some $0\leq r \leq n$.
\end{cor}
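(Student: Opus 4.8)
The plan is to deduce the corollary from the associative classification, Theorem~\ref{TLuoWang}, by passing through the standard embedding of the free Lie algebra $L(X)$ into the free associative algebra $F\langle X\rangle$. Identify $L(X)$ with the Lie subalgebra of $F\langle X\rangle$ generated by $X$ under the commutator $[a,b]=ab-ba$. Under this identification a multilinear Lie polynomial $f\in L(X)$ becomes a multilinear element of $F\langle X\rangle$, which I will keep denoting by $f$. Since the bracket on $UT_{n}^{(-)}$ is exactly the associative commutator of $UT_{n}$, for every $a_{1},\dots,a_{m}\in UT_{n}$ the value of $f$ at $(a_{1},\dots,a_{m})$ computed in the Lie algebra $UT_{n}^{(-)}$ coincides with its value computed as an associative polynomial in $UT_{n}$; hence $Im(f)$ on $UT_{n}^{(-)}$ equals $Im(f)$ on $UT_{n}$.

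Next I would check that a nonzero multilinear $f$ has a well-defined commutator degree $r\ge 0$. If $\deg f=m$, then every nonzero element of $\langle [x_{1},x_{2}]\cdots[x_{2r-1},x_{2r}]\rangle^{T}$ has all of its monomials of degree $\ge 2r$: substituting into a product of commutators a polynomial with nonzero constant term kills every term except the one in which each variable is replaced by its constant-free part, and that part has degree $\ge 2r$. Therefore $f$ can lie in $\langle [x_{1},x_{2}]\cdots[x_{2r-1},x_{2r}]\rangle^{T}$ only when $2r\le m$, so there is a largest such $r$, which is the commutator degree of $f$. As the hypothesis $|F|\ge n(n-1)/2$ is precisely the one required by Theorem~\ref{TLuoWang}, that theorem gives $Im(f)=J^{r}$ on $UT_{n}$, and hence on $UT_{n}^{(-)}$.

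It remains to place $r$ in the range $[0,n]$. By construction $r\ge 0$; moreover, if $m=\deg f\ge 2$ then applying the augmentation homomorphism $F\langle X\rangle\to F$, $x_{i}\mapsto 1$, annihilates every commutator and hence $f$, so the sum of the coefficients of $f$ is zero and Lemma~\ref{sumofcoeffi} yields $f\in\langle [x_{1},x_{2}]\rangle^{T}$, i.e.\ $r\ge 1$ (while if $m=1$ then $f=\alpha x_{1}$ with $\alpha\neq 0$ and $Im(f)=UT_{n}=J^{0}$). Finally, $J$ is the algebra of strictly upper triangular matrices, which is nilpotent of index $n$, so $J^{r}=0=J^{n}$ for all $r\ge n$; replacing $r$ by $\min(r,n)$ gives $Im(f)=J^{r'}$ with $0\le r'\le n$. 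There is essentially no obstacle in this argument: the corollary follows at once from Theorem~\ref{TLuoWang} and Lemma~\ref{sumofcoeffi}, and the only points deserving a line of justification are the identification of the Lie and associative evaluations in the first step and the bookkeeping that confines the commutator degree to $[0,n]$, the decisive fact there being that $J$ is nilpotent of index $n$ so that large commutator degrees collapse to $J^{n}=0$.
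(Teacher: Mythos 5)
Your proposal is correct and takes essentially the same route as the paper: identify $L(X)$ with the Lie subalgebra of $F\langle X\rangle^{(-)}$ generated by $X$ via the Witt theorem, observe that the Lie and associative evaluations on $UT_{n}$ coincide so the two images agree, and then invoke Theorem \ref{TLuoWang}. The additional bookkeeping you supply --- that a nonzero multilinear polynomial has a well-defined commutator degree and that $J^{r}=J^{n}=0$ for $r\geq n$ --- is correct and merely makes explicit what the paper leaves implicit.
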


\begin{proof}
We use the Poincaré-Birkhoff-Witt Theorem (and more precisely the Witt Theorem) to consider the free Lie algebra $L(X)$ as the subalgebra of $F\langle X \rangle^{(-)}$ generated by the set $X$. Since $F\langle X\rangle$ is the universal enveloping algebra of $L(X)$, given a multilinear Lie polynomial $f\in L(X)$ there exists an associative polynomial $\tilde{f}\in F\langle X \rangle$ such that $Im(f)$ on $UT_{n}^{(-)}$ is equal to $Im(\tilde{f})$ on $UT_{n}$. Now it is enough to apply Theorem \ref{TLuoWang}.
\end{proof}

Let $UT_{n}(d_{1},\ldots,d_{k})$ be the upper block-triangular matrix algebra, that is, the subalgebra of $M_{n}(F)$ consisting of all block-triangular matrices of the form
\[
\begin{pmatrix}
     A_{1}& & *  \\
     & \ddots & \\
     0 & & A_{k} 
\end{pmatrix}
\]
where $n=d_{1}+\cdots+d_{k}$ and  $A_{i}$ is a $d_{i}\times d_{i}$ matrix. We will denote by $T$ the subalgebra of $UT_{n}(d_{1},\ldots,d_{k})$ which consists of only triangular blocks of sizes $d_{i}$ on the main diagonal and zero elsewhere. That is, 
\[
T=\begin{pmatrix}
UT_{d_{1}} & &0 \\
 & \ddots & \\
 0& & UT_{d_{k}}
\end{pmatrix}
\]
As a consequence of the above theorem we obtain the following lemma.

\begin{lem}\label{lblock}
Let $F$ be a field with at least $n(n-1)/2$ elements and let $f\in F\langle X \rangle$ be a multilinear polynomial of commutator degree $r$. Then the image $Im(f)$ on $T$ is $J^{r}$, where $J=Jac(T)$ is the Jacobson radical of $T$.
\end{lem}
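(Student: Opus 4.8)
The plan is to reduce the statement to Theorem \ref{TLuoWang} applied to each diagonal block separately, using that $T$ is nothing but the direct product of the algebras $UT_{d_{1}},\dots,UT_{d_{k}}$ as an abstract $F$-algebra, since by construction $T$ has no off-diagonal entries. The governing principle is that a multilinear polynomial (or, more generally, any polynomial with zero constant term) evaluates coordinatewise on a finite direct product of algebras.

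The first step is the elementary observation that if $\mathcal{A}=\mathcal{A}_{1}\times\cdots\times\mathcal{A}_{k}$ and $f\in F\langle X\rangle$ is multilinear, then $Im(f)$ on $\mathcal{A}$ is the Cartesian product of the sets $Im(f)$ on $\mathcal{A}_{i}$. One inclusion holds because addition, scalar multiplication and multiplication in $\mathcal{A}$ are all performed coordinatewise, so substituting $a_{j}=(a_{j}^{(1)},\dots,a_{j}^{(k)})$ gives $f(a_{1},\dots,a_{m})=(f(a_{1}^{(1)},\dots,a_{m}^{(1)}),\dots,f(a_{1}^{(k)},\dots,a_{m}^{(k)}))$; the reverse inclusion holds because, given $b_{i}\in Im(f)$ on $\mathcal{A}_{i}$ realised by a tuple $(a_{1}^{(i)},\dots,a_{m}^{(i)})$, assembling these tuples coordinate by coordinate produces a tuple in $\mathcal{A}$ whose value under $f$ is $(b_{1},\dots,b_{k})$.

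The second step identifies the radical side: $J=Jac(T)=Jac(UT_{d_{1}})\oplus\cdots\oplus Jac(UT_{d_{k}})$ (the strictly upper triangular part of each block), hence $J^{r}=Jac(UT_{d_{1}})^{r}\oplus\cdots\oplus Jac(UT_{d_{k}})^{r}$, the powers being computed inside each block. Since the commutator degree of $f$ is an intrinsic property of $f$ as an element of the free algebra, independent of the algebra on which $f$ is evaluated, and since each $d_{i}\le n$ guarantees that $F$ has at least $d_{i}(d_{i}-1)/2$ elements, Theorem \ref{TLuoWang} applies to each factor and yields $Im(f)$ on $UT_{d_{i}}$ equal to $Jac(UT_{d_{i}})^{r}$. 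Combining this with the first step, $Im(f)$ on $T$ equals $\prod_{i}Jac(UT_{d_{i}})^{r}=J^{r}$.

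I do not expect a genuine obstacle; this is a short deduction from Theorem \ref{TLuoWang}. The two points worth a line of care are the coordinatewise evaluation lemma for direct products and the behaviour of small blocks: if $r\ge d_{i}$ then $Jac(UT_{d_{i}})^{r}=0$, which is consistent because $UT_{d_{i}}$ satisfies the identity $[x_{1},x_{2}]\cdots[x_{2d_{i}-1},x_{2d_{i}}]\equiv 0$ (a product of $d_{i}$ strictly upper triangular matrices vanishes), so a polynomial of commutator degree $r\ge d_{i}$ lies in $Id(UT_{d_{i}})$ and its image on that block is $0=Jac(UT_{d_{i}})^{r}$.
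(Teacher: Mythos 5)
Your proof is correct and follows the same route as the paper: identify $T$ with the direct product $UT_{d_{1}}\times\cdots\times UT_{d_{k}}$, note that multilinear polynomials evaluate coordinatewise and that $Jac(T)$ is the product of the $Jac(UT_{d_{i}})$, and apply Theorem \ref{TLuoWang} blockwise. Your extra remark about blocks with $d_{i}\le r$ (and implicitly the $d_{i}=1$ blocks) is a welcome clarification of a degenerate case the paper passes over silently.
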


\begin{proof}
We note that $T\cong UT_{d_{1}}\times\cdots\times UT_{d_{k}}$. Hence, by \cite[Proposition 5.60]{Bre},
\[
J=\begin{pmatrix}
    J_{d_{1}} & & \\
     & \ddots & \\
     & & J_{d_{k}}
\end{pmatrix}
\]
where $J_{d_{i}}=Jac(UT_{d_{i}})$. Therefore by Theorem \ref{TLuoWang}, we have
\[
f(T)=\begin{pmatrix}
    f(UT_{d_{1}}) & &  \\
     &\ddots &\\
     & & f(UT_{d_{k}})
\end{pmatrix} =\begin{pmatrix}
    J_{d_{1}}^{r} & &  \\
     &\ddots &\\
     & & J_{d_{k}}^{r}
\end{pmatrix}=J^{r}.
\qedhere
\]
\end{proof}

Throughout this paper we use the letters $w_{i}$ and $w_{i}^{(j)}$ to denote commuting variables. We recall the following well known result about commutative polynomials.

\begin{lem}\label{lcomutpoly}
Let $F$ be an infinite field and let $f_{1}(w_{1},\dots,w_{m})$, \dots,  $f_{n}(w_{1},\dots,w_{m})$ be commutative polynomials. Then there exist $a_{1}$, \dots,  $a_{m}\in F$ such that 
\[
f_{1}(a_{1},\dots,a_{m})\neq0,\quad \dots, \quad f_{n}(a_{1},\dots,a_{m})\neq0.
\]
\end{lem}

A similar result also holds for finite fields, as long as some boundedness  on the degrees of the variables is given (see \cite[Proposition 4.2.3]{Dre}).

\begin{lem}\label{lcomutpolyfinite}
Let $F$ be a finite field with $n$ elements and let $f=f(w_{1},\dots,w_{m})$ be a nonzero polynomial. If $\deg_{w_{i}}(f)\leq n-1$ for every $i=1$, \dots, $n$, then there exist $a_{1}$, \dots, $a_{m}\in F$ such that $f(a_{1},\dots,a_{m})\neq0$.
\end{lem}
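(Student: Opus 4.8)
The plan is to argue by induction on the number of variables $m$. The base case $m=1$ rests on the elementary observation that a nonzero polynomial $h(w_1)\in F[w_1]$ with $\deg h\le n-1$ cannot vanish at every point of $F$: if it did, it would have $n$ distinct roots, contradicting the bound on its degree. Hence in one variable there is always an $a_1\in F$ with $h(a_1)\neq 0$, and the degree hypothesis is exactly what makes this work.

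For the inductive step, assuming the statement for polynomials in $m-1$ variables, I would regard $f$ as a polynomial in the last variable with coefficients in $F[w_1,\dots,w_{m-1}]$, writing
\[
f(w_1,\dots,w_m)=\sum_{j=0}^{n-1} g_j(w_1,\dots,w_{m-1})\,w_m^{\,j},
\]
where the sum stops at $j=n-1$ precisely because $\deg_{w_m}(f)\le n-1$, and each $g_j$ inherits the bound $\deg_{w_i}(g_j)\le n-1$ for $i=1,\dots,m-1$. Since $f\neq 0$, some coefficient $g_{j_0}$ is a nonzero polynomial in $m-1$ variables, so by the induction hypothesis there are $a_1,\dots,a_{m-1}\in F$ with $g_{j_0}(a_1,\dots,a_{m-1})\neq 0$. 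Specializing these values, $h(w_m):=f(a_1,\dots,a_{m-1},w_m)$ is a univariate polynomial of degree at most $n-1$ whose coefficient of $w_m^{\,j_0}$ is nonzero, hence $h$ is a nonzero polynomial of degree $\le n-1$; the base case then yields $a_m\in F$ with $h(a_m)=f(a_1,\dots,a_m)\neq 0$, completing the induction.

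There is no real obstacle in this argument; the only point needing a small amount of care is to check that the degree bound $\le n-1$ survives both the passage to the coefficients $g_j$ and the univariate specialization $h$, which is immediate from the way the decomposition is set up. It is worth noting that the hypothesis cannot be weakened: the polynomial $\prod_{a\in F}(w_1-a)$ has degree exactly $n$ in $w_1$ and vanishes identically on $F^m$, so some bound on the partial degrees is genuinely required, in contrast with the infinite-field situation of Lemma~\ref{lcomutpoly}.
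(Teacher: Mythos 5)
Your proof is correct: the induction on the number of variables, with the base case resting on a degree-$\le n-1$ univariate polynomial having at most $n-1$ roots, is exactly the standard argument. The paper does not prove this lemma itself but cites \cite[Proposition 4.2.3]{Dre}, where essentially the same induction is carried out, so your approach matches the intended one.
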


\begin{cor}\label{ccomutpolyfinite}
Let $F$ be a finite field with $n$ elements and let $f_{1}(w_{1}\dots,w_{m})$, \dots, $f_{n-1}(w_{1},\dots,w_{m})$ be nonzero polynomials in commuting variables. If $\deg_{w_{i}}(f_{j})\leq 1$ for all $i$ and $j$, then there exist $a_{1}$,  \dots, $a_{m}\in F$ such that
\[
f_{1}(a_{1},\dots,a_{m})\neq0,\quad \dots,\quad f_{n-1}(a_{1},\dots,a_{m})\neq0.
\]
\end{cor}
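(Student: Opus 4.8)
The plan is to reduce the simultaneous non-vanishing statement to the single-polynomial statement of Lemma \ref{lcomutpolyfinite} by multiplying the $f_j$ together. First I would set
\[
f = f_{1}f_{2}\cdots f_{n-1} \in F[w_{1},\dots,w_{m}].
\]
Since $F$ is a field, the polynomial ring $F[w_{1},\dots,w_{m}]$ is an integral domain, so $f$ is a nonzero polynomial (a product of nonzero elements of an integral domain is nonzero).

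Next I would bound the partial degrees of $f$. For each $i$ we have $\deg_{w_{i}}(f) \le \sum_{j=1}^{n-1}\deg_{w_{i}}(f_{j})$, and by hypothesis each summand is at most $1$; hence $\deg_{w_{i}}(f) \le n-1$ for every $i$. This is precisely the hypothesis needed to apply Lemma \ref{lcomutpolyfinite} to $f$ over the field $F$ with $n$ elements, which yields $a_{1},\dots,a_{m}\in F$ with $f(a_{1},\dots,a_{m})\neq 0$. Finally, since $f(a_{1},\dots,a_{m}) = f_{1}(a_{1},\dots,a_{m})\cdots f_{n-1}(a_{1},\dots,a_{m})$ is a nonzero element of the field $F$, each factor $f_{j}(a_{1},\dots,a_{m})$ must be nonzero, which is the desired conclusion.

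There is no real obstacle here; the only point to watch is that the number of polynomials is capped at $n-1$ exactly so that the product has partial degrees at most $n-1$, matching the bound in Lemma \ref{lcomutpolyfinite}. If one had $n$ or more polynomials of partial degree $1$, the product could have partial degree $n$ and the lemma would no longer apply (indeed the statement would fail, e.g. for the $n$ polynomials $w_{1}-a$, $a\in F$, on one variable).
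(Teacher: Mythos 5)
Your proof is correct, and it is the standard argument the paper clearly intends (the corollary is stated without proof there): multiply the $n-1$ polynomials, note the product is nonzero with partial degrees at most $n-1$, and apply Lemma \ref{lcomutpolyfinite}. Nothing further is needed.
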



\section{Graded central polynomials for $UT_{n}$}
\label{sect3}

Our goal in this section is to prove the non existence of graded central polynomials for the graded algebra of upper triangular matrices with entries in an arbitrary field. It is well known that the algebra of upper block triangular matrices has no central polynomials, see \cite[Lemma 1]{gz_ijm}.

We will denote by $Z(\mathcal{A})$ the centre of the algebra $\mathcal{A}$.  

\begin{defi}
Let $f\in F\langle X \rangle^{gr}$. We say that $f$ is a graded central polynomial for the algebra $\mathcal{A}$ if $Im(f)\subset Z(\mathcal{A})$ and $f\notin Id^{gr}(\mathcal{A})$.
\end{defi}

We recall the following fact from \cite[Lemma 1.4]{VKV}.

\begin{lem}
Let $UT_{n}$ be endowed with some elementary grading. Then the subspace of all diagonal matrices is homogeneous of neutral degree.
\end{lem}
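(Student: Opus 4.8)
**Proof proposal for the final lemma (diagonal matrices are homogeneous of neutral degree in an elementary grading on $UT_n$).**

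The plan is to work directly from the defining data of an elementary grading. By definition, there is an $n$-tuple $(g_1,\dots,g_n)\in G^n$ with $\deg(e_{ij})=g_i^{-1}g_j$ for every $i\le j$. First I would record the two structural observations we need: the diagonal matrix units are exactly the $e_{ii}$, and for each of them $\deg(e_{ii})=g_i^{-1}g_i=1_G$, the neutral element of $G$. Hence every $e_{ii}$ lies in the neutral component $\mathcal{A}_{1}$, and since $\mathcal{A}_1$ is a subspace, so does every diagonal matrix $\sum_i \alpha_i e_{ii}$. This already shows the space $D$ of diagonal matrices is contained in $\mathcal{A}_1$.

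Next I would argue that $D$ is a homogeneous subspace, i.e. $D=\bigoplus_{g\in G}(D\cap \mathcal{A}_g)$. Since $D\subset \mathcal{A}_1$ we get $D\cap\mathcal{A}_g=\{0\}$ for $g\ne 1_G$ and $D\cap \mathcal{A}_1=D$, so the decomposition $D=\bigoplus_g(D\cap\mathcal{A}_g)$ holds trivially; every element of $D$ is homogeneous of degree $1_G$. That is precisely the assertion ``the subspace of all diagonal matrices is homogeneous of neutral degree.''

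I do not anticipate a real obstacle here; the statement is essentially an unwinding of the definition of an elementary grading together with the fact that $g_i^{-1}g_i=1_G$. The only point requiring a word of care is to make sure the grading really is given in the stated form — but the excerpt has already reduced to this case: Theorem~\ref{gradingsupper} lets us assume the grading on $UT_n$ is elementary, and the definition of elementary grading recalled just before the lemma provides the tuple $(g_1,\dots,g_n)$. So the whole proof is: $e_{ii}\in\mathcal{A}_{\deg e_{ii}}=\mathcal{A}_{1_G}$ for all $i$, hence $D=\mathrm{span}\{e_{ii}\}\subseteq \mathcal{A}_{1_G}$, and a subspace contained in a single homogeneous component is automatically homogeneous.
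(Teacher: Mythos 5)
Your proof is correct: since the paper merely recalls this lemma from \cite[Lemma 1.4]{VKV} without reproducing a proof, there is nothing to compare against, but your argument is exactly the expected one — $\deg(e_{ii})=g_i^{-1}g_i=1_G$ forces the span of the $e_{ii}$ into $\mathcal{A}_{1}$, and a subspace contained in a single homogeneous component is trivially homogeneous. (If one only assumed the weaker form of the definition, that the $e_{ij}$ are homogeneous of unspecified degrees, one would note $e_{ii}^2=e_{ii}$ forces $\deg(e_{ii})=1_G$; but the paper's stated equivalence with the tuple $(g_1,\dots,g_n)$ makes your shortcut legitimate.)
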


\begin{thm}
Let $UT_{n}=\mathcal{A}=\bigoplus_{g\in G}\mathcal{A}_{g}$ be a $G$-grading on the algebra of upper triangular matrices over an arbitrary field. If $n>1$ then there exist no graded central polynomials for $\mathcal{A}$.
\end{thm}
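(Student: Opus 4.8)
The plan is to reduce the problem to ordinary (non-graded) central polynomials via a base-field extension, using the fact that over a suitable extension the grading becomes split enough to be handled by known results, or alternatively to argue directly with the elementary grading structure.

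First I would invoke Theorem~\ref{gradingsupper} to assume the grading is elementary, determined by a tuple $(g_1,\dots,g_n)$ with $\deg(e_{ij})=g_i^{-1}g_j$. Suppose $f$ is a graded central polynomial for $\mathcal{A}=UT_n$; then $Im(f)\subset Z(UT_n)=F\cdot I$, the scalar matrices, and $f\notin Id^{gr}(UT_n)$. The key observation is that $f$ must involve only neutral variables, or at least its nonzero evaluations must land on the diagonal. Indeed, if a homogeneous variable $x_i^{(g)}$ of degree $g\ne 1$ appears, one can specialize it cleverly; but more robustly, I would look at the ``diagonal projection'' argument: the subspace $D$ of diagonal matrices is homogeneous of neutral degree (the recalled Lemma~1.4 from \cite{VKV}), and $UT_n/J$ is the diagonal part where $J=Jac(UT_n)$. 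Passing to the quotient, $\bar f$ is a graded polynomial on the commutative algebra $F^n$ with the induced grading; since $Z(UT_n)=F\cdot I \subset D$, and since a scalar matrix is a very specific diagonal element, I can push the analysis to components.

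The main step is then this: since $f$ is not a graded identity, there is a nonzero evaluation $f(a_1,\dots,a_m)=\lambda I$ for some $\lambda\in F$. If $\lambda\neq 0$, I want to derive a contradiction by exhibiting an evaluation whose $(1,1)$ entry differs from its $(n,n)$ entry, or simply by producing a non-scalar value. The cleanest route: reduce to the case where $f$ is multilinear (by the linearization process, Proposition~\ref{multiidentity}; note one must be slightly careful over finite fields, but since we only need ``not a graded identity'' to be preserved under multilinearization up to passing to a homogeneous component, this is fine). For a multilinear graded $f$, each variable has a fixed homogeneous degree; group the variables by degree. If all variables are neutral, then by item (2) of Proposition~\ref{basicprop}, if the sum of coefficients is nonzero then $Im(f)=\mathcal{A}_1=D$, which is not contained in $F\cdot I$ for $n>1$ — contradiction; and if the sum of coefficients is zero, then by Lemma~\ref{sumofcoeffi} $f$ has commutator degree $\ge 1$, so by Theorem~\ref{TLuoWang}-type reasoning (or directly: $f$ takes values in $J$, which meets $Z(UT_n)$ only in $0$), $f$ would be a graded identity — contradiction. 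If some variable is non-neutral of degree $g\ne 1$, then in any monomial of $f$ that variable contributes off-diagonal matrix units only when $g_i^{-1}g_j = g$ with $i<j$ (since $e_{ij}$ with $i>j$ is not in $UT_n$), so every monomial evaluates into $J$, hence $Im(f)\subset J$, hence $f$ is an identity — contradiction again. This last point is the crux: a non-neutral homogeneous element of an elementary-graded $UT_n$ is strictly upper triangular, so it is nilpotent and lies in $J$.

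The main obstacle I anticipate is the finite-field bookkeeping in the linearization step: over a small finite field one cannot freely pass from an arbitrary graded polynomial to a multilinear one preserving ``not an identity'', so I would instead argue without full multilinearization. Concretely, I would show directly that for the elementary grading, every homogeneous component $\mathcal{A}_g$ with $g\notin supp_{diag}$ consists of strictly upper triangular matrices (those of neutral degree restricted to off-diagonal positions are handled because $g_i=g_j$ forces $i=j$ only when the diagonal idempotents have distinct ``colours'', which need not hold). The genuinely careful case is when $g_i=g_j$ for some $i\neq j$, i.e., the neutral component $\mathcal{A}_1$ is larger than $D$; then $\mathcal{A}_1$ contains some $e_{ij}$, $i<j$, which is nilpotent, and one must check that a central-polynomial evaluation using such variables still cannot escape $J$ unless it is zero. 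I would handle this by projecting onto $UT_n/J\cong F^n$: the image of $f$ there is the image of the induced graded polynomial on the commutative graded algebra $F^n$; a scalar matrix maps to the constant tuple $(\lambda,\dots,\lambda)$, and I would show that if $\bar f$ is not identically zero on $F^n$ then its image is not contained in the diagonal line $\{(\lambda,\dots,\lambda)\}$ for $n>1$ — using Lemma~\ref{lcomutpoly} or Corollary~\ref{ccomutpolyfinite} to separate the first and last coordinates — forcing $\bar f\equiv 0$, i.e. $Im(f)\subset J$, and then concluding $Im(f)\cap Z(UT_n)=0$ so $f\in Id^{gr}(\mathcal{A})$, the desired contradiction.
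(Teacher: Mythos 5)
Your fallback argument is, in substance, the paper's own proof: after reducing to an elementary grading, every non-neutral homogeneous element and every off-diagonal part of a neutral element lies in $J$, so modulo $J$ only the purely neutral part $f_1$ of $f$ survives, evaluated on $UT_n/J\cong F^n$; since the $n$ diagonal coordinates are evaluated independently and $f_1(0,\dots,0)=0$, containment of the image in $F\cdot I$ forces $f_1$ to vanish as a function on $F$, hence $Im(f)=\{0\}$. Two caveats. First, the multilinearization route should simply be dropped rather than hedged: it is problematic over \emph{any} field of positive characteristic (not only small finite fields), and Proposition \ref{multiidentity} is a statement about identities, not about preserving ``$Im(f)\subset Z(\mathcal{A})$ and $f\notin Id^{gr}(\mathcal{A})$'' under linearization; the paper avoids multilinearization entirely and works with an arbitrary polynomial with zero constant term. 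Second, Lemma \ref{lcomutpoly} and Corollary \ref{ccomutpolyfinite} are neither applicable here (they require an infinite field or degree bounds that an arbitrary $f_1$ need not satisfy) nor needed to ``separate the first and last coordinates'': because the coordinates of $F^n$ are evaluated independently, any nonzero value $f_1(\lambda_1,\dots,\lambda_m)=\mu$ placed in the first coordinate together with $0$ in the last already yields a non-scalar diagonal matrix --- this is exactly the paper's evaluation at $\lambda_i e_{11}$.
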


\begin{proof}
By Theorem \ref{gradingsupper} we have that $\mathcal{A}$ is graded isomorphic to some elementary grading on $UT_{n}$. Hence we may reduce our problem to elementary gradings. Now we assume that $f\in F\langle X \rangle^{gr}$ is a polynomial with zero constant term, such that $Im(f)$ on $\mathcal{A}$ is contained in $F=Z(\mathcal{A})$. We write $f$ as $f=f_{1}+f_{2}$ where $f_{1}$ contains neutral variables only and $f_{2}$ has at least one non neutral variable in each of its monomials. Consider $a_{1}$, \dots, $a_{m}\in\mathcal{A}_{1}$, and $b_{1}$, \dots, $b_{l}$ non neutral variables (of homogeneous degree $\ne 1$) that occur in $f$. Hence $f(a_{1},\dots,a_{m},b_{1},\dots,b_{l})=f_{1}(\overline{a}_{1},\dots,\overline{a}_{m})+j_{1}+j_{2}$ where $j_{1}$, $j_{2}\in J$, the Jacobson radical of $\mathcal{A}$, and $\overline{a}_{i}$ is the diagonal part of $a_{i}$. Since $Im(f)\subset F$, then $j_{1}+j_{2}=0$ and hence $Im(f)=Im(f_{1})$, where the image of $f_{1}$ is taken on diagonal matrices only. Now, note that if $\lambda_{1}$, \dots, $\lambda_{m}\in F$ are arbitrary, then 
\[
f_{1}(\lambda_{1}e_{11},\dots,\lambda_{m}e_{11})=f_{1}(\lambda_{1},\dots,\lambda_{m})e_{11}.
\]
Since $Im(f_{1})\subset F$, we must have $f_{1}(\lambda_{1},\dots,\lambda_{m})=0$. Hence, for diagonal matrices $D_{i}=\displaystyle\sum_{k=1}^{n}\lambda_{k}^{(i)}e_{kk}$ we have 
\[
f_{1}(D_{1},\dots,D_{m})=\sum_{k=1}^{n}f_{1}(\lambda_{1}^{(k)},\dots,\lambda_{k}^{(m)})e_{kk}=0,
\]
and thus $Im(f)=\{0\}$. We conclude the non existence of graded central polynomials for $UT_{n}$.
\end{proof}

\section{Certain $\Z_{q}$-gradings on $UT_{n}$}
\label{sect4}

Throughout this section we denote $UT_{n}=\mathcal{A}$, endowed with the elementary $\mathbb{Z}_{q}$-grading given by the following sequence in $\mathbb{Z}_{q}^{n}$
\[
(\overline{0},\overline{1},\dots,\overline{q-2},\underbrace{\overline{q-1},\overline{q-1},\dots,\overline{q-1}}_{\text{$n-q+1$ times}}). \]
Given $q\leq n$ an integer, we study the images of multilinear graded polynomials on $\mathcal{A}$.

One can see that for $q=n$ we have the natural $\Z_{n}-$grading on $UT_{n}$ given by $\deg e_{ij}=j-i\pmod{n}$ for every $i\le j$.

We note that the neutral component of $UT_{n}$ is given by a block triangular matrix with $q-1$ triangular blocks of size one each and a triangular block of size $n-q+1$ in the bottom right corner
\[
\mathcal{A}_{0}=\begin{pmatrix}
    * &   &   & 0  \\
      & \ddots & & \\
      & & * & \\
      0& & & UT_{n-q+1}
\end{pmatrix}
\]
For $l\in\{1,\dots,q-1\}$ we have that the homogeneous component of degree $\overline{l}$ is given by
\[
\mathcal{A}_{\overline{l}}=span\{e_{i,i+l}, e_{q-l,j} \mid i=1,\dots,q-l, j=q+1,\dots,n\}.
\]
For $1\leq r \leq n-q$ we also define the following homogeneous subspaces of $A_{\overline{l}}$
\[
\mathcal{B}_{\overline{l},r}=span\{e_{q-l,j} \mid j=q+r,\ldots,n\}.
\]
An immediate computation shows that the following are graded identities for $\mathcal{A}$
\begin{align}
&[y_{1},y_{2}]z\equiv 0 \label{identity1} \\
&z_{1}z_{2}\equiv 0  \label{identity2} \\
&{[y_{1},y_{2}]}\cdots [y_{2(n-q+1)-1},y_{2(n-q+1)}]\equiv 0 \label{identity3}
\end{align}
where the variables $y_i$ are neutral ones, $z$, $z_{1}$, $z_{2}$ are non neutral variables and $\deg(z_{1})+\deg(z_{2})=\overline{0}$. A complete description of the graded polynomial identities for elementary gradings on $UT_{n}$ was given in \cite{VKV} for infinite fields and in \cite{GRi} for finite fields.

We state several lemmas concerning the description of some graded polynomials on $\mathcal{A}$. In the upcoming lemmas, unless otherwise stated, we assume that the field $F$ has at least $n(n-1)/2$ elements and $f\in F\langle X \rangle^{gr}$ is a multilinear polynomial.

\begin{lem}\label{l1Zq}
If $f=f(y_{1},\dots,y_{m})$, then $Im(f)$ on $\mathcal{A}$ is a homogeneous vector subspace.
\end{lem}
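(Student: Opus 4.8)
The plan is to reduce the statement to the known commutator-degree classification on the neutral component, which is a direct product of upper triangular blocks. Since $f$ is a multilinear polynomial in neutral variables only, $Im(f)$ on $\mathcal{A}$ is obtained by evaluating $f$ on elements of $\mathcal{A}_{0}$. Recall that $\mathcal{A}_{0}$ is precisely the block-triangular algebra $T = UT_1 \times \cdots \times UT_1 \times UT_{n-q+1}$ (with $q-1$ copies of $UT_1$), so evaluating $f$ on $\mathcal{A}_0$ is the same as computing the image of $f$ on $T$.

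First I would consider the associative polynomial $f$ simply as an element of $F\langle X\rangle$ (forgetting the grading, since all variables are neutral) and invoke the classification of polynomials by commutator degree: either $f$ has some commutator degree $r$ with $0 \le r \le n-q+1$, or $f$ is an identity for $T$ (which, since the largest block has size $n-q+1$, happens exactly when $f$ would have ``commutator degree $> n-q$'', i.e. $f \in \langle [x_1,x_2]\cdots[x_{2(n-q+1)-1}, x_{2(n-q+1)}]\rangle^{T}$; this is identity \eqref{identity3} restricted to neutral variables). In the latter case $Im(f) = \{0\}$, which is a homogeneous subspace. In the former case, I would apply Lemma \ref{lblock} to $T$: since $F$ has at least $n(n-1)/2 \ge (n-q+1)(n-q)/2$ elements, the hypothesis of Lemma \ref{lblock} is met, and we get $Im(f)$ on $T$ equals $J^{r}$, where $J = Jac(T)$.

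It then remains to observe that $J^{r}$ is a homogeneous subspace of $\mathcal{A}$. This is where a small amount of care is needed. We have $J = Jac(\mathcal{A}_0) = Jac(UT_{n-q+1})$ sitting in the bottom-right block, so $J^r$ is spanned by the matrix units $e_{ij}$ with $q \le i < j \le n$ and $j - i \ge r$. Each such $e_{ij}$ is homogeneous (the grading is elementary), so $J^r$ is a sum of homogeneous components and hence homogeneous. Concretely, $J^r$ is contained in $\mathcal{A}_0$ and is itself a direct sum over the degrees $\overline{0}, \overline{1}, \dots$ of its intersections with the homogeneous components; actually since all these $e_{ij}$ lie strictly inside the last block, $J^r \subseteq \mathcal{A}_{\overline 0}$ and $J^r = J^r \cap \mathcal{A}_{\overline 0}$, which trivially exhibits it as homogeneous. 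Alternatively, one can simply cite Proposition \ref{basicprop}(4), since $supp(\mathcal{A}) \subseteq \Z_q$ is abelian and $f$ is multilinear — this instantly gives homogeneity of $Im(f)$ without any computation.

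I do not expect a serious obstacle here: the only real content is matching $\mathcal{A}_0$ with the algebra $T$ of Lemma \ref{lblock} and checking the cardinality bound, both of which are routine. The mildly delicate point, if one wants an explicit description rather than just ``homogeneous subspace,'' is to record exactly which $J^r$ arises — equivalently, to note that the attainable values of $r$ range over $0 \le r \le n-q$ (with $r$ beyond that forcing the zero image), but for the present statement it suffices that each $J^r$ is a homogeneous subspace, so this refinement can be deferred.
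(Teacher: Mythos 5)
Your proposal is correct and is essentially the paper's proof: the paper disposes of this case with the single line ``apply Lemma \ref{lblock},'' and you have simply spelled out the identification of $\mathcal{A}_{0}$ with the block-diagonal algebra $T$, the cardinality check, and the (trivial) homogeneity of $J^{r}\subseteq\mathcal{A}_{\overline{0}}$. No gaps.
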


\begin{proof}
It is enough to apply Lemma \ref{lblock}.
\end{proof}

In the next two lemmas we will assume that $f=f(z_{1},\dots,z_{l},y_{l+1},\dots,y_{m})$ where $\deg(z_{i})=\overline{1}$, $1\leq i \leq l$. It is obvious that in this case one must have $Im(f)$ on $\mathcal{A}$ as a subset of $\mathcal{A}_{\overline{l}}$. Modulo the identity $(1)$ we rewrite the polynomial $f$ as
\[
f=\sum_{\bm{i_{1}},\dots, \bm{i_{l}}}y_{\bm{i_{1}}}z_{1}y_{\bm{i_{2}}}z_{2}\cdots y_{\bm{i_{l}}}z_{l}g_{\bm{i_{1}},\dots,\bm{i_{l}}}+h
\]
where $y_{\bm{i_{j}}}=y_{i_{j_{1}}}\cdots y_{i_{j_{k_{j}}}}$ is such that $i_{j_{1}}<\cdots <i_{j_{k_{j}}}$. Moreover $g_{\bm{i_{1}},\dots,\bm{i_{l}}}$ is the polynomial obtained by permuting the neutral variables whose indices are different from either of $\bm{i_{1}}$, \dots, $\bm{i_{l}}$, and forming a linear combination of such monomials. Furthermore, $h$ is the sum of  polynomial that differ from the first summand of $f$ by nontrivial permutations of the odd variables.

Among all polynomials $g_{\bm{i_{1}},\dots,\bm{i_{l}}}$ (including those in $h$), we choose one of least commutator degree, say $g$, of commutator degree $r$. Up to permuting the odd variables, we can assume that the polynomial $g$ occurs in the first summand of $f$. 

Hence, in case $1\leq r \leq n-q$, we can improve the inclusion $Im(f)\subset \mathcal{A}_{\overline{l}}$ to $Im(f)\subset \mathcal{B}_{\overline{l},r}$. Our goal is to prove that $Im(f)=\mathcal{A}_{\overline{l}}$ in case $r=0$ and $Im(f)=\mathcal{B}_{\overline{l},r}$ otherwise.

\begin{lem}\label{l2Zq}
If $1\leq r \leq n-q$, then $Im(f)=\mathcal{B}_{\overline{l},r}$.
\end{lem}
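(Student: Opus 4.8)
The plan is to prove the two inclusions $Im(f)\subset\mathcal{B}_{\overline{l},r}$ and $\mathcal{B}_{\overline{l},r}\subset Im(f)$ separately, and then invoke Proposition \ref{basicprop}(4) to conclude that $Im(f)$, being a homogeneous subset squeezed between these two homogeneous subspaces, equals $\mathcal{B}_{\overline{l},r}$. Actually, since the containment $Im(f)\subset\mathcal{B}_{\overline{l},r}$ has already been explained in the paragraph preceding the statement (the polynomial $g$ of least commutator degree $r$ forces any evaluation of $f$ to land in $\mathcal{B}_{\overline{l},r}$, because the surviving blocks come from commutators of neutral diagonal blocks, and a commutator degree $r$ term in a triangular block of size $n-q+1$ pushes the nonzero entries at least $r$ columns to the right), the real work is the reverse inclusion: producing, for each basis matrix unit $e_{q-l,j}$ with $q+r\le j\le n$, a graded evaluation of $f$ equal to it, and then using linearity/scalar closure.

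First I would reduce to the "model" summand: write $f=y_{\bm{i_1}}z_1 y_{\bm{i_2}}z_2\cdots y_{\bm{i_l}}z_l\,g+(\text{rest})$ where $g$ has commutator degree $r$, and arrange the odd-variable permutation so this is literally the leading term. The strategy is to substitute the odd variables $z_k$ by carefully chosen matrix units lying in $\mathcal{A}_{\overline{1}}$ so that all the monomials in $h$ (which involve nontrivial permutations of the $z_k$'s) vanish by identity \eqref{identity2} or simply because a wrong product of matrix units is zero, while the leading term survives. Concretely, one wants to route a path $1\to 2\to\cdots$ through the first $q-1$ "singleton" rows using the $z_k$'s and the neutral $y$'s so that only the chosen ordering $z_1,\dots,z_l$ gives a nonzero product; choosing $z_k\mapsto e_{t_k,t_k+1}$ for an increasing sequence of indices does this. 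This collapses $Im(f)$ down to evaluations of the form $e_{1,?}\cdot(\text{neutral stuff})\cdot g(\text{neutral diagonal blocks})$, i.e. it reduces the problem to: the image of $g$, evaluated on the bottom $UT_{n-q+1}$-block together with the connecting row, hits everything in $\mathcal{B}_{\overline{l},r}$.

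Next I would bring in Lemma \ref{lblock} (or directly Theorem \ref{TLuoWang}): since $g$ has commutator degree $r$, its image on $UT_{n-q+1}$ is $J^{r}$, the $r$-th power of the radical, i.e. the span of $e_{ab}$ with $b-a\ge r$ inside that block. Feeding such an evaluation of $g$ into the reduced expression, and letting the free neutral scalar parameters (the diagonal entries $\lambda$, which appear polynomially of degree $\le 1$ in each monomial after the reduction) vary, I get that $Im(f)$ contains a nonzero element of $\mathcal{B}_{\overline{l},r}$; more care with the parameters — choosing them via Lemma \ref{lcomutpoly} in the infinite case or Corollary \ref{ccomutpolyfinite} in the finite case, exactly as the $n(n-1)/2$ hypothesis is set up for — lets me isolate each individual matrix unit $e_{q-l,j}$, $q+r\le j\le n$, or at least a spanning set. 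Finally, because $f$ is multilinear, rescaling one odd argument shows $\lambda\,Im(f)\subset Im(f)$, and combined with homogeneity (Proposition \ref{basicprop}(4)) and the fact that $Im(f)$ contains a full spanning family, I conclude $Im(f)=\mathcal{B}_{\overline{l},r}$.

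The main obstacle I anticipate is the bookkeeping in the reduction step: ensuring that the substitution for the odd variables simultaneously kills every monomial of $h$ (all the "wrong" orderings of the $z_k$) and every cross-term coming from the other $g_{\bm{i_1},\dots,\bm{i_l}}$ of commutator degree $>r$ without also killing the leading term, and then checking that the residual commutative-polynomial expression in the diagonal parameters is genuinely nonzero (not an identity) so that Lemma \ref{lcomutpoly}/Corollary \ref{ccomutpolyfinite} applies — this is precisely where the hypothesis that $g$ has \emph{least} commutator degree $r$ among all the $g_{\bm{i_1},\dots,\bm{i_l}}$ is used, since terms of strictly larger commutator degree contribute only to $\mathcal{B}_{\overline{l},r'}$ with $r'>r$ and hence cannot cancel the $e_{q-l,j}$, $j=q+r$, entry coming from $g$.
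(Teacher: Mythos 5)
Your overall strategy coincides with the paper's: route the odd variables through the singleton rows $q-l,\dots,q-1$ by matrix units $e_{t,t+1}$ and evaluate the $y_{\bm{i_j}}$ at the idempotents $e_{q-l+j-1,q-l+j-1}$ so that only the leading summand $\overline{g}=y_{\bm{i_1}}z_1\cdots y_{\bm{i_l}}z_l g$ survives, then invoke Theorem \ref{TLuoWang} for the image of $g$ on the bottom $UT_{n-q+1}$ block. (Two small remarks: the appeal to Lemma \ref{lcomutpoly}/Corollary \ref{ccomutpolyfinite} is not needed here, since fixed idempotents suffice for the neutral variables outside $g$ --- that machinery is only required in the $r=0$ case, Lemma \ref{l3Zq}; and the terms of strictly larger commutator degree are killed outright by the routing, not merely pushed into $\mathcal{B}_{\overline{l},r'}$ with $r'>r$, so no cancellation argument is needed.)

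The genuine gap is in your endgame. From ``$Im(f)$ contains a spanning family of $\mathcal{B}_{\overline{l},r}$'', ``$\lambda\,Im(f)\subset Im(f)$'' and homogeneity (Proposition \ref{basicprop}(4)) you cannot conclude $Im(f)=\mathcal{B}_{\overline{l},r}$: homogeneity here only means $Im(f)$ sits inside a single homogeneous component, and a scalar-closed subset containing a basis need not be the whole span (e.g.\ the union of the coordinate axes in $F^2$). Additive closure of the image is precisely what the lemma is asserting, so it cannot be assumed. The repair is to realize an \emph{arbitrary} element of $\mathcal{B}_{\overline{l},r}$ in a single evaluation. The paper does this by keeping the last odd variable generic, $z_l=\sum_{k=q}^{n}w_k e_{q-1,k}$, and evaluating $g$ at the fixed ``shift'' $e_{q,q+r}+e_{q+1,q+r+1}+\cdots+e_{n-r,n}\in J^r$, so that $\overline{g}$ becomes $w_q e_{q-l,q+r}+\cdots+w_{n-r}e_{q-l,n}$ and any target $B$ is hit by choosing $w_j=b_j$. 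Alternatively, within your setup one can fix $z_l=e_{q-1,q}$ and use that $Im(g)=J^r$ is itself a full subspace of the bottom block, so that $e_{q-l,q}\cdot J^r$ already sweeps out all of $\mathcal{B}_{\overline{l},r}$ as the evaluation of $g$ varies. Either fix is routine, but as written the concluding step does not follow.
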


\begin{proof}
Let $\overline{g}=y_{\bm{i_{1}}}z_{1}y_{\bm{i_{2}}}z_{2}\cdots y_{\bm{i_{l}}}z_{l}g$ be a nonzero summand of $f$ written as above, where the commutator degree of $g$ is $r$. We consider the following evaluations: the variables in $y_{\bm{i_{1}}}$ by $e_{q-l,q-l}$, the ones in $y_{\bm{i_{2}}}$ by $e_{q-l+1,q-l+1}$,\dots, and all variables in $y_{\bm{i_{l}}}$ by $e_{q-1,q-1}$. We also put $z_{1}=e_{q-l,q-l+1}$, $z_{2}=e_{q-l+1,q-l+2}$, \dots, $z_{l-1}=e_{q-2,q-1}$, and $z_{l}=\sum_{k=q}^{n}w_{k}e_{q-1,k}$. Since $g$ is of commutator degree $r$, Theorem \ref{TLuoWang} enables us to evaluate the even variables in $g$ by matrices from 
\[
\begin{pmatrix}
0&0\\
0&UT_{n-q+1}
\end{pmatrix}
\]
in order to obtain the matrix $e_{q,q+r}+e_{q+1,q+r+1}+\cdots+ e_{n-r,n}$.

Note that the evaluations that we have considered allow us to reduce the study of the image of $f$ to the polynomial $\overline{g}$. Under these evaluations we have 
\begin{align*}
\overline{g}&=(w_{q}e_{q-l,q}+w_{q+1}e_{q-l,q+1}+\cdots+w_{n}e_{q-l,n})(e_{q,q+r}+e_{q+1,q+r+1}+\cdots+e_{n-r,n})\\
&=w_{q}e_{q-l,q+r}+w_{q+1}e_{q-l,q+r+1}+\cdots+w_{n-r}e_{q-l,n}.
\end{align*}

Taking a matrix $B\in \mathcal{B}_{\overline{l},r}$, say $B=b_{q}e_{q-l,q+r}+\cdots+b_{n-r}e_{q-l,n}$ we can easily realize $B$ as image of $\overline{g}$ by choosing $w_{j}=b_{j}$, $j=q$, \dots, $n-r$. 

Hence $f(\mathcal{A})=\mathcal{B}_{\overline{l},r}$.
\end{proof}

\begin{lem}\label{l3Zq}
If $F$ is a field with at least $n$ elements and $r=0$, then  $Im(f)=\mathcal{A}_{\overline{l}}$.
\end{lem}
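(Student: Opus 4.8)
The plan is to mimic the evaluation strategy of Lemma~\ref{l2Zq} but now using the fact that $r=0$ forces the chosen polynomial $g$ to have nonzero sum of coefficients (by Lemma~\ref{sumofcoeffi}), so that $g$ acts, under suitable diagonal-type substitutions, as the identity on a one-dimensional block. First I would recall the reduction made before the statement of Lemma~\ref{l2Zq}: modulo identity~\eqref{identity1} we write $f=\sum y_{\bm{i_1}}z_1\cdots y_{\bm{i_l}}z_l\,g_{\bm{i_1},\dots,\bm{i_l}}+h$, and we pick a summand $\overline{g}=y_{\bm{i_1}}z_1\cdots y_{\bm{i_l}}z_l\,g$ with $g$ of commutator degree $0$; up to renaming the odd variables we may assume it sits in the first sum. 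The key point is that since $g\notin\langle[x_1,x_2]\rangle^T$, Lemma~\ref{sumofcoeffi} gives that the sum $\alpha$ of the coefficients of $g$ is nonzero.

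Next I would set up the evaluation. As in Lemma~\ref{l2Zq}, put the variables of $y_{\bm{i_j}}$ equal to $e_{q-l+j-1,q-l+j-1}$ for $j=1,\dots,l$, put $z_1=e_{q-l,q-l+1},\dots,z_{l-1}=e_{q-2,q-1}$, and put $z_l=\sum_{k=q}^{n}w_k e_{q-1,k}+\sum_{i=1}^{q-l}w_i' e_{i,i+l}$; actually, to reach all of $\mathcal{A}_{\overline{l}}$ I need two separate families of evaluations, one producing the matrix units $e_{q-l,j}$ ($j=q,\dots,n$) in $\mathcal{B}_{\overline{l},0}$ exactly as in Lemma~\ref{l2Zq} but with $r=0$, and one producing each $e_{i,i+l}$ for $i=1,\dots,q-l$. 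For the first family, evaluate the even variables of $g$ inside $\left(\begin{smallmatrix}0&0\\0&UT_{n-q+1}\end{smallmatrix}\right)$ so that $g\mapsto \alpha(e_{q,q}+e_{q+1,q+1}+\cdots+e_{n,n})$ (possible since $g$ has commutator degree $0$: here I invoke that on $UT_{n-q+1}$ the image of $g$ is all of $J^{0}=UT_{n-q+1}$ by Theorem~\ref{TLuoWang}, or more directly that a scalar diagonal lies in the image whenever the coefficient sum is nonzero, using Proposition~\ref{basicprop}(2) applied to the block), and conclude $\overline{g}\mapsto \alpha(w_q e_{q-l,q}+\cdots+w_n e_{q-l,n})$, which sweeps out $\mathcal{B}_{\overline{l},0}=\mathrm{span}\{e_{q-l,q},\dots,e_{q-l,n}\}$. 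For the second family, to hit $e_{i_0,i_0+l}$ for a fixed $i_0\le q-l$, I instead feed the indices through the top-left scalar blocks: set all variables of $y_{\bm{i_1}}$ to $e_{i_0,i_0}$, of $y_{\bm{i_2}}$ to $e_{i_0+1,i_0+1}$, \dots, of $y_{\bm{i_l}}$ to $e_{i_0+l-1,i_0+l-1}$, set $z_j=e_{i_0+j-1,i_0+j}$ for $j=1,\dots,l$, and evaluate the even variables of $g$ by $e_{i_0+l,i_0+l}$ times scalars so that $g\mapsto \alpha e_{i_0+l,i_0+l}$ (again $g$ has zero commutator degree, so a scalar matrix is in its image); then $\overline{g}\mapsto \alpha e_{i_0,i_0+l}$.

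Then I would argue that under each such evaluation the remaining summands of $f$ vanish. This is exactly the mechanism already used implicitly in Lemma~\ref{l2Zq}: identity~\eqref{identity2} kills any monomial in which two odd variables meet, and the placement of the $z_i$ along a single ``staircase'' of consecutive matrix units $e_{k,k+1}$ forces any nontrivial permutation of the $z$'s (the terms collected in $h$) to produce a zero product; likewise the specific diagonal idempotents assigned to the $y_{\bm{i_j}}$ block kill the other $g_{\bm{i_1},\dots,\bm{i_l}}$ with different index patterns, since a neutral variable sent to $e_{k,k}$ annihilates any monomial that does not have it adjacent in the right slot. Hence under these substitutions $f$ reduces to $\overline{g}$, and the two families of evaluations together show $\mathcal{A}_{\overline{l}}=\mathrm{span}\{e_{i,i+l},e_{q-l,j}\mid i\le q-l,\ j>q\}\subseteq Im(f)$. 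Combined with the always-valid inclusion $Im(f)\subseteq \mathcal{A}_{\overline{l}}$, this gives $Im(f)=\mathcal{A}_{\overline{l}}$. The scalars $w_k$ range over $F$, and by Proposition~\ref{basicprop}(4) (abelian support) $Im(f)$ is a homogeneous set, so no extra linear-combination argument is needed beyond surjectivity onto the two spanning families; the hypothesis $|F|\ge n$ is what is needed to run Theorem~\ref{TLuoWang} / the finite-field commutative-polynomial lemmas when choosing the even-variable values so that all auxiliary nonvanishing conditions hold simultaneously.

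\textbf{Main obstacle.} The delicate point is bookkeeping: verifying that for each target matrix unit one can simultaneously (i) route the correct diagonal idempotents through the $y_{\bm{i_j}}$ so that $\overline{g}$ survives, (ii) guarantee all the other $g_{\bm{i_1},\dots,\bm{i_l}}$ and all $h$-terms die, and (iii) still have enough freedom in the even variables of $g$ to realize a nonzero scalar multiple of the required idempotent using Theorem~\ref{TLuoWang} with only $n(n-1)/2$ (resp.\ $n$) field elements available. I expect (ii)--(iii) to interact: one must check that the substitutions forcing the other terms to vanish do not over-constrain the block on which $g$ is evaluated, which is why the argument is organized so that $g$ always lives in an isolated diagonal block $e_{i_0+l,i_0+l}$ or the bottom $UT_{n-q+1}$ block, decoupled from the staircase carrying the odd variables.
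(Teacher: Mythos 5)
There is a genuine gap at the final step. Your two families of evaluations show only that $Im(f)$ contains the subspace $span\{e_{q-l,q},\dots,e_{q-l,n}\}$ (where the $w_k$ are simultaneous free parameters) and, separately, each ray $F e_{i_0,i_0+l}$ for $i_0<q-l$, each obtained from a \emph{different} staircase of point evaluations. That is a spanning family of $\mathcal{A}_{\overline{l}}$, not the subspace $\mathcal{A}_{\overline{l}}$ itself: a general target $B=\sum_{i}b_ie_{i,i+l}+\sum_{j>q}b_je_{q-l,j}$ with several nonzero $b_i$ must be realized by a \emph{single} substitution, and your point evaluations (with $z_1,\dots,z_{l-1}$ and the $y_{\bm{i_j}}$ frozen at single matrix units) can never produce a matrix supported on more than one of the positions $(i,i+l)$ with $i<q-l$. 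Your appeal to Proposition \ref{basicprop}(4) does not repair this: that proposition only says $Im(f)$ is a homogeneous \emph{subset}, not that it is closed under addition — indeed, whether the image is a subspace is precisely what the lemma is proving, so "surjectivity onto a spanning family" is not enough. (A smaller point: your tentative combined substitution $z_l=\sum_k w_ke_{q-1,k}+\sum_i w_i'e_{i,i+l}$ is not a legal evaluation, since $e_{i,i+l}\notin\mathcal{A}_{\overline{1}}$ for $l>1$.)

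This is exactly why the paper's proof is organized differently. It first passes to the homogeneous subalgebra $\mathcal{S}=\mathcal{D}\oplus\bigoplus_{l}\mathcal{A}_{\overline{l}}$, on which $[y_1,y_2]\equiv0$ holds, so that every tail polynomial collapses to (sum of coefficients)$\times$(ordered monomial); it then evaluates \emph{all} odd variables at generic elements of $\mathcal{A}_{\overline{1}}$ and all even variables at generic diagonal matrices. Under such generic substitutions the terms you hoped to kill (the other partitions $g_{\bm{j_1},\dots,\bm{j_l}}$ and the permuted terms in $h$) do \emph{not} vanish; the paper controls them through the decomposition $f=\sum_{j}\sum_{\sigma}f_{j,\sigma}$, shows the relevant coefficient polynomials $p_k$ are nonzero, uses Corollary \ref{ccomutpolyfinite} (this is where $|F|\ge n$ enters, there being at most $n-1$ polynomials $p_k$) to make them nonvanishing simultaneously, and finally realizes an arbitrary $B\in\mathcal{A}_{\overline{l}}$ by solving a triangular linear system in the entries of $z_l$. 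To fix your argument you would need to replace the point evaluations by such a generic evaluation and carry out this bookkeeping; as written, the proof establishes only $\mathcal{B}_{\overline{l},1}\cup\bigcup_{i}Fe_{i,i+l}\subset Im(f)$ together with $span\{e_{q-l,q},\ldots,e_{q-l,n}\}\subset Im(f)$, which does not yield $Im(f)=\mathcal{A}_{\overline{l}}$.
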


\begin{proof}
Denoting by $\mathcal{D}$ the homogeneous subspace of diagonal matrices of $\mathcal{A}$, we consider the following homogeneous subalgebra of $\mathcal{A}$:
\[
\mathcal{S}=\mathcal{D}\oplus \bigoplus_{1\leq l\leq q-1}\mathcal{A}_{\overline{l}}.
\]
We will show that $Im(f)$ on $\mathcal{S}$ is $\mathcal{A}_{\overline{l}}$ which is enough to conclude the lemma. Note that $\mathcal{S}$ still satisfies the identity (\ref{identity2}) and it also satisfies $[y_{1},y_{2}]\equiv0$.

By the identity $[y_{1},y_{2}]\equiv0$, we may write the polynomial $\overline{g}$ as 
\[
\beta y_{\bm{i_{1}}}z_{1}y_{\bm{i_{2}}}z_{2}\cdots y_{\bm{i_{l}}}z_{l}y_{\bm{i_{l+1}}}
\]
where $\beta$ is the sum of all coefficients of the polynomial $g$ and $y_{\bm{i_{l+1}}}$ is the product of the variables of $g$ in increasing order of the indices. We claim that $\beta\neq 0$. To this end it is enough to check that $\overline{g}$ is not a consequence of a commutator. In fact,  if $\overline{g}$ is a consequence of a commutator it must be in the variables of $g$ since the remaining variables of $\overline{g}$ are given in a fixed order in all of its monomials. However $g$ has commutator degree zero, which proves our claim.  

Now we write $f=f(z_{1},\dots,z_{l},y_{l+1},\dots,y_{m})$ as 
\[
f=\sum_{j=1}^{l}f_{j}
\]
where $f_{j}$ is the sum of all monomials of $f$ such that the variable $z_{l}$ is in the $j$-th position in relation to the odd variables.

For each $j=1$, \dots, $l$, we write
\[
f_{j}=\sum_{\sigma\in S_{l}^{(j)}}f_{j,\sigma}
\]
where $S_{l}^{(j)}=\{\sigma\in S_{l}|\sigma(l)=j\}$, $f_{j,\sigma}$ is the sum of all monomials of $f_{j}$ where the order of the odd variables is given by the permutation $\sigma$. 

Taking $\displaystyle z_{i}=\sum_{k=1}^{q-1}w_{k}^{(i)}e_{k,k+1}+w_{q}^{(i)}e_{q-1,q+1}+\cdots+w_{n-1}^{(i)}e_{q-1,n}$ and $\displaystyle y_{j}=\sum_{k=1}^{n}w_{k}^{(j)}e_{kk}$ we have
\begin{align*}
&f_{l,id}(z_{1},\dots,z_{l},y_{l+1},\dots,y_{m})=\sum_{k=1}^{q-l}p_{k}w_{k}^{(1)}w_{k+1}^{(2)}\cdots w_{k+l-1}^{(l)}e_{k,k+l}\\
&+ p_{q-l+1}w_{q-l}^{(1)}\cdots w_{q-2}^{(l-1)}w_{q}^{(l)}e_{q-l,q+1}+\cdots+ p_{n-l}w_{q-l}^{(1)}\cdots w_{q-2}^{(l-1)}w_{n-1}^{(l)}e_{q-l,n}
\end{align*}
where $p_{k}$, $k=1$, \dots, $n-l$, are polynomials in the variables $w^{(l+1)}$, \dots, $w^{(m)}$.
We note that all polynomials $p_{k}$, $k=1$, \dots, $n-l$, are nonzero ones. Indeed, we just have to check that different monomials in $f_{l,id}$ give  different monomials in $p_{k}$. To this end, note that if $m_{1}$ and $m_{2}$ are different monomials in $f_{l,id}$, then there exists some even variable $y_{j}$ such that the quantity of preceding odd variables in relation to $y_{j}$ is distinct in $m_{1}$ and $m_{2}$. This gives us variables $w^{(j)}$ with different lower indices in the two monomials in $p_{k}$ given by $m_{1}$ and $m_{2}$, which proves our claim. Moreover, we note that every variable in each monomial of the polynomial $p_{k}$ appears exactly once.  

Since we have at most $n-1$ polynomials $p_{k}$, by Corollary \ref{ccomutpolyfinite} there exist evaluations of the even variables $y_{j}$ by diagonal matrices $D_{j}$ such that $p_{k}$ take nonzero values simultaneously for all $k$. In case $F$ is infinite the same conclusion holds by applying Lemma \ref{lcomutpoly}.
Hence 
\begin{align*}
f_{l}&=\sum_{k=1}^{q-l}\bigg(\sum_{\sigma\in S_{l}^{(l)}}\alpha_{\sigma}w_{k}^{(\sigma(1))}\cdots w_{k+l-2}^{(\sigma(l-1))}\bigg)w_{k+l-1}^{(l)}e_{k,k+l}\\ 
&+ \bigg(\sum_{\sigma\in S_{l}^{(l)}}\alpha_{\sigma}w_{q-l}^{(\sigma(1))}\cdots w_{q-2}^{(\sigma(l-1))}\bigg)w_{q}^{(l)}e_{q-l,q+1}\\
&+\cdots+ \bigg(\sum_{\sigma\in S_{l}^{(l)}}\alpha_{\sigma}w_{q-l}^{(\sigma(1))}\cdots w_{q-2}^{(\sigma(l-1))}\bigg)w_{n-1}^{(l)}e_{q-l,n}
\end{align*}
with $\alpha_{id}\neq0$. So the polynomials inside the brackets above are nonzero ones and each of their monomials have variables of degree one. Applying Corollary \ref{ccomutpolyfinite} once again we may evaluate the variables $z_{1}$, \dots, $z_{l-1}$ by matrices in $C_{1}$, \dots, $C_{l-1}\in A_{\overline{1}}$ such that all these polynomials take nonzero values on $F$ (in case $F$ is infinite we apply Lemma \ref{lcomutpoly}). Denote by $\alpha_{l,k}\in F\setminus\{0\}$, $k =1$, \dots, $q-l$, the values of the polynomials inside the brackets after such evaluations. 

Therefore
\begin{align*}
&f(C_{1},\dots,C_{l-1},z_{l},D_{l+1},\dots,D_{m})\\
&=\sum_{k=1}^{q-l}\bigg(\alpha_{1,k}w_{k}^{(l)}+\cdots +\alpha_{l-1,k}w_{k+l-2}^{(l)} +\alpha_{l,k}w_{k+l-1}^{(l)}\bigg)e_{k,k+l}\\
&+\bigg(\alpha_{1,q}w_{q-l}^{(l)}+\cdots + \alpha_{l-1,q}w_{q-2}^{(l)}+\alpha_{l,q-l}w_{q}^{(l)}\bigg)e_{q-l,q+1}\\
&+\cdots+ \bigg(\alpha_{1,n-1}w_{q-l}^{(l)}+\cdots + \alpha_{l-1,n-1}w_{q-2}^{(l)}+\alpha_{l,q-l}w_{n-1}^{(l)}\bigg)e_{q-l,n}
\end{align*}
where $\alpha_{l,k}\neq0$ for every $k=1$, \dots, $q-l$.

Then given a matrix $B=\displaystyle\sum_{k=1}^{q-l}b_{k}e_{k,k+l}+b_{q-l+1}e_{q-l,q+1}+\cdots+ b_{n-l}e_{q-l,n}\in \mathcal{A}_{\overline{l}}$ we take 
\[
f(C_{1},\dots,C_{l-1},z_{l},D_{l+1},\dots,D_{m})=B
\]
and we obtain a linear system in the variables $w^{(l)}$ whose solution (not necessarily unique) can be found recursively. 
\end{proof}

\begin{thm}\label{maintheorem}
Let $F$ be a field with at least $n(n-1)/2$ elements, let $UT_{n}=\bigoplus_{k\in\mathbb{Z}_{q}}\mathcal{A}_{k}$ be endowed with the elementary $\mathbb{Z}_{q}$-grading given by the sequence $(\overline{0},\overline{1},\dots,\overline{q-2},\overline{q-1},\dots,\overline{q-1})$ and let $f\in F\langle X \rangle^{gr}$ be a multilinear polynomial. Then $Im(f)$ on $UT_{n}$ is $\{0\}$, $J^{r}$, $\mathcal{B}_{\overline{l},r}$ or $\mathcal{A}_{\overline{l}}$, where $J=Jac(\mathcal{A}_{0})$. In particular, the image is always a homogeneous vector subspace.
\end{thm}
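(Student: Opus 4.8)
\emph{Overview.} The plan is to run a case analysis on the homogeneous degrees of the variables of $f$, reducing each case to one of Lemmas~\ref{l1Zq}, \ref{l2Zq} and~\ref{l3Zq}. Since $supp(\mathcal{A})=\mathbb{Z}_{q}$ is abelian, Proposition~\ref{basicprop}(4) already yields that $Im(f)$ is homogeneous: writing $d_{1},\dots,d_{s}\in\{1,\dots,q-1\}$ for the integer representatives of the degrees of the non-neutral variables $z_{1},\dots,z_{s}$ of $f$ and $D=d_{1}+\cdots+d_{s}$ (with $D=0$ when $f$ has no non-neutral variable), one has $Im(f)\subset\mathcal{A}_{\overline{D}}$. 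The whole statement is then obtained from the three regimes $D=0$, $1\le D\le q-1$ and $D\ge q$.

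\emph{The extreme regimes.} If $s=0$, every admissible evaluation of $f$ lies in $\mathcal{A}_{0}\cong T$, so $Im(f)$ on $\mathcal{A}$ equals $Im(f)$ on $T$, which by Lemma~\ref{lblock} is $J^{r}$ with $r$ the commutator degree of $f$ (and $J^{r}=\{0\}$ once $r\ge n-q+1$); this is Lemma~\ref{l1Zq}. If $s\ge1$ and $D\ge q$, I would show $f\in Id^{gr}(\mathcal{A})$: expanding any monomial of $f$ over matrix units and recording which of the blocks of $\mathcal{A}_{0}$ (the $q-1$ blocks of size one, then the corner $UT_{n-q+1}$) the partial product occupies, each non-neutral factor of degree $\overline{d}$ advances the block index by exactly $d$ (possibly landing in the corner block), neutral factors never change it, and once the corner block is reached no further non-neutral factor can act (no non-neutral matrix unit has its row in the corner block); since the non-neutral factors must supply a total advance $D\ge q$ within the $q$ available blocks, every such evaluation vanishes, so $Im(f)=\{0\}$.

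\emph{The main regime $1\le D\le q-1$.} Here $Im(f)\subset\mathcal{A}_{\overline{D}}$, and I would follow the pattern of Lemmas~\ref{l2Zq}--\ref{l3Zq}. Reducing modulo~(\ref{identity1}), put $f$ in the normal form preceding Lemma~\ref{l2Zq}, with an attached polynomial in the remaining neutral variables for each placement of the non-neutral ones, and let $r$ be the least commutator degree among all these attached polynomials, realized by some $g$. If $r\ge n-q+1$, each of them is a $T$-consequence of the product of $n-q+1$ commutators in~(\ref{identity3}), whence $f\in Id^{gr}(\mathcal{A})$ and $Im(f)=\{0\}$. If $1\le r\le n-q$, then a monomial whose matrix-unit path stays among the $q-1$ one-dimensional blocks makes the attached polynomial contribute its coefficient sum times a product of scalars, which is $0$ since $r\ge1$ (Lemma~\ref{sumofcoeffi}); hence the monomials contributing to $Im(f)$ must reach the corner block, where the attached polynomial, of commutator degree at least $r$, lands in $Jac(UT_{n-q+1})^{r}$ and pushes the column index to at least $q+r$, so $Im(f)\subset\mathcal{B}_{\overline{D},r}$. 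For the reverse inclusion I would substitute each $z_{i}$ by a product $z_{i}^{(1)}\cdots z_{i}^{(d_{i})}$ of $d_{i}$ fresh variables of degree $\overline{1}$; a product of $d_{i}$ elements of $\mathcal{A}_{\overline{1}}$ lies in $\mathcal{A}_{\overline{d_{i}}}$, so this is admissible and the resulting all-degree-$\overline{1}$ polynomial $\hat f$ (which has $D$ non-neutral variables and the same attached polynomials, hence the same $r$) satisfies $Im(\hat f)\subset Im(f)$; Lemma~\ref{l2Zq} applied to $\hat f$ then gives $\mathcal{B}_{\overline{D},r}=Im(\hat f)\subset Im(f)$, so $Im(f)=\mathcal{B}_{\overline{D},r}$. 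Finally, if $r=0$ the coefficient sum of $g$ is nonzero and the argument of Lemma~\ref{l3Zq} (again applied to $\hat f$ for ``$\supseteq$'') gives $Im(f)=\mathcal{A}_{\overline{D}}$. Assembling the three regimes yields the list $\{0\}$, $J^{r}$, $\mathcal{B}_{\overline{l},r}$, $\mathcal{A}_{\overline{l}}$ (with $\overline{l}=\overline{D}$).

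\emph{Expected main obstacle.} The delicate part is the regime $1\le D\le q-1$, and within it the simultaneous control of the two kinds of monomials: that the ``short'' ones (not reaching the corner) contribute nothing when $r\ge1$, which is exactly where the vanishing of the coefficient sums of \emph{all} the attached polynomials matters, and that the ``long'' ones fill out all of $\mathcal{B}_{\overline{D},r}$ (respectively all of $\mathcal{A}_{\overline{D}}$ when $r=0$), which rests on Theorem~\ref{TLuoWang} for the corner block $UT_{n-q+1}$ as in Lemmas~\ref{l2Zq} and~\ref{l3Zq}. Beyond those lemmas the only extra work is passing from non-neutral degree $\overline{1}$ to arbitrary non-neutral degrees, handled for the inclusions ``$\supseteq$'' by the substitution $z_{i}\mapsto z_{i}^{(1)}\cdots z_{i}^{(d_{i})}$ and for ``$\subseteq$'' by the block-index computation, so nothing fundamentally new is required there.
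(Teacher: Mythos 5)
Your proposal is correct and follows essentially the same route as the paper's proof: reduction to Lemmas \ref{l1Zq}--\ref{l3Zq}, with the lower bound obtained by substituting each non-neutral variable of degree $\overline{k}$ by a product of $k$ variables of degree $\overline{1}$ (Proposition \ref{basicprop}(3) applied to the resulting $\tilde f$, whose attached polynomials and hence whose $r$ coincide with those of $f$), and the upper bound by the normal-form/commutator-degree analysis modulo the identities (\ref{identity1})--(\ref{identity3}). You are in fact somewhat more explicit than the paper on the containment $Im(f)\subset\mathcal{B}_{\overline{l},r}$ and on the degenerate regime where the total integer degree of the odd variables is at least $q$, but no genuinely different idea is involved.
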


\begin{proof}
By Lemmas \ref{l1Zq}, \ref{l2Zq} and \ref{l3Zq} we only need to analyse the case where $f$ is a polynomial in neutral variables and non neutral ones (that is of degree different from $\overline{1}$). However we can reduce this case to the aforementioned lemmas. Indeed, modulo the graded identities (\ref{identity1}), (\ref{identity2}), (\ref{identity3}), let $f$ and $g$ be as in the comments before Lemma \ref{l2Zq}  and let $r$ be the commutator degree of $g$. Hence $Im(f)\subset \mathcal{B}_{\overline{l},r}$ if $r\neq0$ and $Im(f)\subset \mathcal{A}_{\overline{l}}$ otherwise. By Proposition \ref{basicprop}(3), the image of the polynomial $\tilde{f}$ obtained from $f$ by evaluating every non neutral variable $z_{i}$ of homogeneous degree $\overline{k}$ by a product of $k$ variables of homogeneous degree $\overline{1}$, is contained in $Im(f)$. But the polynomial $g$ defined for $f$ (see the comments before Lemma \ref{l2Zq}) is the same as the one defined for $\tilde{f}$. This allows us to get $\mathcal{B}_{\overline{l}}^{r}\subset Im(\tilde{f})$ in case $r\neq0$ and $\mathcal{A}_{\overline{l}}\subset Im(\tilde{f})$ otherwise.
\end{proof}

\begin{rem}
Considering similar computations one can easily see that the result is also valid for the elementary grading defined by the identities $z[y_{1},y_{2}]\equiv0$, (\ref{identity2}), and (\ref{identity3}). 
\end{rem}

In the next corollary we are assuming that $F$ is a field of characteristic zero and $\mathcal{A}=UT_{n}$ is endowed with the elementary $\Z_{q}$-grading given by the sequence $(\overline{0},\overline{1},\dots,\overline{q-2},\overline{q-1},\dots,\overline{q-1})$.  

\begin{cor}
The $T_{G}$-ideal $Id^{gr}(\mathcal{A})$ is generated by the graded identities (\ref{identity1}), (\ref{identity2}), and (\ref{identity3}). 
\end{cor}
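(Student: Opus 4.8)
The plan is to put $I:=\langle(\ref{identity1}),(\ref{identity2}),(\ref{identity3})\rangle^{T_{G}}$; the inclusion $I\subseteq Id^{gr}(\mathcal{A})$ has already been observed, so the whole task is to prove $Id^{gr}(\mathcal{A})\subseteq I$. Since $char(F)=0$, Proposition~\ref{multiidentity} reduces this to showing that every multilinear $f\in Id^{gr}(\mathcal{A})$ belongs to $I$, and I would argue by contraposition: assuming $f\notin I$, I would produce a graded evaluation on which $f$ does not vanish, contradicting $f\in Id^{gr}(\mathcal{A})$.

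Split according to whether $f$ uses non-neutral variables. If $f$ is a polynomial in neutral variables only, then (a neutral variable being evaluable only in $\mathcal{A}_{0}$) $f$ is an ordinary polynomial identity of $\mathcal{A}_{0}\cong F^{q-1}\times UT_{n-q+1}$; since the commutative factor contributes no identity not already valid on $UT_{n-q+1}$, we get $f\in Id(UT_{n-q+1})$, and the description of images of multilinear polynomials on upper triangular matrices recalled above then forces $f$ into the $T$-ideal generated by $[x_{1},x_{2}]\cdots[x_{2(n-q+1)-1},x_{2(n-q+1)}]$, which is precisely the set of neutral polynomials lying in $\langle(\ref{identity3})\rangle^{T_{G}}$; hence $f\in I$, against our assumption. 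So $f$ has a non-neutral variable, and I would run the normal-form reduction from the proof of Theorem~\ref{maintheorem}: modulo $(\ref{identity2})$ delete every monomial containing two adjacent non-neutral variables of opposite degree and, more generally (substituting a degree-$\overline{1}$ variable of $(\ref{identity2})$ by a product of non-neutral variables), every monomial with a sub-word of $q$ consecutive degree-$\overline{1}$ non-neutral variables; modulo $(\ref{identity1})$ sort, in each gap, the neutral variables standing to the left of a non-neutral one; this brings $f$ (modulo $I$) to the form $\sum y_{\bm{i_{1}}}z_{1}\cdots y_{\bm{i_{l}}}z_{l}\,g_{\bm{i_{1}},\dots,\bm{i_{l}}}+h$ of the discussion preceding Lemma~\ref{l2Zq}, after which $(\ref{identity3})$ reduces each neutral core modulo products of $n-q+1$ commutators. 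Since $f\notin I$, some monomial survives; choose one whose neutral core $g$ has minimal commutator degree $r$.

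To finish, collapse each non-neutral variable of degree $\overline{k}$ into a product of $k$ fresh degree-$\overline{1}$ variables; this is a graded endomorphism of $F\langle X\rangle^{gr}$, so by Proposition~\ref{basicprop}(3) the resulting polynomial $\tilde{f}$ satisfies $Im(\tilde{f})\subseteq Im(f)$, with the same neutral core $g$ and commutator degree $r$. If $r\geq 1$, Lemma~\ref{l2Zq} gives $\mathcal{B}_{\overline{l},r}\subseteq Im(\tilde{f})\subseteq Im(f)$; if $r=0$, Lemma~\ref{l3Zq} gives $\mathcal{A}_{\overline{l}}\subseteq Im(\tilde{f})\subseteq Im(f)$. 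In either case $Im(f)\neq\{0\}$, so $f\notin Id^{gr}(\mathcal{A})$, a contradiction. Hence $f\in I$, and $Id^{gr}(\mathcal{A})=I$.

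The step I expect to be the real obstacle is the reduction in the second paragraph: one must be certain that $(\ref{identity1})$, $(\ref{identity2})$, $(\ref{identity3})$ genuinely suffice to drive an arbitrary multilinear identity to $0$, equivalently that a multilinear polynomial surviving the reduction always has nonzero image on $\mathcal{A}$. The delicate point is the bookkeeping for monomials in which several non-neutral variables (possibly of different degrees) are multiplied together: one has to check that every vanishing of such a product which is forced by the structure of $\mathcal{A}$ is already a substitution-consequence of $(\ref{identity2})$, and that nothing is lost in passing from $f$ to $\tilde{f}$, so that the surviving monomials are exactly those captured by Lemmas~\ref{l2Zq} and \ref{l3Zq}.
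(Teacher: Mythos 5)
Your argument is correct and is essentially the paper's own proof, which likewise reduces to the multilinear case via Proposition~\ref{multiidentity} and then observes that the reductions in Lemmas~\ref{l1Zq}--\ref{l3Zq} and Theorem~\ref{maintheorem} show any multilinear $f$ not in $\langle(\ref{identity1}),(\ref{identity2}),(\ref{identity3})\rangle^{T_G}$ has nonzero image. Your write-up is in fact more explicit than the paper's two-line proof (in particular the neutral-variables case via Lemma~\ref{lblock} and the passage to $\tilde f$), and the ``delicate point'' you flag at the end is exactly the content already carried by those lemmas, so nothing further is needed.
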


\begin{proof}
Let $f\in Id^{gr}(\mathcal{A})$. By Proposition \ref{multiidentity} we may assume that $f$ is multilinear. Note that in the proof of Theorem \ref{maintheorem} and in the lemmas that precede it, we have shown that if $f$ is not a consequence of the identities (\ref{identity1}), (\ref{identity2}), and (\ref{identity3}), then $Im(f)\neq\{0\}$. In other words, if $f\in Id^{gr}(\mathcal{A})$, then $f$ is a consequence of the aforementioned identities.  
\end{proof}

We recall that in case $q=n$ we have the natural $\Z_{n}$-grading on $UT_{n}$. 

\begin{cor}\label{cnaturalUTn}
Let $F$ be a field with at least $n$ elements, let $UT_{n}$ be endowed with the natural $\mathbb{Z}_{n}$-grading and let $f\in F\langle X \rangle^{gr}$ be a multilinear polynomial. Then the image of $f$ on $UT_{n}$ is either zero or some homogeneous component.
\end{cor}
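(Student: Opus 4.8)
The plan is to deduce the corollary from Theorem~\ref{maintheorem} by taking $q=n$, the only extra work being to lower the cardinality hypothesis from $n(n-1)/2$ to $n$. For the natural $\mathbb{Z}_n$-grading the neutral component $\mathcal{A}_{\overline{0}}$ is the algebra of diagonal matrices, which is commutative and semisimple, so $J=Jac(\mathcal{A}_{\overline{0}})=\{0\}$. Hence, in the list of images produced by Theorem~\ref{maintheorem}, we have $J^{r}=\{0\}$ for every $r\ge1$, $J^{0}=\mathcal{A}_{\overline{0}}$, and the subspaces $\mathcal{B}_{\overline{l},r}$ never occur since they are defined only for $1\le r\le n-q=0$ (equivalently $\mathcal{B}_{\overline{l},r}=\{0\}$ whenever $r\ge1$). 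Thus the only possibilities are $\{0\}$, $\mathcal{A}_{\overline{0}}$, and the superdiagonals $\mathcal{A}_{\overline{l}}$ with $1\le l\le n-1$, that is, zero or a homogeneous component; it remains to check that this conclusion survives under the weaker bound $|F|\ge n$.

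First I would re-run the relevant cases. If $f=f(y_1,\dots,y_m)$ is in neutral variables only, I argue directly, with no restriction on $|F|$: evaluating on diagonal matrices $D_j=\sum_{k=1}^{n}\mu_k^{(j)}e_{kk}$ and using multilinearity together with the commutativity of the scalars $\mu_k^{(j)}$ gives
\[
f(D_1,\dots,D_m)=c_f\sum_{k=1}^{n}\mu_k^{(1)}\cdots\mu_k^{(m)}e_{kk},
\]
where $c_f$ is the sum of the coefficients of $f$; this is identically $0$ if $c_f=0$, and otherwise equals all of $\mathcal{A}_{\overline{0}}$ (take $\mu_k^{(1)}$ arbitrary and the remaining $\mu_k^{(j)}$ equal to $1$). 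If $f$ has odd variables, all of degree $\overline{1}$, then Lemma~\ref{l2Zq} is vacuous because its hypothesis $1\le r\le n-q$ is empty, while Lemma~\ref{l3Zq}, which requires only $|F|\ge n$, gives $Im(f)=\mathcal{A}_{\overline{l}}$ whenever the minimal commutator degree $r$ of the associated polynomials $g$ is $0$. For an $f$ whose odd variables have arbitrary degrees, I use the reduction from the proof of Theorem~\ref{maintheorem} --- replace each $z_i$ of degree $\overline{k}$ by a product of $k$ variables of degree $\overline{1}$ --- which produces a polynomial $\tilde{f}$ with $Im(\tilde{f})\subset Im(f)$, the same distinguished $g$ and the same minimal $r$; combined with $Im(f)\subset\mathcal{A}_{\overline{l}}$ from Proposition~\ref{basicprop}(4) this forces $Im(f)=\mathcal{A}_{\overline{l}}$ when $r=0$.

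The remaining possibility is $r\ge1$, which for $q=n$ is exactly the case not directly addressed by Lemma~\ref{l2Zq}; I expect this to be the main (if modest) obstacle, and would handle it by showing $Im(f)=\{0\}$. If the minimal commutator degree of the polynomials $g$ (including those inside $h$) is $\ge1$, then each such $g$ lies in $\langle[x_1,x_2]\rangle^{T}$, hence by Lemma~\ref{sumofcoeffi} has zero sum of coefficients, and therefore, by the displayed computation applied to $g$, evaluates to the zero matrix on any diagonal matrices. In the natural $\mathbb{Z}_n$-grading the variables occurring in each $g$ are neutral and are thus forced to take diagonal values; writing $f$ modulo the identity (\ref{identity1}) in the normal form preceding Lemma~\ref{l2Zq}, each of its monomials equals the evaluation of a product of diagonal and degree-$\overline{1}$ factors (supported on a single superdiagonal) times the evaluation of its $g$-part, and the latter is $0$. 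Hence every evaluation of $f$ vanishes, i.e.\ $Im(f)=\{0\}$. Collecting the cases, $Im(f)$ is $\{0\}$, $\mathcal{A}_{\overline{0}}$, or some $\mathcal{A}_{\overline{l}}$, which is the assertion of the corollary.
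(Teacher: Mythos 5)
Your argument is correct and follows essentially the same route as the paper: the paper's proof of this corollary is a one-line appeal to Proposition \ref{basicprop}(2), Lemma \ref{sumofcoeffi}, and the proof of Lemma \ref{l3Zq}, which are precisely the ingredients you use (direct coefficient-sum analysis on the commutative diagonal for the neutral case, Lemma \ref{l3Zq} for $r=0$, and the observation that for $q=n$ one has $J=\{0\}$ and no subspaces $\mathcal{B}_{\overline{l},r}$). Your explicit handling of the case $r\geq 1$ --- all trailing polynomials $g$ lie in $\langle[x_{1},x_{2}]\rangle^{T}$ and hence vanish on diagonal matrices, so $Im(f)=\{0\}$ --- is a useful filling-in of a step the paper leaves implicit.
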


\begin{proof}
It follows from Proposition \ref{basicprop}(2), Lemma \ref{sumofcoeffi}, and from the proof of Lemma \ref{l3Zq}.
\end{proof}

\begin{rem}
Note that the same result also holds if we consider the natural $\Z$-grading on $UT_{n}$. Analogous results hold for the lower triangular matrix algebra $LT_{n}$ as well (we will use this remark in the next section).
\end{rem}

\section{Graded identities and traceless matrices}
\label{sect5}

In this section we give a sufficient condition for the subspace of the traceless matrices to be contained in the image of a multilinear polynomial on the full matrix algebra.

We start by recalling the following result from \cite{ARo}.

\begin{thm}[\cite{ARo}]\label{tAR}
Let $D$ be a division ring, $n \geq 2$ an integer, and $A \in M_n(D)$ a non central matrix. Then $A$ is similar (conjugate) to a matrix in $M_n(D)$ with at most one non-zero entry on the main diagonal. In particular, if $A$ has trace zero, then it is similar to a matrix in $M_n(D)$ with only zeros on the main diagonal.
\end{thm}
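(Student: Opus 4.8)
The plan is to prove the first (main) assertion by induction on $n$, using a single linear-algebra lemma to peel off one coordinate at a time, and then to deduce the trace-zero statement as an immediate corollary. Throughout I view $D^n$ as a right $D$-vector space on which $M_n(D)$ acts by left multiplication, so that ``$A$ non-central'' means precisely $A\notin Z(D)\cdot I$.

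The engine is the following lemma: \emph{if $A\in M_n(D)$ is not central, then there is a column $v\in D^n$ such that $v$ and $Av$ are linearly independent over $D$.} I would prove it by contradiction. If every nonzero $v$ satisfied $Av=v\gamma_v$ for some $\gamma_v\in D$, then comparing $A(v+w)$ with $Av+Aw$ on a linearly independent pair $v,w$ (which exists since $n\ge 2$) forces $\gamma_{v+w}=\gamma_v$ and $\gamma_{v+w}=\gamma_w$, whence $\gamma_v\equiv\gamma$ is independent of $v$ (any two nonzero vectors being linked through a third one that is independent of both); replacing $v$ by $v\mu$ and using $A(v\mu)=(Av)\mu$ gives $\gamma\mu=\mu\gamma$ for all $\mu\in D$, so $\gamma\in Z(D)$; and then $Ae_j=e_j\gamma$ for all $j$ forces $A=\gamma I$, contradicting non-centrality.

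Given such a $v$, I complete $\{v,Av\}$ to a basis $u_1=v,\ u_2=Av,\ u_3,\dots,u_n$ of $D^n$; in this basis the first column of $A$ is the second standard column, so $A$ is similar to a matrix of block form
\[
M=\begin{pmatrix} 0 & \beta \\ e_1 & C \end{pmatrix},\qquad \beta\in D^{\,1\times(n-1)},\quad C\in M_{n-1}(D),
\]
whose main diagonal is $(0, C_{11},\dots,C_{n-1,n-1})$. This is the inductive step: for $n=2$, $M$ already has at most one nonzero diagonal entry; for $n>2$, if $C$ is not central we apply the inductive hypothesis to obtain $P\in GL_{n-1}(D)$ making $PCP^{-1}$ have at most one nonzero diagonal entry, and conjugating $M$ by $\mathrm{diag}(1,P)$ then finishes the argument.

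The one place that needs genuine care — and the step I expect to be the main obstacle — is the exceptional case in which the block $C$ produced by this reduction is central, i.e. $C=\lambda I_{n-1}$ with $\lambda\in Z(D)$, so the inductive hypothesis is unavailable and $M$ has diagonal $(0,\lambda,\dots,\lambda)$. If $\lambda=0$ there is nothing to do; if $\lambda\neq0$ one must argue directly, exploiting once more that $M$ is not central — for instance, that $M-\lambda I$ has rank at most $2$, hence $M$ has minimal polynomial of degree at most $3$ and a very restricted rational canonical form — in order to produce by hand a similarity bringing $M$ to a matrix with a single nonzero diagonal entry. Finally, the trace-zero statement follows at once: in the reduced form the unique possibly-nonzero diagonal entry equals the trace of that matrix, which over a field is $\mathrm{tr}(A)=0$; over a general division ring it lies in $\mathrm{tr}(A)+[D,D]=[D,D]$, and one short additional step redistributing a sum of additive commutators yields the all-zero diagonal.
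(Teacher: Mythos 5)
The paper offers no proof of this statement --- it is quoted from Amitsur--Rowen \cite{ARo} and used as a black box (and in Section \ref{sect5} only over a field, where the ``in particular'' clause is immediate since the trace is then a similarity invariant) --- so your attempt can only be judged on its own terms. Up to the point you yourself flag, it is sound: the lemma producing $v$ with $v,Av$ independent is correct and standard, the passage to the block form $M=\bigl(\begin{smallmatrix}0&\beta\\ e_1&C\end{smallmatrix}\bigr)$ is correct, and the induction works whenever $C$ is non-central. But the exceptional case $C=\lambda I_{n-1}$, $\lambda\neq 0$ central, is a genuine gap and not a routine verification. You do not carry it out, and the tools you propose for it (rank of $M-\lambda I$, minimal polynomial, rational canonical form) are field-theoretic: over a noncommutative division ring there is no rational canonical form in the usual sense, polynomial evaluation at matrices is not multiplicative, and ``eigenvalue'' only makes sense up to conjugacy, so the sketch cannot be completed as written for arbitrary $D$. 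The case cannot be defined away either --- it occurs exactly when $\operatorname{rank}(A-\lambda I)\leq 2$ for some central $\lambda$ --- and the natural escape routes fail: re-choosing the complementary vectors $u_3,\dots,u_n$, or replacing $u_j$ by $u_j+u_2\mu$, perturbs $C$ only by terms of the form $\lambda\mu-\mu\lambda$, which vanish precisely because $\lambda$ is central. This scalar case is the heart of the proof; the usual fix (as in Fillmore-type arguments) is to prove the stronger statement with the whole diagonal prescribed, so that the induction hypothesis is strong enough to absorb it, or to dispatch it by an explicit low-dimensional computation.

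The second gap is the ``in particular'' clause over a general division ring. Your observation that the surviving diagonal entry $d$ lies in $\operatorname{tr}(A)+[D,D]=[D,D]$ is correct, but the promised ``one short additional step redistributing a sum of additive commutators'' is not short: conjugation by an elementary matrix $I+\alpha e_{ij}$ changes the diagonal by adding $\alpha M_{ji}$ to one entry and subtracting $M_{ji}\alpha$ from another, so each such move only shifts the diagonal sum by a single commutator $[\alpha,M_{ji}]$ while disturbing the off-diagonal entries you need to control; turning a diagonal $(0,\dots,0,d)$ with $d\in[D,D]$ into an identically zero diagonal is essentially the substance of the Amitsur--Rowen paper itself. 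Over a field this step is vacuous (there $d=\operatorname{tr}(A)=0$), which is the only case the present paper uses, but as a proof of the theorem as stated it is missing.
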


Consider the natural $\mathbb{Z}$-grading on $M_{n}(F)=\bigoplus_{r\in\mathbb{Z}}(M_{n}(F))_{r}$ given by
\[
(M_{n}(F))_{r}=\left\{\begin{array}{cl}
    span\{e_{k,k+r}\mid k=1,\dots,n-r\}, &  \mbox{if} \ 0\leq r \leq n-1 \\
    span\{e_{k-r,k}\mid k=1,\dots,n+r\}, &  \mbox{if} \ -n+1\leq r\leq -1
    \\
    \{0\}, & \mbox{elsewhere}
\end{array}\right.
\]

In this section we denote $F\langle X \rangle^{gr}$ the free $\mathbb{Z}$-graded algebra. We also keep the notation of using variables $y$'s for neutral variables and $z$'s for non neutral ones. 

\begin{thm}
Let $n\geq2$ be an integer, let $F$ be a field with at least $(n-1)n+1$ elements where $char(F)$ does not divide $n$, and let $f\in F\langle X \rangle$ be a multilinear polynomial. If  $f(y_{1},\dots,y_{m-1},z)\notin \langle [y_{1},y_{2}] \rangle^{T_{\mathbb{Z}}}$ for every non neutral variable $z$, then $Im(f)$ on $M_{n}(F)$ contains $sl_{n}(F)$.
\end{thm}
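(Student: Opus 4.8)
The plan is to exploit the $\mathbb{Z}$-grading on $M_n(F)$ together with the hypothesis that specializing $f$ by making all but one variable neutral and keeping one variable $z$ non-neutral never lands in $\langle[y_1,y_2]\rangle^{T_{\mathbb{Z}}}$. The key observation is that the neutral component $(M_n(F))_0$ is the diagonal subalgebra $D$, which is isomorphic to $F^n$, and that $D$ together with the component $(M_n(F))_r$ for a fixed $r\neq 0$ generates a copy of $UT_{n-r}$-type behaviour (or more precisely of a block-triangular/lower-triangular algebra) inside $M_n(F)$; this is exactly the setting in which the results of Section~\ref{sect4} — especially Corollary~\ref{cnaturalUTn} and the remark following it — apply. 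First I would fix a non-neutral variable $z$ occurring in $f$, say of $\mathbb{Z}$-degree $r$ with $1\le r\le n-1$ (the case $r<0$ is symmetric via the $LT_n$ remark), and write $g(y_1,\dots,y_{m-1}) = f(y_1,\dots,y_{m-1},z)$ where now $z$ is treated as an extra non-neutral variable. The hypothesis says $g \notin \langle[y_1,y_2]\rangle^{T_{\mathbb{Z}}}$.

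Next I would restrict attention to the homogeneous subalgebra $\mathcal{S} = D \oplus (M_n(F))_r \oplus (M_n(F))_{2r} \oplus \cdots$, the $\mathbb{Z}$-graded (or, after rescaling the grading group, $\mathbb{Z}_q$-graded for suitable $q$) subalgebra generated by the diagonal and the degree-$r$ part. Up to relabelling of indices this $\mathcal{S}$ is graded-isomorphic to $UT_{n}$ (or a lower-triangular analogue) equipped with one of the elementary gradings treated in Section~\ref{sect4} — the block sits so that the degree-$r$ generators play the role of the "$\overline 1$" variables. Evaluating $f$ with the variable $z$ of degree $r$ and all other variables neutral, i.e.\ evaluating $g$ inside $\mathcal{S}$, the proof of Lemma~\ref{l3Zq} (commutator degree $0$ case) shows that since $g\notin\langle[y_1,y_2]\rangle^{T_{\mathbb{Z}}}$ the image of $g$ on $\mathcal{S}$ is the full homogeneous component $(M_n(F))_r$ — here I use that $F$ has enough elements, which is guaranteed by the hypothesis $|F|\ge (n-1)n+1 \ge n$, and that the neutral-variable argument producing the nonvanishing commutative polynomials $p_k$ goes through verbatim. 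Consequently $(M_n(F))_r \subset Im(f)$ for every $r$ with $1\le |r|\le n-1$. Since $Im(f)$ is always closed under multiplication by scalars and, by Proposition~\ref{basicprop}(3), invariant under graded endomorphisms, and since it contains every off-diagonal homogeneous component, I would then argue that $Im(f)$ contains the span of all $e_{ij}$ with $i\neq j$.

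To finish, I would use Theorem~\ref{tAR}: every traceless matrix $A\in M_n(F)$ is similar to a matrix with zero diagonal, hence to a matrix in the span of the off-diagonal matrix units. The remaining point is that $Im(f)$ is invariant under conjugation by $GL_n(F)$ (this follows because $g \mapsto P g P^{-1}$ is an algebra automorphism of $M_n(F)$, though not a graded one, and $Im(f)$ of an ordinary multilinear $f$ is automatically conjugation-invariant); therefore every traceless matrix lies in $Im(f)$, i.e.\ $sl_n(F)\subset Im(f)$. The condition that $\mathrm{char}(F)$ does not divide $n$ enters to ensure $sl_n(F)$ is genuinely a codimension-one subspace (so that "traceless" and "zero diagonal up to conjugacy" match up cleanly) and that scalar matrices are not themselves traceless.

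The main obstacle I anticipate is the transition from "$Im(f)$ contains each off-diagonal homogeneous component $(M_n(F))_r$ separately" to "$Im(f)$ contains their full linear span": images of polynomials need not be closed under addition a priori. The resolution is precisely the conjugation-invariance of $Im(f)$ combined with Theorem~\ref{tAR} — every traceless matrix is conjugate into a single off-diagonal component's worth of support only after a change of basis, so one must be careful that conjugating the sum-of-components picture back does not leave $Im(f)$. I would handle this by noting that a matrix with zero diagonal lies in the $GL_n(F)$-orbit of matrices supported on $(M_n(F))_1 \oplus \cdots \oplus (M_n(F))_{n-1}$ is not quite enough; instead, the cleanest route is: the lower/upper triangular nilpotent matrices (strictly) form $Im$ of the relevant specialization by the $UT_n$ results of Section~\ref{sect4} applied to the strictly-triangular subalgebra, giving all strictly upper triangular matrices in $Im(f)$ at once, and then conjugation by $GL_n(F)$ together with Theorem~\ref{tAR} sweeps these onto all of $sl_n(F)$. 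So the real work is identifying the right triangular subalgebra and invoking the earlier classification to get an entire nil-triangular block inside $Im(f)$ in one stroke, rather than component by component.
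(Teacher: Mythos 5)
Your overall strategy matches the paper's up to the point you yourself flag as the main obstacle: reduce via Theorem \ref{tAR} and conjugation-invariance to zero-diagonal matrices, and use the natural $\mathbb{Z}$-grading on $UT_{n}$ (and $LT_{n}$) together with the results of Section \ref{sect4} to show that each off-diagonal homogeneous component $(M_{n}(F))_{i}$, $1\le |i|\le n-1$, lies in $Im(f)$. The gap is in your proposed resolution of that obstacle. Your ``cleanest route'' --- obtaining all strictly upper triangular matrices in $Im(f)$ at once and then sweeping by conjugation --- cannot work: Theorem \ref{tAR} only says that a traceless matrix is similar to one with \emph{zero diagonal}, not to a strictly upper triangular one. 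Strictly upper triangular matrices are nilpotent, so the $GL_{n}(F)$-orbit of that set consists only of nilpotent matrices, while $sl_{n}(F)$ contains plenty of non-nilpotent elements (e.g.\ $e_{12}+e_{21}$). So this step fails, and you are left exactly where you started: knowing each component $(M_{n}(F))_{i}$ separately is in $Im(f)$ but not their span.

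The missing idea is to exploit the linearity of $f$ in the single non-neutral variable $z$ \emph{after fixing the diagonal substitutions once and for all}. Substituting generic diagonal matrices for $y_{1},\dots,y_{m-1}$ and a generic degree-$i$ matrix for $z$, one gets $f(y_{1},\dots,y_{m-1},z^{(i)})=\sum_{k}p_{k,i}\,w_{k}^{(m,i)}e_{k,k+i}$ with each $p_{k,i}$ a nonzero commutative polynomial with multilinear monomials (this is where Corollary \ref{cnaturalUTn} enters), and similarly polynomials $q_{k,i}$ for the negative degrees via $LT_{n}$. There are exactly $(n-1)n$ such polynomials, so Corollary \ref{ccomutpolyfinite} --- and this is precisely where the hypothesis $|F|\ge (n-1)n+1$ is used --- produces a \emph{single} tuple of diagonal matrices $D_{1},\dots,D_{m-1}$ making all of them nonzero simultaneously. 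Then $z\mapsto f(D_{1},\dots,D_{m-1},z)$ is a linear map carrying each $(M_{n}(F))_{i}$ onto itself, hence its image contains the sum of all off-diagonal components, i.e.\ every zero-diagonal matrix; conjugation and Theorem \ref{tAR} then finish the proof. Without this simultaneity-plus-linearity argument your proof does not close.
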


\begin{proof}
Since $Im(f)$ is invariant under automorphisms, by Theorem \ref{tAR} it is enough to show that $Im(f)$ contains all matrices with zero diagonal. Let $A$ be a zero diagonal matrix and write $A$ as the sum of its homogeneous components
\[
A=\sum_{i=-n+1}^{-1}A_{i}+\sum_{i=1}^{n-1}A_{i}
\]
where $A_{i}=\displaystyle\sum_{k=1}^{n+i}a_{k-i,k}e_{k-i,k}$ for $i=-n+1$, \dots, $-1$ and $A_{i}=\displaystyle\sum_{k=1}^{n-i}a_{k,k+i}e_{k,k+i}$ for $i=1$, \dots, $n-1$.

By hypothesis $f(y_{1},\dots,y_{m-1},z^{(i)})$ is not a graded polynomial identity for $UT_{n}$ with the natural $\mathbb{Z}$-grading, for every variable $z^{(i)}$ of homogeneous degree $i$ where $1\leq i\leq n-1$. 

We now consider the following evaluations on generic matrices: $y_{j}=\displaystyle\sum_{k=1}^{n}w_{k}^{(j)}e_{kk}$ for all $j=1$, \dots, $m-1$ and $z^{(i)}=\displaystyle\sum_{k=1}^{n-i}w_{k}^{(m,i)}e_{k,k+i}$. 

Hence
\[
f(y_{1},\dots,y_{m-1},z^{(i)})=\sum_{k=1}^{n-i}p_{k,i}w_{k}^{(m,i)}e_{k,k+i}
\]
where $p_{k,i}$ is a polynomial in the variables $w_{k}^{(j)}$. Since $f\notin Id^{gr}(UT_{n})$, Corollary \ref{cnaturalUTn} gives us that the image of $f(y_{1},\dots,y_{m-1},z^{(i)})$ on $UT_{n}$ is exactly $(UT_{n})_{i}$. Hence all $p_{k,i}$ are nonzero polynomials. Moreover note that $p_{k,i}$ is such that all its monomials are multilinear ones. 

Analogously, we also have that $f(y_{1},\dots,y_{m-1},z^{(i)})$ is not a graded polynomial identity for the lower triangular matrix algebra $LT_{n}$ endowed with the natural $\mathbb{Z}$-grading, for $i=-n+1$, \dots, $-1$. Therefore 
\[
f(y_{1},\dots,y_{m-1},z^{(i)})=\sum_{k=1}^{n-i}q_{k,i}w_{k}^{(m,-i)}e_{k+i,k}
\]
where $z^{(i)}=\displaystyle\sum_{k=1}^{n+i}w_{k}^{(m,-i)}e_{k-i,k}$ and $q_{k,i}$ are nonzero commutative polynomials with multilinear monomials.

The number of polynomials $p_{k,i}$ and $q_{k,i}$ is exactly $(n-1)n$. We now apply Corollary \ref{ccomutpolyfinite} to get an evaluation of all variables $w_{k}^{(j)}$ such that the polynomials $p_{k,i}$ and $q_{k,i}$ assume simultaneously nonzero values in $F$. Such evaluations give us diagonal matrices $D_{1}$, \dots, $D_{m}$ such that  
\[
f(D_{1},\dots,D_{m-1},z^{(i)})=\sum_{k=1}^{n-i}\alpha_{i,k}w_{k}^{(m,i)}e_{k,k+i}
\]
where $\alpha_{i,k}$ are nonzero scalars. Therefore each matrix $A_{i}$ can be realized as $f(D_{1},\dots,D_{m-1},B_{i})$ for a suitable matrix $B_{i}\in (UT_{n})_{i}$, for every $i=1$, \dots, $n-1$. Similarly we also have that each matrix $A_{i}$ can be realized as $f(D_{1},\dots,D_{m-1},C_{i})$ for a suitable matrix $C_{i}\in (LT_{n})_{i}$, for all $i=-n+1$, \dots, $-1$. Hence
\[
A=\sum_{i=-n+1}^{-1}A_{i}+\sum_{i=1}^{n-1}A_{i}=\sum_{i=-n+1}^{-1}f(D_{1},\ldots,D_{m-1},C_{i})+\sum_{i=1}^{n-1}f(D_{1},\dots,D_{m-1},B_{i})
\]
and it is enough to use the linearity of $f$ in one variable to get $A\in Im(f)$.
\end{proof}

\section{The low dimension cases}
\label{sect6}

\subsection{Arbitrary gradings on $UT_{2}$ and $UT_{3}$}

We start this section with the following proposition.

\begin{prp}\label{lowprop}
Let $G$ be a group and let $\mathcal{A}$ and $\mathcal{B}$ be two $G$-graded algebras such that $\mathcal{B}$ is a graded homomorphic image of $\mathcal{A}$. Let $f\in F\langle X \rangle^{gr}$ be a graded  polynomial and assume that $f(\mathcal{A})$ is a homogeneous subspace of $\mathcal{A}$. Then $f(\mathcal{B})$ is also a homogeneous subspace of $\mathcal{B}$. 
\end{prp}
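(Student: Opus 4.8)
The plan is to use the surjective graded homomorphism $\varphi\colon\mathcal{A}\to\mathcal{B}$ to transport the homogeneity of $f(\mathcal{A})$ to $f(\mathcal{B})$. First I would record the basic fact that $\varphi(f(\mathcal{A}))=f(\mathcal{B})$: for any graded homomorphism $\psi\colon F\langle X\rangle^{gr}\to\mathcal{B}$, since $\varphi$ is graded and surjective, each homogeneous generator $x_i^{(g)}$ can be lifted to some $a_i\in\mathcal{A}_g$ with $\varphi(a_i)=\psi(x_i^{(g)})$, which yields a graded homomorphism $\tilde\psi\colon F\langle X\rangle^{gr}\to\mathcal{A}$ with $\varphi\circ\tilde\psi=\psi$; evaluating at $f$ gives $\psi(f)=\varphi(\tilde\psi(f))\in\varphi(f(\mathcal{A}))$, and the reverse inclusion is immediate since $\varphi$ maps graded homomorphisms into $\mathcal{B}$ to compositions. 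Hence $f(\mathcal{B})=\varphi(f(\mathcal{A}))$.

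Next I would use that $\varphi$ respects the gradings. Write $f(\mathcal{A})=\bigoplus_{g\in G}\bigl(f(\mathcal{A})\cap\mathcal{A}_g\bigr)$, which holds by hypothesis. Then $f(\mathcal{B})=\varphi(f(\mathcal{A}))=\sum_{g\in G}\varphi\bigl(f(\mathcal{A})\cap\mathcal{A}_g\bigr)$, and since $\varphi(\mathcal{A}_g)\subset\mathcal{B}_g$, each summand lies in $\mathcal{B}_g$. Therefore $f(\mathcal{B})=\sum_{g\in G}\bigl(f(\mathcal{B})\cap\mathcal{B}_g\bigr)$, where the summand indexed by $g$ contains $\varphi(f(\mathcal{A})\cap\mathcal{A}_g)$. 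Because the $\mathcal{B}_g$ are independent, this sum is automatically direct, so $f(\mathcal{B})=\bigoplus_{g\in G}\bigl(f(\mathcal{B})\cap\mathcal{B}_g\bigr)$, i.e. $f(\mathcal{B})$ is a homogeneous subspace of $\mathcal{B}$. One should also note that $f(\mathcal{A})$ being a subspace forces $f(\mathcal{B})=\varphi(f(\mathcal{A}))$ to be a subspace of $\mathcal{B}$, so the statement makes sense.

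The only subtle point — and the place I expect to need a line of care — is the lifting step: given a homogeneous element $b\in\mathcal{B}_g$, one must be able to choose a preimage in $\mathcal{A}_g$ rather than merely in $\mathcal{A}$. This follows because $\varphi$ is a \emph{graded} surjection: restricting to components, $\varphi|_{\mathcal{A}_g}\colon\mathcal{A}_g\to\mathcal{B}_g$ is surjective (if $b=\varphi(a)$ and $a=\sum_h a_h$ with $a_h\in\mathcal{A}_h$, then comparing components gives $b=\varphi(a_g)$). Everything else is a routine diagram chase, so no genuine obstacle arises; the proof is short.
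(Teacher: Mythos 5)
Your proof is correct and follows essentially the same route as the paper's: both rest on the identity $f(\mathcal{B})=\varphi(f(\mathcal{A}))$, with homogeneous lifts supplied by the graded surjectivity of $\varphi$, and then transport the subspace and homogeneity properties of $f(\mathcal{A})$ through $\varphi$. The only cosmetic difference is that you push the decomposition $f(\mathcal{A})=\bigoplus_{g}\bigl(f(\mathcal{A})\cap\mathcal{A}_{g}\bigr)$ forward at the level of subspaces, whereas the paper decomposes an arbitrary element of $f(\mathcal{B})$ and matches its homogeneous components with those of a preimage.
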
 

\begin{proof}
Let $\phi\colon \mathcal{A}\rightarrow \mathcal{B}$ be a graded epimorphism. We start by noting that $\phi(f(\mathcal{A}))=f(\phi(\mathcal{A}))$. Then taking $b_{i}^{(1)}$, $b_{i}^{(2)}\in \mathcal{B}_{g_{i}}$, $i=1$, \dots, $m$, we have $b_{i}^{(j)}=\phi(a_{i}^{(j)})$ for some $a_{i}^{(j)}\in \mathcal{A}_{g_{i}}$, since $\phi$ is surjective. This leads us to
\begin{align*}
&\alpha f(b_{1}^{(1)},\dots,b_{m}^{(1)})+f(b_{1}^{(2)},\dots,b_{m}^{(2)})\\
&=\alpha f(\phi(a_{1}^{(1)}),\dots,\phi(a_{m}^{(1)}))+f(\phi(a_{1}^{(2)}),\dots,\phi(a_{m}^{(2)}))\\
&=\phi(\alpha f(a_{1}^{(1)},\dots,a_{m}^{(1)})+f(a_{1}^{(2)},\dots,a_{m}^{(2)}))
\end{align*}
which is an element from $\phi(f(\mathcal{A}))=f(\phi(\mathcal{A}))$ since we are assuming that $f(\mathcal{A})$ is a subspace. Then $f(\mathcal{B})$ is also a subspace.

Now we assume that $f(\mathcal{A})$ is a homogeneous subspace and let $b=b_{h_{1}}+\cdots + b_{h_{k}}$ be an element in $f(\mathcal{B})$ written as the sum of its homogeneous components. Since $b\in f(\mathcal{B})$ let 
\[
b=f(b_{1},\dots,b_{m})=f(\phi(a_{1}),\dots,\phi(a_{m}))=\phi(f(a_{1},\dots,a_{m}))
\]
for some $b_{i}=\phi(a_{i})$, and let 
\[
f(a_{1},\dots,a_{m})=a_{g_{1}}+\cdots + a_{g_{l}}
\]
be the sum of its homogeneous components. It follows that 
\[
b=\phi(a_{g_{1}})+\cdots +\phi(a_{g_{l}})
\]
and since $\phi$ is a graded homomorphism we must have $k=l$ and every $g_{t}$ must be equal to some $h_{s}$. Without loss of generality, we assume $b_{g_{t}}=\phi(a_{g_{t}})$. Now it is enough to use that $f(\mathcal{A})$ is homogeneous and $\phi(f(\mathcal{A}))=f(\phi(\mathcal{A}))$.
\end{proof}

\begin{rem}\label{remarkprop}
In the proof of Proposition \ref{lowprop} we have not used the associativity of $\mathcal{A}$. Therefore it also holds for arbitrary algebras, in particular an analogous proposition holds for graded Jordan algebras. 
\end{rem}

As a consequence of Proposition \ref{lowprop} we have the following theorem.

\begin{thm}\label{theoremUT2}
Let $UT_{2}=\mathcal{A}=\bigoplus_{g\in G}\mathcal{A}_{g}$ be some grading on $\mathcal{A}$ and let $f\in F\langle X \rangle^{gr}$ be a multilinear graded polynomial. Then $f(\mathcal{A})$ is a homogeneous subspace of $\mathcal{A}$.
\end{thm}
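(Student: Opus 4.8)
The plan is to reduce the statement about an arbitrary $G$-grading on $UT_2$ to the handful of gradings we already understand, using Theorem \ref{gradingsupper} and Proposition \ref{lowprop}. By Theorem \ref{gradingsupper} every $G$-grading on $UT_2$ is graded isomorphic to an elementary one, and since the image of a polynomial is preserved (as a subset, hence also its homogeneity) under a graded isomorphism, it suffices to treat elementary gradings. An elementary grading on $UT_2$ is determined by $\deg e_{12}=g\in G$. If $g=1$ (the trivial element) the grading is trivial, and then $f$ is just an ordinary multilinear polynomial on $UT_2$; by Theorem \ref{tGargateThiago}, Theorem \ref{TLuoWang}, or the results cited in the introduction (\cite{wang, LWa, GMe}), its image is $\{0\}$, $J$, or $UT_2$, all homogeneous subspaces of the trivially graded algebra. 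If $g$ has order $2$ this is precisely the natural $\Z_2$-grading, which is the $q=n=2$ case of the grading studied in Section \ref{sect4}, so Theorem \ref{maintheorem} (or Corollary \ref{cnaturalUTn}) applies and the image is one of $\{0\}$, $J$, $\mathcal{B}_{\overline 1,r}$, $\mathcal{A}_{\overline 1}$ — in particular a homogeneous subspace.

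For the remaining case, when $g$ has order $\ge 3$ (or infinite order), the support of the grading is $\{1,g\}$, which is contained in the cyclic subgroup $\langle g\rangle$, and the grading is exactly the restriction of the natural $\Z$-grading on $UT_2$ (regrouping $\Z$ onto $\langle g\rangle$). So here I would invoke the remark following Corollary \ref{cnaturalUTn}, which asserts the analogue of Corollary \ref{cnaturalUTn} for the natural $\Z$-grading on $UT_n$: the image of a multilinear graded polynomial is either $0$ or a homogeneous component. Alternatively, one observes directly that for such a grading $UT_2$ is a graded homomorphic image of $UT_2$ with the natural $\Z$-grading (via the degree map $\Z\to\langle g\rangle$), and then Proposition \ref{lowprop} transports homogeneity of the image.

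Concretely, the cleanest packaging is: fix the grading, reduce to an elementary one via Theorem \ref{gradingsupper}; observe that this elementary grading is a coarsening of the natural $\Z$-grading on $UT_2$ along the group homomorphism $\Z\to G$, $k\mapsto g^k$; since coarsening the grading on the free algebra and on $UT_2$ compatibly realises the $G$-graded $UT_2$ as a graded homomorphic image of the $\Z$-graded $UT_2$ (same algebra, same decomposition, just relabelled degrees), apply Proposition \ref{lowprop} together with the known description of $f(UT_2)$ in the $\Z$-graded case (the remark after Corollary \ref{cnaturalUTn}). One small point to check carefully: for a given multilinear $f\in F\langle X\rangle^{gr}_G$ one must lift it to a $\Z$-graded multilinear polynomial whose image on $\Z$-graded $UT_2$ maps onto $f(\mathcal{A})$; this is routine since each $G$-homogeneous variable of degree $g^k$ can be taken to a $\Z$-homogeneous variable of degree $k$, and the coarsening map sends that image onto the $G$-image.

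I expect the main (very minor) obstacle to be bookkeeping rather than mathematics: making precise the sense in which "$G$-graded $UT_2$ is a graded homomorphic image of $\Z$-graded $UT_2$" and matching up the variables of $f$ with $\Z$-graded variables, so that Proposition \ref{lowprop} genuinely applies and no information about the image is lost in translation. Once that is set up, everything reduces to results already proved: Theorem \ref{gradingsupper}, Proposition \ref{lowprop}, Corollary \ref{cnaturalUTn} with its following remark, and (for the trivial grading) the known image classification on $UT_2$.
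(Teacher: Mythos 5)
Your proposal is correct, and its skeleton (reduce to elementary gradings via Theorem \ref{gradingsupper} and Proposition \ref{lowprop}, then split on whether $\deg e_{12}$ is trivial) matches the paper's; the trivial-grading case is handled identically via Theorem \ref{TLuoWang}. Where you diverge is the nontrivial elementary grading. You route it through the Section \ref{sect4} machinery — Theorem \ref{maintheorem}/Corollary \ref{cnaturalUTn} when $g$ has order $2$, and the natural $\Z$-grading (via the remark after Corollary \ref{cnaturalUTn}) together with a coarsening along $\Z\to\langle g\rangle$ when $g$ has larger order. This works, since for $UT_2$ the support is $\{1,g\}$, the homogeneous components are literally the same subspaces under the relabelling, and any variable of degree outside the support forces $Im(f)=\{0\}$; but note that Proposition \ref{lowprop} as stated concerns two algebras graded by the \emph{same} group, so the coarsening step is an extension of it rather than a direct application, exactly the bookkeeping you flag. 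The paper avoids all of this: for the nontrivial grading it observes that $\mathcal{A}_g^2=\{0\}$ kills every polynomial with two or more odd variables, that purely neutral polynomials are settled by Lemma \ref{sumofcoeffi} and Proposition \ref{basicprop}(2), and that a polynomial with exactly one odd variable has image inside the one-dimensional space $\mathcal{A}_g$, whence Proposition \ref{basicprop}(1) finishes — no case split on the order of $g$ and no appeal to Section \ref{sect4}. Your approach buys uniformity with the $\Z_q$-grading results at the cost of invoking heavier theorems (and their mild field hypotheses, which happen to be vacuous for $n=2$); the paper's is shorter, works verbatim for arbitrary fields, and makes transparent why $UT_2$ is so much easier than $UT_n$.
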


\begin{proof}
By Theorem \ref{gradingsupper} and Proposition \ref{lowprop}, it is enough to consider images of multilinear graded polynomials on elementary gradings only. We note that just two elementary $G$-gradings can be defined on $\mathcal{A}=UT_{2}$. Indeed, an elementary grading on $UT_{2}$ is completely determined by the homogeneous degree of $e_{12}$. If $\deg(e_{12})=1$, then we have the trivial grading, and we apply Theorem \ref{TLuoWang}. Hence we assume  $\mathcal{A}_{1}=span\{e_{11},e_{22}\}$ and $\mathcal{A}_{g}=span\{e_{12}\}$, where $g\neq1$. In this grading the images of multilinear polynomials in neutral variables are handled by Lemma \ref{sumofcoeffi} and Proposition \ref{basicprop}(2). Since $\mathcal{A}_{g}^{2}=\{0\}$, it is enough to consider multilinear polynomials in one variable of homogeneous degree $g$ and all remaining variables of neutral degree. In this case the image is contained in $\mathcal{A}_{g}$ and by Proposition \ref{basicprop}(1) we are done.
\end{proof}

Now we prove an analogous fact to Theorem \ref{theoremUT2} with $\mathcal{A}=UT_{3}$ instead of $UT_{2}$. From now on in this subsection we assume that $\mathcal{A}$ is endowed with some elementary $G$-grading given by a tuple $(g_{1},g_{2})\in G^{2}$. Hence $g_{1}=\deg(e_{12})$, $g_{2}=\deg(e_{23})$, and $g_{3}:=g_{1}g_{2}=\deg(e_{13})$.

Hence the elementary gradings on $UT_{3}$ are exactly the ones given by following relations.

\begin{itemize}
\item[(I)] $\{1\}\cap \{g_{1},g_{2},g_{3}\}\neq \emptyset$.
\begin{itemize}
\item[(a)] $g_{1}=g_{2}=1$, which implies $g_{3}=1$;
\item[(b)] $g_{1}=1$, which implies $g_{2}=g_{3}$;
\item[(c)] $g_{2}=1$, which implies $g_{1}=g_{3}$;
\item[(d)] $g_{3}=1$ and $g_{1}=g_{2}$;
\item[(e)] $g_{3}=1$ and $g_{1}\neq g_{2}$.
\end{itemize}
\item[(II)] $\{1\}\cap \{g_{1},g_{2},g_{3}\}=\emptyset$.
\begin{itemize}
\item[(a)] $1,g_{1},g_{2},g_{3}$ are pairwise distinct elements;
\item[(b)] $g_{1}=g_{2}\neq g_{3}$.
\end{itemize}
\end{itemize}

In the following lemmas we discuss the grading on $UT_{3}$ determined by each relation above and the respective image of a multilinear graded polynomial on such a graded algebra. 

\begin{lem}\label{l1UT3}
Let $UT_{3}$ be endowed with the grading (I)(b). Then $Im(f)$ on $UT_{3}$ is a homogeneous subspace.
\end{lem}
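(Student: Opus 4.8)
The plan is to exploit the special structure of this grading. In case (I)(b) we have $\deg e_{12}=1$ and $\deg e_{23}=:g\neq1$, whence $\deg e_{13}=g$ as well. Thus $\mathcal{A}_1=span\{e_{11},e_{22},e_{33},e_{12}\}$ is the unital subalgebra isomorphic to $UT_2\times F$, $\mathcal{A}_g=span\{e_{13},e_{23}\}$, and $supp(\mathcal{A})=\{1,g\}$. Two multiplication facts will carry most of the argument: $\mathcal{A}\mathcal{A}_g\subseteq\mathcal{A}_g$ together with $\mathcal{A}_g^{2}=0$; and, in the basis $\{e_{13},e_{23}\}$ of $\mathcal{A}_g$, left multiplication by $P\in\mathcal{A}_1$ acts as the upper triangular matrix with rows $(P_{11},P_{12})$ and $(0,P_{22})$, while right multiplication by $P$ acts as the scalar $P_{33}$.

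Since $Im(f)=\{0\}$ whenever $f$ has a variable of degree outside $\{1,g\}$, I would argue according to the number $l$ of variables of $f$ of degree $g$. If $l=0$, then $Im(f)$ on $\mathcal{A}$ equals $Im(f)$ on the unital algebra $\mathcal{A}_1\cong UT_2\times F$; by Proposition \ref{basicprop}(2) this is all of $\mathcal{A}_1$ when the coefficients of $f$ sum to a nonzero scalar, and otherwise $f\in\langle[x_1,x_2]\rangle^{T}$ by Lemma \ref{sumofcoeffi}, so $Im(f)$ is contained in the two-sided ideal of $\mathcal{A}_1$ generated by $[\mathcal{A}_1,\mathcal{A}_1]$; as $[\mathcal{A}_1,\mathcal{A}_1]=Fe_{12}$ and $\mathcal{A}_1e_{12}\mathcal{A}_1=Fe_{12}$, that ideal is $Fe_{12}$, and Proposition \ref{basicprop}(1) leaves $\{0\}$ or $Fe_{12}$. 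If $l\geq2$, then every monomial of $f$, under any graded evaluation, is a product containing two factors in $\mathcal{A}_g$; by $\mathcal{A}\mathcal{A}_g\subseteq\mathcal{A}_g$ and $\mathcal{A}_g^{2}=0$ such a product is $0$, so $Im(f)=\{0\}$. In each of these subcases $Im(f)$ is a homogeneous subspace of $\mathcal{A}$.

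The case $l=1$ is the core of the proof. Write $f=f(y_1,\dots,y_{m-1},z)$ with $z$ the odd variable; every monomial of $f$ has the form $v_1zv_2$ with $v_1,v_2$ products of neutral variables, so for each choice of values for the neutral variables the map $z\mapsto f(y_1,\dots,y_{m-1},z)$ is a linear endomorphism $\lambda$ of $\mathcal{A}_g$, and $Im(f)=\bigcup\lambda(\mathcal{A}_g)$ over all such evaluations. By the second multiplication fact, each summand $v_1zv_2$ acts on $\mathcal{A}_g$ as a scalar times an upper triangular matrix, so (in the basis $\{e_{13},e_{23}\}$) $\lambda$ is upper triangular, say with $(1,1)$-entry $\alpha$, $(2,2)$-entry $\delta$ and $(1,2)$-entry $\beta$, these being polynomials in the entries of the evaluating matrices. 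Moreover $\alpha$ and $\delta$ do not involve the $(1,2)$-entries, and evaluating the neutral variables on diagonal matrices $a_ie_{11}+c_ie_{22}+d_ie_{33}$ one sees that $\alpha$ and $\delta$ are given by the same expression with $c_i$ playing the role of $a_i$; hence $\alpha\equiv0$ if and only if $\delta\equiv0$, since $\{a_i\}$ and $\{c_i\}$ are disjoint sets of indeterminates. If $\alpha\not\equiv0$, one may first fix values of the $d_i$ keeping $\alpha$ a nonzero multilinear polynomial in the $a_i$, then choose the $a_i$ and $c_i$ making $\alpha$ and $\delta$ simultaneously nonzero (a nonzero multilinear polynomial never vanishes identically, by Lemmas \ref{lcomutpoly} and \ref{lcomutpolyfinite}); for the resulting diagonal evaluation $\lambda$ is invertible, so $\lambda(\mathcal{A}_g)=\mathcal{A}_g$ and $Im(f)=\mathcal{A}_g$. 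If $\alpha\equiv\delta\equiv0$, then $\lambda(\mathcal{A}_g)\subseteq Fe_{13}$ for every evaluation, so $Im(f)\subseteq Fe_{13}$ and Proposition \ref{basicprop}(1) gives $\{0\}$ or $Fe_{13}$. In every case $Im(f)$ is a homogeneous subspace, and notably no restriction on $F$ is needed.

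I expect the only genuinely nonroutine step to be the case $l=1$: a priori $Im(f)$ could be a union of pairwise distinct lines inside the two-dimensional space $\mathcal{A}_g$, and what rules this out is precisely the pair of structural facts that $\lambda$ is always upper triangular and that its two diagonal entries vanish simultaneously. The cases $l=0$ and $l\geq2$, and the verification of the multiplication facts above, amount to routine bookkeeping on the multiplication table of this grading.
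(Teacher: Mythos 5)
Your proof is correct, and while the peripheral cases coincide with the paper's treatment, the core case is handled by a genuinely different argument. Like the paper, you dispose of polynomials in neutral variables via Lemma \ref{sumofcoeffi} and Proposition \ref{basicprop}, and of polynomials with two or more odd variables via $\mathcal{A}_g^2=0$ (your explicit use of $\mathcal{A}\mathcal{A}_g\subseteq\mathcal{A}_g$ and $\mathcal{A}_g\mathcal{A}\subseteq\mathcal{A}_g$ makes this step airtight). For the decisive case of one odd variable, the paper first rewrites $f$ modulo the graded identity $z[y_1,y_2]\equiv 0$ as $\sum h_{i_1,\dots,i_k}\,z\,y_{i_1}\cdots y_{i_k}$, splits according to whether some prefix polynomial $h$ has commutator degree $0$, and in that case realizes an arbitrary element of $\mathcal{A}_g$ by a single discrete evaluation with the idempotents $e_{11}+e_{22}$ and $e_{33}$ that kills all other terms. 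You instead view $f$, for fixed neutral entries, as a linear operator $\lambda$ on the two-dimensional space $\mathcal{A}_g$, observe that $\lambda$ is always upper triangular in the basis $\{e_{13},e_{23}\}$ with diagonal entries that are the same polynomial in disjoint sets of diagonal indeterminates, and conclude that either $\lambda$ is generically invertible (image $=\mathcal{A}_g$) or always strictly triangular (image $\subseteq Fe_{13}$). The two dichotomies agree --- your $\alpha\not\equiv 0$ is exactly the paper's condition that some $h$ has nonzero coefficient sum --- but your route bypasses both the normal form and the commutator-degree machinery, and it explains structurally why the image cannot be a union of distinct lines in $\mathcal{A}_g$. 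Since the only nonvanishing you need is that of a single multilinear commutative polynomial (Lemmas \ref{lcomutpoly} and \ref{lcomutpolyfinite} with degree bound $1$), your observation that no restriction on $F$ is required is also correct and matches the paper. The paper's evaluation, on the other hand, is shorter and identifies explicitly which monomials of $f$ are responsible for the image being all of $\mathcal{A}_g$.
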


\begin{proof}
We denote $g_{2}=g$, then we have $\mathcal{A}_{1}=span\{e_{11},e_{22},e_{33},e_{12}\}$ and $\mathcal{A}_{g}=span\{e_{13},e_{23}\}$. Note that $\mathcal{A}_{g}^{2}=\{0\}$ and hence we only need to analyse multilinear polynomials in at most one variable of homogeneous degree $g$. 

The case when $f$ is a multilinear polynomial in neutral variables is settled by Lemma \ref{sumofcoeffi} and Proposition \ref{basicprop}(2).

Now we consider $f$ as a multilinear polynomial in one non neutral variable and $m-1$ neutral ones. Since $\mathcal{A}$ satisfies the graded identity $z[y_{1},y_{2}]\equiv 0$, then modulo this identity we write $f$ as
\[
\sum_{1\leq i_{1}<\cdots< i_{k}\leq m-1}h_{i_{1},\dots,i_{k}}z_{m}y_{i_{1}}\cdots y_{i_{k}}.
\]
If all polynomials $h_{i_{1}, \dots, i_{k}}$ have commutator degree different from $0$, then $Im(f)\subset span\{e_{13}\}$ and then we apply Proposition \ref{basicprop}(2). Otherwise we may assume, without loss of generality, that $h_{1,\dots,k}$ has commutator degree $0$. Then we perform the following evaluations: $y_{1}=\cdots=y_{k}=e_{33}$, $y_{j}=e_{11}+e_{22}$ for every $j\notin\{1,\dots,k\}$, and $z_{m}=\alpha^{-1}(a_{1}e_{13}+a_{2}e_{23})$, where $\alpha$ is the sum of the coefficients of $h_{1,\dots,k}$. Note that under such an evaluation we have $a_{1}e_{13}+a_{2}e_{23}\in Im(f)$ which proves that $Im(f)=\mathcal{A}_{g}$.
\end{proof}

\begin{lem}
Let $UT_{3}$ be endowed with the grading (I)(c). Then $Im(f)$ on $UT_{3}$ is a homogeneous subspace.
\end{lem}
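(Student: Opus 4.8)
The plan is to mirror the argument for grading (I)(b) in Lemma \ref{l1UT3}, adapting it to the new support. Here $g_{2}=1$, so setting $g:=g_{1}=g_{3}$ we have $\mathcal{A}_{1}=\mathrm{span}\{e_{11},e_{22},e_{33},e_{23}\}$ and $\mathcal{A}_{g}=\mathrm{span}\{e_{12},e_{13}\}$. As before $\mathcal{A}_{g}^{2}=\{0\}$ (the product of any two matrices among $e_{12},e_{13}$ vanishes), so it suffices to treat $f$ multilinear in at most one non-neutral variable. If $f$ has no non-neutral variable, then $f=f(y_{1},\dots,y_{m})$ has image in the neutral component; since $1\in\mathcal{A}$, Lemma \ref{sumofcoeffi} together with Proposition \ref{basicprop}(2) shows $Im(f)$ is either $\{0\}$ (when the coefficient sum is zero) or all of $\mathcal{A}_{1}$, hence homogeneous.

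For the remaining case, let $f=f(z_{m},y_{1},\dots,y_{m-1})$ with $\deg(z_{m})=g$. Now the relevant graded identity is $[y_{1},y_{2}]z\equiv0$ (the mirror of the one used in (I)(b)), since a commutator of diagonal-plus-$e_{23}$ matrices kills $e_{12}$ and $e_{13}$ on the left. Modulo this identity, write
\[
f=\sum_{1\le i_{1}<\cdots<i_{k}\le m-1} y_{i_{1}}\cdots y_{i_{k}}\, z_{m}\, h_{i_{1},\dots,i_{k}},
\]
where each $h_{i_{1},\dots,i_{k}}$ is a linear combination of orderings of the remaining neutral variables. If every $h_{i_{1},\dots,i_{k}}$ has commutator degree $\ge1$, then every monomial of $f$, after substituting the generic non-neutral and neutral matrices, produces only $e_{13}$ (the trailing commutator annihilates $e_{12}$ but may survive on $e_{13}$ — more precisely the image lands in $\mathrm{span}\{e_{13}\}$, a one-dimensional homogeneous subspace), and we conclude by Proposition \ref{basicprop}(1). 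Otherwise, after relabelling we may assume $h_{1,\dots,k}$ has commutator degree $0$, so its coefficient sum $\alpha$ is nonzero. Evaluate $y_{1}=\cdots=y_{k}=e_{11}$, $y_{j}=e_{22}+e_{33}$ for $j\notin\{1,\dots,k\}$, and $z_{m}=\alpha^{-1}(a_{1}e_{12}+a_{2}e_{13})$; a direct check shows this kills all summands of $f$ except the one indexed by $\{1,\dots,k\}$ (the others have a neutral variable from $\{2,\dots\}$ acting as $e_{22}+e_{33}$ in a position that forces a zero product with the placement of the $y_{1}\cdots y_{k}$ block), and the surviving term contributes $a_{1}e_{12}+a_{2}e_{13}$. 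Hence $Im(f)=\mathcal{A}_{g}$, which is homogeneous.

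The main obstacle is the bookkeeping in the last paragraph: verifying that the chosen evaluation isolates the single monomial $y_{1}\cdots y_{k}z_{m}h_{1,\dots,k}$ and that $h_{1,\dots,k}$ evaluates (via its nonzero coefficient sum, using commutator degree $0$) to a nonzero scalar times $e_{11}$. This is the same phenomenon exploited in Lemma \ref{l1UT3}, just with the roles of the left and right ends of the monomials interchanged, so no genuinely new idea is needed — one only has to be careful that the idempotents $e_{11}$ and $e_{22}+e_{33}$ are placed so that exactly the desired block pattern survives and all competing terms die.
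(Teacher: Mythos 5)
Your proof is correct and follows exactly the route the paper intends: the paper's own proof of this case simply notes the identities $[y_{1},y_{2}]z\equiv0$ and $z_{1}z_{2}\equiv0$ and declares the argument "similar to the one for (I)(b)", and your write-up is precisely that mirrored argument (ordered neutral block on the left, $h$ on the right, the evaluation $y_{1}=\cdots=y_{k}=e_{11}$, $y_{j}=e_{22}+e_{33}$ isolating the distinguished monomial). The details you supply — the image landing in $\mathrm{span}\{e_{13}\}$ when all the $h$'s have positive commutator degree, and the bookkeeping showing the competing terms vanish — all check out.
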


\begin{proof}
Note that $\mathcal{A}_{1}=span\{e_{11},e_{22},e_{33},e_{23}\}$, $\mathcal{A}_{g_{1}}=span\{e_{12},e_{13}\}$ and also that $\mathcal{A}$ satisfies the identities $[y_{1},y_{2}]z\equiv0$ and $z_{1}z_{2}\equiv0$. Thus, the proof is similar to the one for (I)(b).
\end{proof}

\begin{lem}
Let $UT_{3}$ be endowed with the grading (I)(e). Then $Im(f)$ on $UT_{3}$ is a homogeneous subspace.
\end{lem}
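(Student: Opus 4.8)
The plan is a short case analysis based on the structure of the grading. In the grading (I)(e) we have $g_{3}=1$, $g_{1}\neq g_{2}$, and — since (I)(e) excludes (I)(b) and (I)(c) — also $g_{1}\neq 1\neq g_{2}$; from $g_{1}g_{2}=g_{3}=1$ we get $g_{2}=g_{1}^{-1}$. Hence $\mathcal{A}_{1}=span\{e_{11},e_{22},e_{33},e_{13}\}$, $\mathcal{A}_{g_{1}}=span\{e_{12}\}$, $\mathcal{A}_{g_{2}}=span\{e_{23}\}$, and $supp(\mathcal{A})=\{1,g_{1},g_{1}^{-1}\}$ generates the abelian group $\langle g_{1}\rangle$. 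By Proposition \ref{basicprop}(4), $Im(f)$ is contained in a single homogeneous component $\mathcal{A}_{g}$ with $g\in\langle g_{1}\rangle$; if $g\notin supp(\mathcal{A})$ then $Im(f)=\{0\}$, so we may assume $g\in\{1,g_{1},g_{2}\}$.

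If $g=g_{1}$ or $g=g_{2}$, then $Im(f)$ lies in a one-dimensional subspace, and since $f$ is multilinear we have $\lambda\,Im(f)\subset Im(f)$ for every $\lambda\in F$; hence Proposition \ref{basicprop}(1) gives that $Im(f)$ is either $\{0\}$ or the whole component.

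Assume now $g=1$. First suppose every variable of $f$ is neutral, so $Im(f)\subset\mathcal{A}_{1}$. The subspace $\mathcal{A}_{1}$ is a subalgebra isomorphic to $UT_{2}\times F$ (the factor $UT_{2}$ realised on $span\{e_{11},e_{13},e_{33}\}$, the factor $F$ on the central idempotent $e_{22}$), with $Jac(\mathcal{A}_{1})=span\{e_{13}\}$. Applying Theorem \ref{TLuoWang} to the $UT_{2}$-block (its cardinality hypothesis is vacuous in size $2$) together with Lemma \ref{sumofcoeffi} for the one-dimensional factor, we obtain that $Im(f)=f(\mathcal{A}_{1})$ equals $\mathcal{A}_{1}$, $span\{e_{13}\}$, or $\{0\}$ according as the commutator degree of $f$ is $0$, $1$, or at least $2$ — a homogeneous subspace in every case. (Equivalently one may invoke Lemma \ref{lblock}.) Next suppose some variable of $f$ is non-neutral. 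If $f$ had two variables of the same non-neutral degree, then every monomial of $f$ would vanish on $\mathcal{A}$, since $e_{12}\mathcal{A}_{1}e_{12}=0$ and $e_{23}\mathcal{A}_{1}e_{23}=0$, whence $f\in Id^{gr}(\mathcal{A})$ and $Im(f)=\{0\}$. Otherwise $f$ has exactly one variable $z_{1}$ of degree $g_{1}$ and exactly one variable $z_{2}$ of degree $g_{2}$, the remaining variables being neutral. Any monomial of $f$ in which $z_{2}$ precedes $z_{1}$ vanishes on $\mathcal{A}$, because $e_{23}\,a\,e_{12}=0$ for every $a\in\mathcal{A}_{1}$; and in any monomial in which $z_{1}$ precedes $z_{2}$, evaluating the neutral variables on $\mathcal{A}_{1}$ forces (up to scalars) the chain to pass through $e_{11},e_{12},e_{23},e_{33}$, so the monomial evaluates to a scalar multiple of $e_{12}e_{23}=e_{13}$. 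Hence $Im(f)\subset span\{e_{13}\}$, and Proposition \ref{basicprop}(1) gives $Im(f)\in\{\{0\},span\{e_{13}\}\}$.

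In every case $Im(f)$ is a homogeneous subspace, which proves the lemma. I expect the only non-routine point to be the neutral case with $g=1$: one must recognise $\mathcal{A}_{1}\cong UT_{2}\times F$ in order to reduce to the already known description of images on upper triangular matrix algebras. All remaining cases are immediate from the one-dimensionality of $\mathcal{A}_{g_{1}}$ and $\mathcal{A}_{g_{2}}$ together with the observation that any monomial involving both a degree-$g_{1}$ and a degree-$g_{2}$ variable collapses into $Fe_{13}$.
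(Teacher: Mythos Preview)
Your proof is correct and follows essentially the same route as the paper's: identify the components, note that $\mathcal{A}_{g_1}$ and $\mathcal{A}_{g_2}$ are one-dimensional so Proposition~\ref{basicprop}(1) handles those images, observe that any monomial involving both a $g_1$- and a $g_2$-variable lands in $Fe_{13}$, and treat the purely neutral case via the known description of images on $UT_2$. Your write-up is in fact more careful than the paper's in two places: you explicitly dispose of polynomials with two variables of the same non-neutral degree (the paper leaves this implicit in the relations $\mathcal{A}_{g_1}^2=\mathcal{A}_{g_2}^2=0$), and you correctly invoke Proposition~\ref{basicprop}(1) for the one-dimensional cases, whereas the paper's cross-reference to item~(2) there appears to be a typo. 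One very minor quibble: your parenthetical ``equivalently one may invoke Lemma~\ref{lblock}'' is not quite safe as stated, since that lemma carries the hypothesis $|F|\ge n(n-1)/2$ with $n=3$; your direct appeal to Theorem~\ref{TLuoWang} on the $UT_2$-block (where the cardinality condition is vacuous) is the right move and already suffices.
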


\begin{proof}
Here we must have $\mathcal{A}_{1}=span\{e_{11},e_{22},e_{33},e_{13}\}$,  $\mathcal{A}_{g_{1}}=span\{e_{12}\}$, $\mathcal{A}_{g_{2}}=span\{e_{23}\}$. Note that $\mathcal{A}_{g_{1}}^{2}=\mathcal{A}_{g_{2}}^{2}=\{0\}$,  $\mathcal{A}_{g_{2}}\mathcal{A}_{g_{1}}=\{0\}$, and $\mathcal{A}_{g_{1}}\mathcal{A}_{g_{2}}\subset span\{e_{13}\}$.  

The case when $f$ is a multilinear polynomial in neutral variables can be treated as in the grading (I)(b). 
Hence we may consider $f$ is a multilinear polynomial in: one variable of degree $g_{1}$ (respectively $g_{2}$) and $m-1$ neutral variables, or in one variable of degree $g_{1}$, one of degree $g_{2}$ and $m-2$ neutral ones. In each of these situations we have that $Im(f)$ is contained in a 
one-dimensional space and we apply Proposition \ref{basicprop}(2). 
\end{proof}

\begin{lem}
Let $UT_{3}$ be endowed with the grading (II)(a). Then $Im(f)$ on $UT_{3}$ is a homogeneous subspace.
\end{lem}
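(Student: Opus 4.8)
The plan is to analyze the grading (II)(a) on $UT_3$ directly, where $1, g_1, g_2, g_3$ are pairwise distinct. Here the homogeneous components are $\mathcal{A}_1 = span\{e_{11}, e_{22}, e_{33}\}$, $\mathcal{A}_{g_1} = span\{e_{12}\}$, $\mathcal{A}_{g_2} = span\{e_{23}\}$, and $\mathcal{A}_{g_3} = span\{e_{13}\}$. The crucial structural observation is that every non-neutral homogeneous component is one-dimensional, and moreover the products of distinct odd components are very restricted: $\mathcal{A}_{g_1}^2 = \mathcal{A}_{g_2}^2 = \mathcal{A}_{g_3}^2 = \{0\}$, $\mathcal{A}_{g_2}\mathcal{A}_{g_1} = \mathcal{A}_{g_3}\mathcal{A}_{g_1} = \mathcal{A}_{g_2}\mathcal{A}_{g_3} = \mathcal{A}_{g_3}\mathcal{A}_{g_2} = \{0\}$, and the only nonzero product of two distinct odd components is $\mathcal{A}_{g_1}\mathcal{A}_{g_2} \subseteq span\{e_{13}\} = \mathcal{A}_{g_3}$.

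First I would dispose of the case where $f$ is a multilinear polynomial in neutral variables only: here $\mathcal{A}_1$ is the diagonal, which has a unit-like behaviour after applying Lemma \ref{sumofcoeffi} and Proposition \ref{basicprop}(2) (if the sum of coefficients is zero then $f$ is a consequence of a commutator and vanishes on the commutative algebra $\mathcal{A}_1$, so $Im(f) = \{0\}$; otherwise $Im(f) = \mathcal{A}_1$). Next I would enumerate the possible multisets of degrees of the non-neutral variables occurring in $f$. Because squares of odd components vanish and the only surviving mixed product is $\mathcal{A}_{g_1}\mathcal{A}_{g_2}$, the only admissible configurations with at least one odd variable are: exactly one variable of degree $g_1$ (rest neutral), exactly one of degree $g_2$ (rest neutral), exactly one of degree $g_3$ (rest neutral), or exactly one of degree $g_1$ together with exactly one of degree $g_2$ (rest neutral). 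In every one of these cases $Im(f)$ is contained in a one-dimensional homogeneous subspace ($\mathcal{A}_{g_1}$, $\mathcal{A}_{g_2}$, or $\mathcal{A}_{g_3}$, the last one for both the degree-$g_3$ case and the mixed $g_1,g_2$ case). Then Proposition \ref{basicprop}(1) applies: $Im(f)$ is either $\{0\}$ or the full one-dimensional component, hence a homogeneous subspace.

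I do not expect a serious obstacle here; the argument is essentially a case check closely parallel to the proof of Lemma for grading (I)(e), with the mixed $g_1, g_2$ case being the only one that needs a sentence of justification that $\lambda \, Im(f) \subseteq Im(f)$ (which follows from multilinearity in, say, the degree-$g_1$ variable, so the hypothesis of Proposition \ref{basicprop}(1) is met). If anything, the only mild care needed is to make sure one has listed all admissible degree configurations; this follows immediately from the multiplication table above since any product of two odd variables lands either in zero or in $\mathcal{A}_{g_3}$, and a product of three or more odd variables is automatically zero.
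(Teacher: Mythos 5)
Your proof is correct and follows essentially the same route as the paper: dispose of the all-neutral case via Lemma~\ref{sumofcoeffi} and Proposition~\ref{basicprop}, then observe that every admissible configuration of odd variables forces $Im(f)$ into a one-dimensional homogeneous component, where Proposition~\ref{basicprop}(1) finishes. In fact your multiplication table is the correct one --- the paper's assertion that $\mathcal{A}_{g_1}\mathcal{A}_{g_3}=\mathcal{A}_{g_2}$ is evidently a typo for $\mathcal{A}_{g_1}\mathcal{A}_{g_2}=\mathcal{A}_{g_3}$ (since $e_{12}e_{13}=0$ while $e_{12}e_{23}=e_{13}$) --- and your explicit remark that $\lambda\,Im(f)\subseteq Im(f)$ follows from multilinearity supplies the hypothesis of Proposition~\ref{basicprop}(1) that the paper leaves implicit.
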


\begin{proof}
We have $\mathcal{A}_{1}=span\{e_{11},e_{22},e_{33}\}$, $\mathcal{A}_{g_{1}}=span\{e_{12}\}$, $\mathcal{A}_{g_{2}}=span\{e_{23}\}$, and $\mathcal{A}_{g_{3}}=span\{e_{13}\}$. The only non trivial relation among the non neutral homogeneous components is given by $\mathcal{A}_{g_{1}}\mathcal{A}_{g_{3}}=\mathcal{A}_{g_{2}}$. 

The case of $f$ in neutral variables is the same as for the grading (I)(b).

Since the non neutral components are one-dimensional, then the image of a multilinear polynomial in one non neutral variable and $m-1$ neutral ones is always zero or the respective homogeneous component. 

In case $f$ has one variable of homogeneous degree $g_{1}$, one of degree $g_{3}$ and $m-2$ neutral ones then the image is contained in $\mathcal{A}_{g_{2}}$, and we are done.
\end{proof}

\begin{lem}
Let $UT_{3}$ be endowed with the grading (II)(b). Then $Im(f)$ on $UT_{3}$ is a homogeneous subspace.
\end{lem}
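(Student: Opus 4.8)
In this grading we have $g_1 = g_2 \neq g_3$ with $1, g_1, g_3$ pairwise distinct (note $g_3 = g_1^2 \neq 1$, since otherwise we are in case (I)(d)), and moreover $g_1 \neq g_3$. The homogeneous components are $\mathcal{A}_1 = span\{e_{11}, e_{22}, e_{33}\}$, $\mathcal{A}_{g_1} = span\{e_{12}, e_{23}\}$, and $\mathcal{A}_{g_3} = span\{e_{13}\}$. The products among non-neutral components reduce to $\mathcal{A}_{g_1}\mathcal{A}_{g_1} \subset \mathcal{A}_{g_3} = span\{e_{13}\}$ and $\mathcal{A}_{g_1}\mathcal{A}_{g_3} = \mathcal{A}_{g_3}\mathcal{A}_{g_1} = \{0\}$, so that any product of three non-neutral elements vanishes and $\mathcal{A}_{g_3}$ is a square-zero ideal. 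The plan is to split $f$ according to how many non-neutral variables it contains and which degrees those variables carry.

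First, if $f$ is in neutral variables only, then $Im(f) \subset \mathcal{A}_1 = span\{e_{11}, e_{22}, e_{33}\}$, which is the diagonal, so $Im(f)$ is handled exactly as in grading (I)(b): Lemma~\ref{sumofcoeffi} together with Proposition~\ref{basicprop}(2) gives either $Im(f) = \{0\}$ (when the sum of coefficients is zero) or $Im(f) = \mathcal{A}_1$. Next, if $f$ involves exactly one variable of degree $g_3$ (and the rest neutral), then $Im(f) \subset span\{e_{13}\}$, a one-dimensional space, and Proposition~\ref{basicprop}(1) finishes the case. Similarly, if $f$ has two or more variables of degree $g_1$, or one variable of degree $g_1$ together with one of degree $g_3$, or any configuration forcing three non-neutral variables, then $Im(f) = \{0\}$ because of the square-zero relations just noted. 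Finally, if $f$ has exactly two variables of degree $g_1$ (and the rest neutral), then $Im(f) \subset \mathcal{A}_{g_1}\mathcal{A}_{g_1} \subset span\{e_{13}\}$, again one-dimensional and settled by Proposition~\ref{basicprop}(1).

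The remaining and only genuinely two-dimensional case is when $f$ has exactly one variable of degree $g_1$ and $m-1$ neutral variables, so that $Im(f) \subset \mathcal{A}_{g_1} = span\{e_{12}, e_{23}\}$. Here I would argue as in Lemma~\ref{l1UT3}: the algebra satisfies a graded identity of the form $z[y_1,y_2] \equiv 0$ or $[y_1,y_2]z \equiv 0$ (one checks directly that for the non-neutral variable $z$ of degree $g_1$ we have $z[y_1,y_2] = 0$ since $z[y_1,y_2]$ lands in $\mathcal{A}_{g_1}\mathcal{A}_1 \cdot \mathcal{A}_1$ but the diagonal part of $[y_1,y_2]$ vanishes and its strictly-upper part kills $z$ on the relevant entries), so modulo this identity $f$ can be written as $\sum_{1 \le i_1 < \cdots < i_k \le m-1} h_{i_1,\dots,i_k}\, z_m y_{i_1} \cdots y_{i_k}$ with each coefficient $h$ an associative polynomial in the remaining neutral variables. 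If every $h_{i_1,\dots,i_k}$ has positive commutator degree, then $Im(f)$ collapses into a one-dimensional subspace of $\mathcal{A}_{g_1}$ and we are done by Proposition~\ref{basicprop}(1); otherwise some $h$ has commutator degree $0$, and by specializing the corresponding neutral variables to suitable diagonal idempotents (mimicking the evaluations $y_{i} = e_{33}$ on the relevant block, $y_j = e_{11}+e_{22}$ elsewhere) and $z_m = \alpha^{-1}(a_1 e_{12} + a_2 e_{23})$ with $\alpha$ the nonzero sum of coefficients of $h$, one realizes an arbitrary element of $\mathcal{A}_{g_1}$ in the image, so $Im(f) = \mathcal{A}_{g_1}$.

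The main obstacle is pinning down precisely which graded identity of commutator type $\mathcal{A}$ satisfies in this grading and verifying it is strong enough to put $f$ in the required normal form; once that is in hand, the case analysis above is routine and mirrors the earlier lemmas for gradings (I)(b) and (I)(c). In all cases $Im(f)$ is $\{0\}$, a one-dimensional homogeneous component, or a full homogeneous component, hence a homogeneous subspace.
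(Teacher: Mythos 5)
Your case analysis by the degrees of the non-neutral variables is the right skeleton, and the easy cases (all variables neutral; image landing in the one-dimensional spaces $span\{e_{13}\}$ or $\mathcal{A}_{g_3}$) are disposed of correctly, modulo a self-corrected slip where you first assert that two variables of degree $g_1$ force $Im(f)=\{0\}$ and then, correctly, that the image lies in $\mathcal{A}_{g_1}^2\subset span\{e_{13}\}$. The genuine gap is in the only two-dimensional case, one variable $z_m$ of degree $g_1$ and $m-1$ neutral ones. First, the normal form is not the obstacle you fear: here $\mathcal{A}_1$ is the diagonal, so $[y_1,y_2]\equiv 0$ holds outright and each coefficient $h_{i_1,\dots,i_k}$ collapses to a scalar times an ordered product; the real content is the realization step. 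Your proposed evaluation, imported from Lemma \ref{l1UT3}, fails here for a structural reason: in grading (I)(b) the odd component is $span\{e_{13},e_{23}\}$, whose basis elements share column index $3$ and have row indices in $\{1,2\}$, so a single pair of idempotents ($e_{11}+e_{22}$ on the left, $e_{33}$ on the right) simultaneously preserves the distinguished monomial and annihilates every other one. In (II)(b) the odd component is $span\{e_{12},e_{23}\}$, and no such evaluation exists: with $z_m=a_1e_{12}+a_2e_{23}$ one has $(e_{11}+e_{22})z_m e_{33}=a_2e_{23}$, so your evaluation already loses the $e_{12}$-component of the target; and the other monomials do not vanish, since for instance $(e_{22}+e_{33})z_m=a_2e_{23}\neq 0$ and $z_m(e_{11}+e_{22})=a_1e_{12}\neq 0$. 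So neither "isolate one monomial" nor "realize an arbitrary element of $\mathcal{A}_{g_1}$" is achieved.

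The paper's proof avoids this by evaluating generically: $y_i=w_1^{(i)}e_{11}+e_{22}+w_3^{(i)}e_{33}$ and $z_m=w_1^{(m)}e_{12}+w_2^{(m)}e_{23}$, which yields
\[
f=p_1\bigl(w_1^{(1)},\dots,w_1^{(m-1)}\bigr)\,w_1^{(m)}e_{12}+p_2\bigl(w_3^{(1)},\dots,w_3^{(m-1)}\bigr)\,w_2^{(m)}e_{23},
\]
where $p_1$ and $p_2$ are multilinear commutative polynomials whose coefficients are exactly the coefficients $\alpha_{i_1,\dots,i_{j-1}}$ of $f$ in its normal form. Because $p_1$ and $p_2$ involve disjoint sets of variables (the trick of putting $1$ in the $(2,2)$ entry), they can be made nonzero simultaneously; a short triangular induction shows each is nonzero as a function on any field once some $\alpha$ is nonzero, and then $w_1^{(m)},w_2^{(m)}$ realize an arbitrary element of $\mathcal{A}_{g_1}$. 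To repair your argument you would need to replace the idempotent evaluation by something of this kind; as written, the key step is unjustified and, taken literally, produces only multiples of $e_{23}$.
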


\begin{proof}
Note that $\mathcal{A}_{1}=span\{e_{11},e_{22},e_{33}\}$, $\mathcal{A}_{g_{1}}=span\{e_{12},e_{23}\}$ and $\mathcal{A}_{g_{3}}=span\{e_{13}\}$. We   only need to consider the case when $f$ is a multilinear polynomial in $m-1$ neutral variables and one of homogeneous degree $g_{1}$, since the remaining cases can be treated as above. We write $f=\displaystyle\sum_{j=1}^{m}f_{j}$ where $f_{j}$ is the sum of all monomials from $f$ which contain the variable $z_{m}$ in the $j$-th position. Hence, modulo $[y_{1},y_{2}]\equiv 0$ we have
\[
f_{j}=\sum_{1\leq i_{1}<\cdots<i_{j-1}\leq m-1}\alpha_{i_{1},\dots,i_{j-1}}y_{i_{1}}\cdots y_{i_{j-1}}z_{m}y_{k_{1}}\cdots y_{k_{m-j}}
\]
where $k_{1}<\cdots<k_{m-j}$. We evaluate $y_{i}=w_{1}^{(i)}e_{11}+e_{22}+w_{3}^{(i)}e_{33}$ and $z_{m}=w_{1}^{(m)}e_{12}+w_{2}^{(m)}e_{23}$. Thus $f(y_{1},\dots,y_{m-1},z_{m})$ is given by
\[
\begin{pmatrix}
     0&p_{1}(w_{1}^{(1)},\dots,w_{1}^{(m-1)})w_{1}^{(m)} & 0\\
     &0 &p_{2}(w_{3}^{(1)},\dots,w_{3}^{(m-1)})w_{2}^{(m)} \\
     & &0
\end{pmatrix}
\]
where $p_{1}(w_{1}^{(1)},\dots,w_{1}^{(m-1)})=\displaystyle\sum_{j=1}^{m}\sum_{1\leq i_{1}<\cdots<i_{j-1}\leq m-1}\alpha_{i_{1},\dots,i_{j-1}}w_{1}^{(i_{1})}\cdots w_{1}^{(i_{j-1})}$ and $p_{2}$ is given analogously.  

We claim that $p_{1}$ takes nonzero values on $F$. Indeed, assume that $p_{1}$ is a polynomial identity for $F$ and denote $e_{j}=\displaystyle\sum_{1\leq i_{1}<\cdots< i_{j-1}\leq m-1}\alpha_{i_{1},\dots,i_{j-1}}w_{1}^{(i_{1})}\cdots w_{1}^{(i_{j-1})}$. Note that $e_{1}\in F$ and taking $w_{1}^{(1)}=\cdots=w_{1}^{(m-1)}=0$ we have $e_{1}=0$. Taking $w_{1}^{(l)}=1$ and zero for the remaining values of $w_{1}$ we have $\alpha_{l}=0$ for all $l\in\{1,\dots,m-1\}$ and hence $e_{2}=0$. Now assume $e_{l}=0$ for all $l<k$, we shall prove that $e_{k}=0$. For each chosen $i_{1}$, \dots, $i_{k-1}$ we take $w_{1}^{(r)}=0$ for all $r\notin\{i_{1},\dots,i_{k-1}\}$, then  $e_{l}=0$ for all $l>k$ and $e_{k}=\alpha_{i_{1},\dots,i_{k-1}}w_{1}^{(i_{1})}\cdots w_{1}^{(i_{k-1})}$. Then we take $w_{1}^{(i_{1})}=\cdots=w_{1}^{(i_{k-1})}=1$ and we conclude that $\alpha_{i_{1},\dots,i_{k-1}}=0$. Hence $p_{1}=0$, which is a contradiction. Analogous claim holds for $p_{2}$. Therefore it is enough to use the variables $w_{1}^{(m)}$ and $w_{2}^{(m)}$ to realize any matrix in $\mathcal{A}_{g_{1}}$ in the image of $f$. 
\end{proof}

\begin{lem}
Let $UT_{3}$ be endowed with the grading (I)(d). Then $Im(f)$ on $UT_{3}$ is a homogeneous subspace.
\end{lem}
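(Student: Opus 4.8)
In this case $g_3 = 1$ and $g_1 = g_2 =: g$ with $g \neq 1$. So $\mathcal{A}_1 = \mathrm{span}\{e_{11}, e_{22}, e_{33}, e_{13}\}$ and $\mathcal{A}_g = \mathrm{span}\{e_{12}, e_{23}\}$. Note $\mathcal{A}_g \mathcal{A}_g \subseteq \mathrm{span}\{e_{13}\} \subseteq \mathcal{A}_1$ and $\mathcal{A}_g \mathcal{A}_g \mathcal{A}_g = \{0\}$, so the support is $\{1, g\}$ and the grading is by an abelian group; by Proposition \ref{basicprop}(4) the image of a multilinear $f$ is automatically homogeneous, and what must be shown is that it is a \emph{subspace}. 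The plan is to split into three cases by how many non-neutral variables occur in $f$: zero, one, or two. (Three or more non-neutral variables forces $f \equiv 0$ on $\mathcal{A}$ since $\mathcal{A}_g^3 = 0$.)

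\textbf{Zero or one non-neutral variable.} If $f = f(y_1,\dots,y_m)$ is in neutral variables only, then $1 \in \mathcal{A}_1$ and we apply Lemma \ref{sumofcoeffi} together with Proposition \ref{basicprop}(2): either the sum of coefficients is nonzero and $Im(f) = \mathcal{A}_1$, or $f \in \langle[x_1,x_2]\rangle^T$, and then a direct computation (or Lemma \ref{lblock} applied to the block structure of $\mathcal{A}_1 \cong UT_2 \times F$, noting the $e_{13}$ block) shows the image is a homogeneous subspace — indeed $\mathcal{A}_1$ has a $UT_2$-block on positions $\{1,3\}$ and a $1\times 1$ block on position $2$, so $f(\mathcal{A}_1) = J^r$ for the appropriate $r$. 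If $f$ has exactly one non-neutral variable $z$ (of degree $g$), then $Im(f) \subseteq \mathcal{A}_g$, which is two-dimensional, so I cannot immediately invoke Proposition \ref{basicprop}(1); instead I mimic the argument in Lemma \ref{l1UT3}. Modulo the identity $z[y_1,y_2] \equiv 0$ — valid here since $\mathcal{A}_g \cdot [\mathcal{A}_1, \mathcal{A}_1] \subseteq \mathcal{A}_g \cdot \mathrm{span}\{e_{13}\} = 0$ — wait, more carefully, the relevant identity should be checked: $[y_1,y_2]$ evaluates into $\mathrm{span}\{e_{13}\}$ on $\mathcal{A}_1$, and $e_{13}\mathcal{A}_g = 0 = \mathcal{A}_g e_{13}$, so both $[y_1,y_2]z \equiv 0$ and $z[y_1,y_2]\equiv 0$ hold. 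Using one of these, write $f$ with all neutral variables collected on one side, argue as in Lemma \ref{l1UT3} that some coefficient-sum is nonzero (otherwise $f$ is a consequence of a commutator, contradicting multilinearity/choice), and then evaluate the neutral variables by idempotents to produce both $e_{12}$ and $e_{23}$ in the image — concluding $Im(f) = \mathcal{A}_g$.

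\textbf{Two non-neutral variables.} This is the case I expect to be the genuine obstacle. Here $f = f(z_1, z_2, y_3,\dots,y_m)$ with $\deg z_1 = \deg z_2 = g$, so $Im(f) \subseteq \mathcal{A}_g \mathcal{A}_g = \mathrm{span}\{e_{13}\}$, a one-dimensional space. By Proposition \ref{basicprop}(1) it suffices to show $Im(f)$ is closed under scalar multiplication, which is automatic for a multilinear polynomial (scale one variable), so in fact $Im(f)$ is $\{0\}$ or $\mathrm{span}\{e_{13}\}$ — done immediately. The only subtlety is double-checking that $\mathcal{A}_g \mathcal{A}_g \subseteq \mathrm{span}\{e_{13}\}$: indeed $e_{12}e_{23} = e_{13}$, $e_{12}e_{12} = 0$, $e_{23}e_{12} = 0$, $e_{23}e_{23} = 0$, confirming the claim. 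So this case follows directly from Proposition \ref{basicprop}(1).

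\textbf{Main obstacle and remark.} The one place requiring real care is the single-non-neutral-variable case, where the target component $\mathcal{A}_g$ is two-dimensional so Proposition \ref{basicprop}(1) does not apply; there I need to genuinely produce two linearly independent matrices $e_{12}$ and $e_{23}$ in the image using the structure of $f$ modulo $z[y_1,y_2]\equiv 0$, exactly parallel to Lemma \ref{l1UT3}. I would also want to verify carefully that $z[y_1,y_2] \equiv 0$ (or the left-handed version) really is a graded identity of this grading before using it to normalize $f$; given $\mathcal{A}_g e_{13} = e_{13}\mathcal{A}_g = 0$ this holds, but it is worth stating explicitly. Everything else reduces to prior results in the paper.
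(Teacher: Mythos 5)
Your case split (zero, one, or two variables of degree $g$, with three or more giving zero since $\mathcal{A}_g\mathcal{A}_g\mathcal{A}_g=0$) matches the paper's, and the zero- and two-odd-variable cases are correct: in each the image lands in the one-dimensional space $span\{e_{13}\}$ (when the coefficient sum vanishes, resp.\ always), so Proposition \ref{basicprop}(1) or (2) finishes; you do not even need Lemma \ref{lblock} (which would impose a field-size hypothesis) since $Jac(\mathcal{A}_1)=span\{e_{13}\}$ is one-dimensional. The genuine gap is precisely in the one-odd-variable case that you flagged, and your proposed fix does not close it, for two reasons. First, exhibiting $e_{12}$ and $e_{23}$ separately in $Im(f)$ does not give $Im(f)=\mathcal{A}_g$: the image is only a set at this point, so you must realize every $a_1e_{12}+a_2e_{23}$ by a \emph{single} evaluation. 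Second, the idempotent trick of Lemma \ref{l1UT3} does not transfer to (I)(d). In (I)(b) the chosen term $h\,z_m\,y_{i_1}\cdots y_{i_k}$ is isolated because $z_m(e_{11}+e_{22})=0$ annihilates every competing term, while $e_{11}+e_{22}$ on the left and $e_{33}$ on the right act as the identity on all of $\mathcal{A}_g=span\{e_{13},e_{23}\}$ at once. In (I)(d), after reducing $f$ modulo $[y_1,y_2]z\equiv0$ and $z[y_1,y_2]\equiv0$ to $\sum_S\alpha_S Y_S z_m Y_{S^c}$, the one-sided identities on $\mathcal{A}_g=span\{e_{12},e_{23}\}$ are $e_{11}+e_{22}$ (left) and $e_{22}+e_{33}$ (right); these overlap in $e_{22}$, and a competing term with $S'\neq S$ passes through products such as $(e_{11}+e_{22})(e_{22}+e_{33})=e_{22}$ and $e_{22}e_{23}=e_{23}\neq0$, so the cross terms survive and the chosen coefficient is not isolated.

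What is actually needed — and what the paper does by pointing to the proof for grading (II)(b) rather than to Lemma \ref{l1UT3} — is the generic evaluation $y_i=w_1^{(i)}e_{11}+w_2^{(i)}e_{22}+w_3^{(i)}e_{33}$ and $z_m=w_1^{(m)}e_{12}+w_2^{(m)}e_{23}$ (legitimate because in the normal form above only the diagonal parts of the $y_i$ matter). This produces $(1,2)$- and $(2,3)$-entries $p_1w_1^{(m)}$ and $p_2w_2^{(m)}$, where $p_1=\sum_S\alpha_S\prod_{i\in S}w_1^{(i)}\prod_{j\notin S}w_2^{(j)}$ and $p_2$ is its shift; these are nonzero multilinear polynomials as soon as some $\alpha_S\neq0$ (distinct $S$ give distinct monomials), and a point where both are simultaneously nonzero lets the free variables $w_1^{(m)},w_2^{(m)}$ realize an arbitrary element of $\mathcal{A}_g$. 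If every $\alpha_S=0$ then $f$ is a graded identity and $Im(f)=\{0\}$ — this is a legitimate outcome, not the contradiction with ``multilinearity/choice'' that your sketch invokes. With this replacement for the one-odd-variable case, the rest of your argument stands.
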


\begin{proof}
We denote $g_{1}=g$ and note that  $\mathcal{A}_{1}=span\{e_{11},e_{22},e_{33},e_{13}\}$ and $\mathcal{A}_{g}=span\{e_{12},e_{23}\}$. Then $\mathcal{A}_{g}^{2}\subset span\{e_{13}\}$ and $\mathcal{A}$ satisfies the identities $z[y_{1},y_{2}]\equiv 0$ and $[y_{1},y_{2}]z\equiv 0$. The case when $f$ has one variable of homogeneous degree $g$ and $m-1$ neutral variables can be treated as in the previous lemma. The remaining cases are considered as above.  
\end{proof}

Hence we have the following theorem.

\begin{thm}
Let $F$ be an arbitrary field, let $UT_{3}=\mathcal{A}=\bigoplus_{g\in G}A_{g}$ be some non trivial grading on $\mathcal{A}$, and let $f\in F\langle X \rangle^{gr}$ be a multilinear graded  polynomial. Then $Im(f)$ on $\mathcal{A}$ is a homogeneous subspace of $\mathcal{A}$. If $|F|\geq 3$ and $\mathcal{A}$ is equipped with the trivial grading, then the image is also a subspace. 
\end{thm}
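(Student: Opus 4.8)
The final statement collects the two cases already dispatched by the preceding lemmas, so the plan is essentially to assemble those lemmas plus one additional argument for the trivial grading.

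\textbf{Strategy.} By Theorem \ref{gradingsupper} together with Proposition \ref{lowprop}, it suffices to verify the claim for elementary gradings on $UT_3$. The elementary gradings fall into the seven cases (I)(a)--(e) and (II)(a)--(b) listed above. Cases (I)(b), (I)(c), (I)(d), (I)(e), (II)(a) and (II)(b) are each nontrivial and are handled precisely by the six lemmas immediately preceding, so for every \emph{nontrivial} elementary grading the image of a multilinear graded polynomial is a homogeneous subspace; Proposition \ref{lowprop} then transfers this to an arbitrary nontrivial grading. This settles the first assertion. The only remaining case is (I)(a), the trivial grading, which is not covered by those lemmas because there the image need not be a priori homogeneous (there is only one component) and one actually needs to know it is a subspace.

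\textbf{The trivial grading.} Here a multilinear graded polynomial is just an ordinary multilinear polynomial $f\in F\langle X\rangle$, and $Im(f)$ on $UT_3$ with the trivial grading is simply its ordinary image on $UT_3$. I would invoke the result of Gargate--de Mello, Theorem \ref{tGargateThiago}, respectively of Luo--Wang, Theorem \ref{TLuoWang}: for a field with at least $n(n-1)/2$ elements and $n=3$ this requires $|F|\ge 3$, which is exactly the hypothesis imposed. By Theorem \ref{TLuoWang}, if $f$ has commutator degree $r$ then $Im(f)$ on $UT_3$ equals $J^r$ (with $J=Jac(UT_3)$ and $J^0=UT_3$), and if $f$ is itself a polynomial identity for $UT_3$ then $Im(f)=\{0\}$. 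In every case $Im(f)$ is one of $\{0\}$, $J^3=\{0\}$, $J^2$, $J$, $UT_3$, each of which is a vector subspace. Hence for $|F|\ge 3$ and the trivial grading, $Im(f)$ is a subspace, completing the proof.

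\textbf{Main obstacle.} There is no real obstacle here: the theorem is a packaging result. The one point requiring a little care is making sure the cardinality hypothesis $|F|\ge 3$ is precisely what Theorem \ref{TLuoWang} demands when $n=3$ (indeed $n(n-1)/2=3$), and that Proposition \ref{lowprop} legitimately reduces the general grading to the elementary one; both are immediate from the cited statements. One should also note that in the nontrivial cases the individual lemmas were proved over an arbitrary field $F$, so no cardinality restriction is needed there, which is why the cardinality hypothesis appears only in the trivial-grading clause.
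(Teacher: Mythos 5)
Your proposal is correct and follows essentially the same route as the paper: the paper's proof simply cites the preceding lemmas together with Proposition \ref{lowprop} (and, implicitly, Theorem \ref{gradingsupper} to reduce to elementary gradings and Theorem \ref{TLuoWang} for the trivial grading, exactly as in the $UT_2$ case). Your explicit handling of the trivial grading via the commutator degree and the check that $n(n-1)/2=3$ matches the hypothesis $|F|\ge 3$ just fills in details the paper leaves to the reader.
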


\begin{proof}
The proof is clear from the previous lemmas and Proposition \ref{lowprop}.
\end{proof}

\subsection{The graded Jordan algebra $UJ_{2}$}

Throughout this subsection we assume that $F$ is a field of characteristic different from $2$ and we denote by $UJ_{n}$ the Jordan algebra of the upper triangular matrices with product $a\circ b=ab+ba$. Unlike the associative setting, gradings on $UJ_{n}$ are not only elementary ones. Actually, a second kind of gradings also occurs on $UJ_{n}$, the so-called mirror type gradings, and we define these below. First of all let us introduce the following notation. 

Let $i$, $m$ be non negative integers and set 
\[
E_{i:m}^{+}=e_{i,i+m}+e_{n-i-m+1,n-i+1} \ \mbox{and} \ E_{i:m}^{-}=e_{i,i+m}-e_{n-i-m+1,n-i+1}.
\]
\begin{defi}
A $G$-grading on $UJ_{n}$ is called of mirror type if the matrices $E_{i:m}^{+}$ and $E_{i:m}^{-}$ are homogeneous, and $\deg(E_{i:m}^{+})\neq \deg(E_{i:m}^{-})$.
\end{defi}

We recall the following theorem from \cite{KYa}.

\begin{thm}[\cite{KYa}]
The $G$-gradings on the Jordan algebra $UJ_{n}$ are, up to a graded isomorphism, elementary or of mirror type.
\end{thm}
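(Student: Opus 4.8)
The plan is to classify the gradings through their idempotent structure, exploiting that reflection in the antidiagonal is the only source of non-elementary behaviour. Throughout let $\varphi\colon UJ_{n}\to UJ_{n}$, $\varphi(a)=J_{0}a^{t}J_{0}$, be the reflection in the antidiagonal, where $J_{0}$ is the matrix with $1$'s on the antidiagonal; since $\varphi$ reverses associative products it satisfies $\varphi(a\circ b)=\varphi(a)\circ\varphi(b)$, so it is a Jordan automorphism of $UJ_{n}$, and it is $\varphi$ that produces the elements $E_{i:m}^{\pm}$. The first facts I would record are: a homogeneous idempotent is automatically neutral, because if $f=f^{2}$ is homogeneous of degree $g$ then $f\circ f=2f$ forces $g^{2}=g$, hence $g=1$ (here $\mathrm{char}\,F\neq 2$); and the strictly upper triangular part is the Jacobson radical of $UJ_{n}$, hence a characteristic ideal and therefore homogeneous. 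Thus the grading descends to $UJ_{n}/\mathrm{Jac}\cong F\times\cdots\times F$, and the whole analysis can be organised around the diagonal idempotents $e_{11},\dots,e_{nn}$.

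The core of the argument is to produce a maximal system of pairwise orthogonal homogeneous (hence neutral) idempotents $f_{1},\dots,f_{s}$ with $f_{1}+\cdots+f_{s}=1$, and then to split into two cases according to whether $s=n$ or $s<n$. First I would prove a graded conjugation lemma: any complete orthogonal system of idempotents of $UT_{n}$ is conjugate to $\{e_{11},\dots,e_{nn}\}$ by an inner automorphism, and when the idempotents are neutral this conjugating automorphism can be chosen among the neutral (inner) automorphisms of $UJ_{n}$, so that after transporting the grading along it we may assume each $f_{k}$ is a sum of standard matrix units $e_{ii}$. If $s=n$, then every $f_{k}=e_{ii}$ is primitive, so the Peirce spaces relative to the $f_{k}$ are the one-dimensional subspaces $Fe_{ij}$; since the Peirce projections are built from multiplication by neutral elements, these Peirce spaces are homogeneous, whence each $e_{ij}$ is homogeneous and the grading is elementary.

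If $s<n$, some homogeneous idempotent $f_{k}$ fails to be primitive in $UJ_{n}$. Here I would analyse the Peirce decomposition relative to the $f_{k}$, showing that a neutral idempotent that is not primitive must be the sum $e_{ii}+e_{n-i+1,n-i+1}=E_{i:0}^{+}$ of an antidiagonal mirror pair, with $\varphi$ interchanging its two primitive constituents. Concretely, the two-dimensional component $Fe_{ii}\oplus Fe_{n-i+1,n-i+1}$ is homogeneous and must split into two distinct homogeneous lines; these are precisely $FE_{i:0}^{+}$ and $FE_{i:0}^{-}$, where $E_{i:0}^{+}$ is neutral (being an idempotent up to the factor $2$) while $E_{i:0}^{-}$ has degree of order dividing two, since $E_{i:0}^{-}\circ E_{i:0}^{-}=2E_{i:0}^{+}$. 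Propagating this mirror pairing through the Peirce multiplication rules to every off-diagonal component $Fe_{i,i+m}\oplus Fe_{n-i-m+1,n-i+1}$ then forces each $E_{i:m}^{\pm}$ to be homogeneous with $\deg E_{i:m}^{+}\neq\deg E_{i:m}^{-}$, which is exactly the mirror type.

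The step I expect to be most delicate is the graded conjugation lemma together with the structural claim that a non-primitive neutral idempotent can only come from an antidiagonal pair: this is precisely where the rigidity of the complete flag in $UT_{n}$ is replaced by the single extra symmetry $\varphi$, and one must rule out any other way for primitive idempotents to become homogeneous only in combination, as well as align the conjugation with the flip in the mirror case. I would carry out the whole argument for an arbitrary group $G$ and an arbitrary field of characteristic $\neq 2$ directly through these Peirce computations, rather than through diagonalizable group schemes, since the support of an elementary grading on $UT_{n}$ need not be abelian and the base field is not assumed algebraically closed.
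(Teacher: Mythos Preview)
The paper does not prove this theorem. It is quoted from \cite{KYa} (Koshlukov--Yasumura) as background, with no argument given here beyond the statement itself. So there is no ``paper's own proof'' to compare your proposal against; any comparison would have to be with the original source \cite{KYa}, not with the present paper.

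That said, your outline is the expected shape of such a classification: pass to the semisimple quotient $UJ_{n}/\mathrm{Jac}$, analyse a maximal family of orthogonal homogeneous idempotents, and use the Peirce decomposition to recover either the elementary or the mirror picture via the antidiagonal flip $\varphi$. This is indeed the strategy of \cite{KYa}. Two small remarks. First, you correctly flag the delicate step: showing that a non-primitive homogeneous idempotent must be an antidiagonal pair $e_{ii}+e_{n-i+1,n-i+1}$ (and not some other combination) is where the real work lies, and your sketch does not yet contain the mechanism that forces this pairing; in \cite{KYa} this is handled by a careful analysis of which idempotents can occur as homogeneous elements modulo the radical together with the action of $\varphi$. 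Second, your closing comment that ``the support of an elementary grading on $UT_{n}$ need not be abelian'' is about the associative case; for $UJ_{n}$ the present paper records (citing \cite{KYa}, Theorem~24) that the support of any grading is always abelian, so that particular obstruction to the group-scheme approach does not arise here.
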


In particular we have the following classification of the gradings on $UJ_{2}$.

\begin{prp}
Up to a graded isomorphism, the gradings on $UJ_{2}$ are given by $UJ_{2}=\mathcal{A}=\bigoplus_{g\in G}\mathcal{A}_{g}$ where

\begin{itemize}
\item[(I)] elementary ones
\begin{itemize}
\item[(a)] trivial grading;
\item[(b)]  $\mathcal{A}_{1}=\begin{pmatrix}
    a & 0 \\
     & b
\end{pmatrix}$, $\mathcal{A}_{g}=\begin{pmatrix}
    0 & c \\
     & 0
\end{pmatrix}$
\end{itemize}
\item[(II)] mirror type ones;
\begin{itemize}
\item[(a)] $\mathcal{A}_{1}=\begin{pmatrix}
    a & 0 \\
     & a
\end{pmatrix}$, $\mathcal{A}_{g}=\begin{pmatrix}
    b & c \\
     & -b
\end{pmatrix}$
\item[(b)] $\mathcal{A}_{1}=\begin{pmatrix}
    a & b \\
     & a
\end{pmatrix}$, $\mathcal{A}_{g}=\begin{pmatrix}
    c & 0 \\
     & -c
\end{pmatrix}$
\item[(c)] $\mathcal{A}_{1}=\begin{pmatrix}
    a & 0 \\
     & a
\end{pmatrix}$, $\mathcal{A}_{g}=\begin{pmatrix}
    b & 0 \\
     & -b
\end{pmatrix}$, $\mathcal{A}_{h}=\begin{pmatrix}
    0 & c \\
     & 0
\end{pmatrix}$
\end{itemize}
\end{itemize}
where $g$, $h\in G$ are elements of order $2$.
\end{prp}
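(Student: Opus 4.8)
The plan is to invoke the theorem of \cite{KYa} stated just above, which reduces every $G$-grading on $UJ_n$, up to graded isomorphism, to one that is either elementary or of mirror type. It then suffices to run through both types on the three-dimensional Jordan algebra $UJ_2$, whose underlying space has basis $e_{11},e_{22},e_{12}$. Throughout I use that $1=e_{11}+e_{22}$ lies in the neutral component $\mathcal{A}_1$ of any grading of a unital graded algebra (as recalled in the proof of Proposition \ref{basicprop}), and that $\mathrm{char}(F)\neq2$, so that $\{e_{11}+e_{22},\,e_{11}-e_{22}\}$ is a basis of the diagonal. For brevity write $u=e_{11}+e_{22}$, $v=e_{11}-e_{22}$, $w=e_{12}$; the only nonzero Jordan products among these are $u\circ u=2u$, $u\circ v=2v$, $u\circ w=2w$ and $v\circ v=2u$, while $v\circ w=w\circ w=0$.

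First I would dispose of the elementary gradings. By definition such a grading is given by a tuple $(g_1,g_2)\in G^2$ with $\deg e_{ij}=g_i^{-1}g_j$, so $e_{11},e_{22}$ are neutral and the grading is completely determined by the single element $g:=\deg e_{12}=g_1^{-1}g_2$. If $g=1$ the grading is trivial, which is case (I)(a); if $g\neq1$ then $\mathcal{A}_1=\mathrm{span}\{e_{11},e_{22}\}$ and $\mathcal{A}_g=\mathrm{span}\{e_{12}\}$, which is case (I)(b). Since $w\circ w=0$, no constraint on the order of $g$ is imposed, and these exhaust the elementary gradings.

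Next I would treat the mirror type gradings. For $n=2$ the relevant mirror matrices are $E_{1:0}^{\pm}=e_{11}\pm e_{22}$ together with $E_{1:1}^{+}=2e_{12}$ (note $E_{1:1}^{-}=0$), so the definition forces $u$, $v$ and $w$ to be homogeneous. As above $u=1$ is neutral; put $g:=\deg v$ and $h:=\deg w$. The mirror condition $\deg E_{1:0}^{+}\neq\deg E_{1:0}^{-}$ gives $g\neq1$, while the Jordan square
\[
v\circ v=2u
\]
lands in $\mathcal{A}_1$ and hence forces $g^2=1$, so $g$ is an involution. It remains to split according to the value of $h$: if $h=g$ one gets $\mathcal{A}_1=\mathrm{span}\{u\}$ and $\mathcal{A}_g=\mathrm{span}\{v,w\}$, which is case (II)(a); if $h=1$ one gets $\mathcal{A}_1=\mathrm{span}\{u,w\}$ and $\mathcal{A}_g=\mathrm{span}\{v\}$, which is case (II)(b); and if $h\notin\{1,g\}$ the three degrees $1,g,h$ are distinct and one obtains the three one-dimensional components $\mathcal{A}_1=\mathrm{span}\{u\}$, $\mathcal{A}_g=\mathrm{span}\{v\}$, $\mathcal{A}_h=\mathrm{span}\{w\}$ of case (II)(c).

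The verification that each candidate decomposition is genuinely a $G$-grading (closure under $\circ$) is routine, since it only involves the short product table listed in the first paragraph. The step I expect to carry the real content is the extraction of the homogeneous structure in the mirror case: one must argue that homogeneity of the $E_{i:m}^{\pm}$, combined with $\mathrm{char}(F)\neq2$, pins down the degrees of the entire diagonal through the basis $\{u,v\}$, and that the involution relation $g^2=1$ is genuinely forced by the Jordan square $v\circ v=2u$. This relation is precisely what separates the mirror gradings from the elementary ones and produces the order-two element(s) recorded in the statement; matching each of the three resulting decompositions to the normal forms (II)(a)--(c) is then a direct comparison.
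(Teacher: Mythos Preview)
The paper does not actually prove this proposition; it is stated as a direct specialization of the theorem of \cite{KYa} recalled just above, together with the reference to \cite{KMa} where the gradings on $UJ_{2}$ were first described. Your argument is a perfectly reasonable way to flesh out that citation, and the case analysis for both the elementary and mirror types is correct.

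One point deserves attention. Your computation $v\circ v=2u$ correctly forces $g^{2}=1$, but nothing in your argument constrains the order of $h=\deg(e_{12})$ in case (II)(c): since $w\circ w=0$ and $v\circ w=0$, the Jordan product table imposes no relation on $h^{2}$. The proposition as stated in the paper asserts that both $g$ and $h$ have order $2$. Either this is a mild imprecision in the statement (and $h$ may in fact be any element of $G\setminus\{1,g\}$), or the full definition of mirror type grading in \cite{KYa} carries more structure than the condition recorded here. You should either check the original source or flag this point explicitly; as written, your proof establishes the list of decompositions but not the claimed order of $h$.
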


In \cite{KYa} it was also proved that the support of a grading on $UJ_{n}$ is always abelian (see \cite{KYa} Theorem 24). Hence by Proposition \ref{basicprop} (4) we have that $Im(f)$ on $UJ_{n}$ is a homogeneous subset for any multilinear graded polynomial $f\in J(X)$.

Next we analyse the images of a multilinear graded Jordan polynomial $f$ on the gradings considered above.

\begin{lem}\label{l1jordan}
Let $UJ_{2}$ be endowed with the grading (I)(b). Then $Im(f)$ on $UJ_{2}$ is a homogeneous subspace.
\end{lem}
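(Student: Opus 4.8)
The plan is to mirror the associative arguments used for $UT_2$, adapting them to the Jordan product $a\circ b = ab+ba$. First I would record the structure of the grading: $\mathcal{A}_1$ is the span of $e_{11}$ and $e_{22}$ and $\mathcal{A}_g = Fe_{12}$, with $g$ of order $2$. Since $\mathcal{A}_g \circ \mathcal{A}_g \subseteq Fe_{12}\circ Fe_{12} = \{0\}$ (as $e_{12}e_{12}=0$), any multilinear graded polynomial involving two or more variables of degree $g$ is a graded identity, so $Im(f)=\{0\}$ in that case. Hence only two cases remain: $f$ is a polynomial in neutral variables only, or $f$ has exactly one variable of degree $g$ and the rest neutral.

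For the purely neutral case, $\mathcal{A}_1$ is a commutative associative (hence Jordan) algebra isomorphic to $F\times F$; restricted to $\mathcal{A}_1$ the Jordan polynomial $f$ behaves like an associative multilinear polynomial $\tilde f$ (all brackets vanish), so $Im(f)$ on $\mathcal{A}_1$ equals $\{0\}$ if the sum of coefficients is zero and equals $\mathcal{A}_1$ otherwise. The first possibility is handled by Lemma \ref{sumofcoeffi} (so $f$ is a graded identity on all of $UJ_2$ in that subcase, since on the radical part everything again collapses), and the second by the same computation as in Proposition \ref{basicprop}(2) applied inside $\mathcal{A}_1$. Either way $Im(f)$ is a homogeneous subspace.

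For the remaining case, $f = f(z_m, y_1,\dots,y_{m-1})$ is linear in the single odd variable $z_m$ of degree $g$. Its image lies in $\mathcal{A}_g = Fe_{12}$, a one-dimensional homogeneous subspace, and $\lambda\,Im(f)\subseteq Im(f)$ for every $\lambda\in F$ by rescaling $z_m$. So by Proposition \ref{basicprop}(1) we conclude $Im(f)$ is either $\{0\}$ or $\mathcal{A}_g$; in both cases it is a homogeneous subspace. Combining the three cases finishes the proof.

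I do not expect a serious obstacle here: the only thing to be careful about is confirming that the Jordan evaluations behave as claimed — in particular that on $\mathcal{A}_1 \cong F\times F$ the Jordan product is just the (commutative) componentwise product so that multilinear Jordan polynomials reduce to ordinary commutative ones, and that products of an $e_{12}$-block with diagonal blocks under $\circ$ still land in $Fe_{12}$. These are immediate computations with $2\times 2$ matrices.
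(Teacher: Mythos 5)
Your proposal is correct and follows essentially the same route as the paper: reduce via $\mathcal{A}_g\circ\mathcal{A}_g=\{0\}$ to the purely neutral case and the single-odd-variable case, observe that on the diagonal every multilinear Jordan monomial evaluates to $2^{m-1}D_1\cdots D_m$ so the image is $\{0\}$ or $\mathcal{A}_1$ according to whether the coefficient sum vanishes, and use one-dimensionality of $\mathcal{A}_g$ for the remaining case. The only cosmetic quibbles are that your aside about ``the radical part'' is vacuous here (purely neutral evaluations are already diagonal) and that the coefficient-sum criterion is a direct computation rather than an application of the associative Lemma \ref{sumofcoeffi}.
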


\begin{proof}
We start with a multilinear polynomial $f$ in $m$ neutral variables. We evaluate each variable $y_{i}$ to an arbitrary diagonal matrix $D_{i}$. Therefore each monomial $\mathbf{m}$ in $f$ is evaluated to $2^{m-1}\beta D_{1}\cdots D_{m}$, where $\beta\in F$ is the coefficient of $\mathbf{m}$. Hence 
\[
f(D_{1},\dots,D_{m})=2^{m-1}\alpha D_{1}\cdots D_{m}
\]
where $\alpha\in F$ is the sum of all coefficients of $f$. In case $\alpha=0$, then $f=0$ is a graded polynomial identity for $UJ_{2}$, otherwise we can take $D_{2}=\cdots=D_{m}=I_{2}$ and use $D_{1}$ in order to obtain every diagonal matrix in the image of $f$.

Since $UJ_{2}$ satisfies the graded identity $z_{1}\circ z_{2}=0$ such that $\deg(z_{1})=\deg(z_{2})=g$, then we only need to analyse the case where $f$ is a multilinear polynomial in $m-1$ neutral variables and one of homogeneous degree $g$. Obviously we must have $Im(f)\subset \mathcal{A}_{g}$ and this homogeneous component is one-dimensional, then we are done.
\end{proof}

For the grading (II)(a) we recall a lemma from \cite{GSa} applied to multilinear polynomials. In order to make the notation more compact we omit the symbol $\circ$ for the Jordan product, and we write $ab$ instead of $a\circ b$. If no brackets are given in a product, we assume these left-normed, that is $abc=(ab)c$.

\begin{lem}[\cite{GSa}]\label{l1DimasSalomao}
Let $UJ_{2}$ be endowed with the grading (II)(a) and let $f\in J(X)_{g}$ be a multilinear $\mathbb{Z}_{2}$-graded polynomial. Then, modulo the graded identities of $UJ_{2}$, we can write $f$ as a linear combination of monomials of the type
\[
y_{1}\cdots y_{l}z_{i_{0}}(z_{i_{1}}z_{i_{2}})\cdots (z_{i_{2m-1}}z_{i_{2m}}), 1<\cdots<l, i_{1}<i_{2}<i_{3}<\cdots<i_{m}<i_{m+1}, i_{0}>0.
\]
\end{lem}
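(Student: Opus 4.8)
The plan is to exploit the very special structure of $UJ_{2}$ under the grading (II)(a): the neutral component $\mathcal{A}_{1}=FI_{2}$ consists of scalar matrices, while the Jordan product of any two elements of $\mathcal{A}_{g}$ is again a scalar matrix (explicitly, the Jordan product of the two matrices in $\mathcal{A}_{g}$ with parameters $(b_{1},c_{1})$ and $(b_{2},c_{2})$ equals $2b_{1}b_{2}I_{2}$). Consequently, for a neutral variable $y$, odd variables $z_{1},z_{2}$ and arbitrary homogeneous elements $u,w$, one checks at once by evaluation that
\[
(y\circ u)\circ w\equiv y\circ(u\circ w),\qquad ((z_{1}\circ z_{2})\circ u)\circ w\equiv(z_{1}\circ z_{2})\circ(u\circ w)
\]
are graded identities of $UJ_{2}$. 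Together with the commutativity of the Jordan product, these say precisely that a neutral variable, and any product of two odd variables, may be slid freely through a monomial and pulled out as an outermost left factor. These are the only identities I will use; notably, the general Jordan identity plays no role.

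First I would use the first identity to push every neutral variable out to the left of its monomial and then sort the neutral variables by index. Thus, modulo $Id^{gr}(UJ_{2})$, each monomial of $f$ is congruent to a monomial $y_{1}\cdots y_{l}\,u$, where $u$ is a multilinear Jordan monomial in odd variables only; since $f\in J(X)_{g}$, the monomial $u$ involves an odd number, say $2m+1$, of odd variables.

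Next I would bring $u$ to the desired shape, by induction on $m$. For $m=0$ we have $u=z_{i_{0}}$ and there is nothing to do. For $m\geq1$, the binary tree representing $u$ has a \emph{cherry}, i.e. an internal node whose two children are leaves, which corresponds to a subproduct $z_{a}\circ z_{b}$ of two odd variables. Using the second identity above (and commutativity to handle the side on which this subproduct sits), bubble $z_{a}\circ z_{b}$ up the tree one level at a time, replacing $((z_{a}z_{b})\circ M)\circ M'$ by $(z_{a}z_{b})\circ(M\circ M')$, until it is the outermost factor; this yields $u\equiv(z_{a}\circ z_{b})\circ u''$ modulo $Id^{gr}(UJ_{2})$, where $u''$ is the multilinear Jordan monomial in the remaining $2m-1$ odd variables obtained from $u$ by deleting that cherry and contracting its parent. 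By the inductive hypothesis $u''$ is congruent to a monomial $z_{i_{0}}(z_{i_{1}}z_{i_{2}})\cdots(z_{i_{2m-3}}z_{i_{2m-2}})$ of the required shape; since all the bracketed pairs, including $z_{a}\circ z_{b}$, behave like neutral scalars, the whole product can be rebracketed in left-normed form and the pairs can be permuted among themselves, and finally the commutativity of $\circ$ lets us sort the two entries inside each bracket and sort the brackets. Reinstating the neutral prefix $y_{1}\cdots y_{l}$ completes the reduction, and, applying this to each monomial of $f$, we obtain $f$ written modulo $Id^{gr}(UJ_{2})$ as a linear combination of monomials of the stated type.

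The one point that requires care is the bubbling-up step: one must check that the identities for $z_{a}\circ z_{b}$ really do allow an arbitrary leaf-level subproduct equal to $z_{a}\circ z_{b}$ to be extracted as an outermost factor of any monomial in which it occurs, and that the tree $u''$ left behind is again a genuine multilinear Jordan monomial in the remaining variables. Both are routine tree surgeries with no hidden difficulties; the essential phenomenon is that in $UJ_{2}$ every product of length at least two among the odd variables collapses to a scalar matrix, which is exactly what forces such a rigid normal form to exist.
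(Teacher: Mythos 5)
Your argument is correct. Note first that the paper itself does not prove Lemma \ref{l1DimasSalomao}: it is imported from the preprint [GSa] without proof, so there is no in-paper argument to compare against. Your proof is a legitimate, self-contained derivation, and its key observation is sound: in the grading (II)(a) one has $\mathcal{A}_{1}=FI_{2}$ and $z_{1}\circ z_{2}=2b_{1}b_{2}I_{2}\in FI_{2}$, so both a neutral variable and any product of two odd variables evaluate to scalar matrices, whence the associator identities $(y,u,w)\equiv 0$ and $(z_{1}z_{2},u,w)\equiv 0$ hold on $UJ_{2}$ and every such factor can be bubbled to the root of a monomial; the "cherry'' existence (a deepest internal node of the tree has two leaf children) guarantees the induction on the number of odd pairs goes through. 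Two small points you should make explicit. First, after reinstating the extracted pair $(z_{a}z_{b})$ in front of the normal form of $u''$, sorting the pairs and their entries is not quite enough if the target normal form prescribes a \emph{canonical pairing} of the $2m$ non-distinguished odd variables (the index condition in the statement is evidently garbled, but some canonical choice is intended); you also need the re-pairing relation $z_{i_{0}}(z_{j_{1}}z_{j_{2}})(z_{j_{3}}z_{j_{4}})\equiv z_{i_{0}}(z_{j_{1}}z_{j_{3}})(z_{j_{2}}z_{j_{4}})$, which holds on $UJ_{2}$ by the same scalar computation (both sides equal $16b_{j_{1}}b_{j_{2}}b_{j_{3}}b_{j_{4}}z_{i_{0}}$) and is therefore available "modulo the graded identities of $UJ_{2}$''. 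Second, you should record why $u$ is nonempty and has an odd number of odd leaves (homogeneity of $f$ of degree $g$ with $g$ of order $2$), which you do in passing. With those two sentences added, the proof is complete, and it is arguably more elementary than the identity-theoretic route of [GSa], since it uses only directly verified evaluations rather than a description of the graded $T$-ideal.
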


\begin{lem}\label{l2jordan}
Let $UJ_{2}$ be endowed with the grading (II)(a). Then $Im(f)$ on $UJ_{2}$ is a homogeneous subspace.
\end{lem}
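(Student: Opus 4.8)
The plan is to analyze the grading (II)(a) on $UJ_2$, where $\mathcal{A}_1$ consists of the scalar matrices $aI_2$ and $\mathcal{A}_g$ consists of the matrices $\begin{pmatrix} b & c \\ 0 & -b\end{pmatrix}$, and to determine the image of a multilinear graded polynomial $f$ case by case according to the homogeneous degrees of its variables. Since the support $\{1,g\}$ is abelian, Proposition \ref{basicprop}(4) guarantees $Im(f)$ is a homogeneous subset, so it suffices to show it is a subspace; I will handle the degree-$1$ (neutral) case and the degree-$g$ case separately. When $f=f(y_1,\dots,y_m)$ is in neutral variables only, every $y_i$ is a scalar matrix, so $f$ evaluates to a scalar matrix and the computation is the same as in the first paragraph of Lemma \ref{l1jordan}: either $f$ is a graded identity, or the sum of coefficients (times a power of $2$) is nonzero and $Im(f)=\mathcal{A}_1$ by rescaling one variable.

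The substantive case is when $f$ has at least one variable of degree $g$. Here the key tool is Lemma \ref{l1DimasSalomao}, which says that modulo $Id^{gr}(UJ_2)$ we may assume $f\in J(X)_g$ is a linear combination of monomials of the form $y_1\cdots y_l z_{i_0}(z_{i_1}z_{i_2})\cdots(z_{i_{2m-1}}z_{i_{2m}})$; in particular the number of odd variables is odd (if it were even, one checks $f$ is an identity, since a product of an even number of elements of $\mathcal{A}_g$ lands in $\mathcal{A}_1$ and the bracketing forces it to be central — this needs to be verified but should be routine). So $Im(f)\subseteq\mathcal{A}_g$. I would then evaluate the neutral variables $y_j$ to $\lambda_j I_2$ and the odd variables $z_k$ to generic elements $\begin{pmatrix} b_k & c_k \\ 0 & -b_k\end{pmatrix}$ of $\mathcal{A}_g$, compute $f$ under this substitution, and read off the two coordinates (the ``$b$'' part and the ``$c$'' part) of the resulting matrix in $\mathcal{A}_g$ as commutative polynomials in the $\lambda_j$, $b_k$, $c_k$. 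The goal is to show that, unless $f$ is already a graded identity, one can choose values making the image of $f$ fill all of $\mathcal{A}_g$, hence $Im(f)=\mathcal{A}_g$; the one-dimensional fallback (Proposition \ref{basicprop}(1)) covers the case where the image is forced into a line.

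The main obstacle I anticipate is the bookkeeping in computing $f$ under the generic evaluation and showing the relevant coefficient polynomials are nonzero (so that Lemma \ref{lcomutpoly} or Corollary \ref{ccomutpolyfinite} applies). The matrices $\begin{pmatrix} b & c\\ 0 & -b\end{pmatrix}$ form a $2$-dimensional commutative-free Jordan-type structure: note $\begin{pmatrix} b & c \\ 0 & -b\end{pmatrix}^2 = b^2 I_2$ is scalar, which collapses each pair $(z_{i_{2j-1}}z_{i_{2j}})$ to something manageable, and scalar matrices act centrally, so the monomial $y_1\cdots y_l z_{i_0}(z_{i_1}z_{i_2})\cdots$ simplifies to a scalar multiple of $z_{i_0}$ (in $\mathcal{A}_g$) times products of ``$b$''-coordinates. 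The delicate point is to argue that distinct monomials in the normal form of Lemma \ref{l1DimasSalomao} do not cancel after this evaluation — i.e., that the evaluation map is injective enough on the normal-form monomials to detect that $f\not\equiv 0$. Once the ``$c$''-coordinate polynomial and the ``$b$''-coordinate polynomial are shown nonzero (using that each records which odd variable sits in the distinguished position $i_0$ and how the $y$'s are distributed), a simultaneous nonvanishing argument à la Corollary \ref{ccomutpolyfinite} finishes the proof, with no hypothesis on $|F|$ beyond $\mathrm{char}(F)\neq 2$ since the degree bounds in each variable are controlled.
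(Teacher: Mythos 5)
Your setup --- reduce to the normal form of Lemma \ref{l1DimasSalomao}, observe that Jordan products of pairs of odd elements are scalar so each monomial collapses to a scalar times the distinguished $z_{i_0}$, and fall back on Proposition \ref{basicprop}(1) when the image sits in a line --- is the same as the paper's. But the crux is precisely the step you leave open, and the route you propose for it does not close. Two concrete problems. First, you plan to show that both coordinate polynomials of the fully generic evaluation (the ``$b$''-part and the ``$c$''-part) are nonzero and then apply simultaneous nonvanishing. The ``$b$''-coordinate polynomial can, however, vanish identically without $f$ being a graded identity: for instance $(z_1z_2)z_3-(z_1z_3)z_2$ evaluates to $4(b_1b_2z_3-b_1b_3z_2)$, whose diagonal part is zero while its $e_{12}$-part is not. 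So you need a criterion for when the image is forced into the line $span\{e_{12}\}$. The paper's key structural observation supplies it: the $(k,k)$ entry of \emph{every} evaluation of $f$ equals $(-1)^{k+1}2^{m+l+1}\alpha a$, where $\alpha$ is the sum of all coefficients of $f$ and $a$ is the product of the $(1,1)$ entries of the substituted matrices. Hence if $\alpha=0$ the image lies in $span\{e_{12}\}$ and Proposition \ref{basicprop}(1) finishes --- no cancellation analysis of normal-form monomials is needed at all.

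Second, even if both coordinate polynomials are nonzero, simultaneous nonvanishing at one point only produces a single element of $\mathcal{A}_g$ with both coordinates nonzero; it does not give surjectivity onto the two-dimensional space $\mathcal{A}_g$. The paper resolves both issues with a specific, not generic, evaluation in the case $\alpha\neq0$: relabel so that the normal-form monomial having $z_0$ in the distinguished position has nonzero coefficient $\beta_0$, set every $y_i=I_2$, every $z_i=e_{11}-e_{22}$ for $i\geq1$, and $z_0=w_1(e_{11}-e_{22})+w_2e_{12}$. All pairs then collapse to scalars, only monomials with $z_0$ distinguished contribute to the $(1,2)$ entry, and $f$ evaluates to $2^{m+l+1}\bigl(\alpha w_1(e_{11}-e_{22})+\beta_0 w_2e_{12}\bigr)$; since $char(F)\neq2$ and $\alpha,\beta_0\neq0$, this is a triangular linear system in $w_1,w_2$ realizing every element of $\mathcal{A}_g$, with no hypothesis on $|F|$. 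A small additional slip: when the number of odd variables is even, $f$ need not be an identity ($z_1z_2$ is not one); what is true is only that $Im(f)\subseteq\mathcal{A}_1$, which is one-dimensional, and that is all you need there.
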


\begin{proof}
Since $\dim\mathcal{A}_{1}=1$ it follows that if the image of a multilinear polynomial on $UJ_{2}$ is contained in $\mathcal{A}_{1}$ then it must be either $\{0\}$ or $\mathcal{A}_{1}$.

Now we consider a multilinear polynomial $f$ in homogeneous variables of degree $1$ and $g$ such that $\deg f=g$. Let $\mathbf{m}=y_{1}\cdots y_{l}z_{i_{0}}(z_{i_{1}}z_{i_{2}})\cdots (z_{i_{2m-1}}z_{i_{2m}})$ be a monomial as in Lemma \ref{l1DimasSalomao}. We note that the main diagonal of a matrix in $m(UJ_{2})$ is such that the entry $(k,k)$ is given by $(-1)^{k+1}2^{m+l+1}a$ where $a$ is the product of the entries at position $(1,1)$ of all matrices $y$ and $z$. Hence every matrix in $Im(f)$ is of the form
\[
\begin{pmatrix}
    2^{m+l+1}\alpha\cdot a & * \\
     & -2^{m+l+1}\alpha\cdot a
\end{pmatrix}
\]
where $\alpha$ is the sum of all coefficients of $f$.

In case $\alpha=0$, then $Im(f)\subset span\{e_{12}\}$ and then the image is completely determined. 

We consider now $\alpha\neq0$. Without loss of generality, we assume that the nonzero scalar occurs in the monomial $y_{1}\cdots y_{l}z_{0}(z_{1}z_{2})\cdots (z_{2m-1}z_{2m})$. Then we make the following evaluation: $y_{1}=\cdots=y_{l}=I_{2}$, $z_{0}=w_{1}(e_{11}-e_{22})+w_{2}e_{12}$ and $z_{i}=e_{11}-e_{22}$ for every $i=1$, \dots, $2m$, where $w_{1}$, $w_{2}$ are commutative variables. Therefore
\[
f(y_{1},\dots,y_{l},z_{0},\dots,z_{2m})=\begin{pmatrix}
    2^{m+l+1}\alpha w_{1} & 2^{m+l+1}w_{2} \\
     & -2^{m+l+1}\alpha w_{1}
\end{pmatrix}.
\]
Since $char(F)\neq 2$ and $\alpha\neq0$, it follows that $Im(f)=\mathcal{A}_{g}$. 
\end{proof}

Now we consider the grading (II)(b) and we recall another lemma from \cite{GSa}.

\begin{lem}[\cite{GSa}]\label{l2GSa}
Let $f\in J(X)_{1}$ be a multilinear polynomial. Then, modulo the graded identities of $UJ_{2}$, $f$ can be written as a linear combination of monomials of the form
\begin{enumerate}
    \item $(y_{i_{1}}\cdots y_{i_{r}})(z_{j_{1}}\cdots z_{j_{l}})$;
    \item $(((y_{i}z_{j_{1}})z_{j_{2}})y_{i_{1}}\cdots y_{i_{r}})z_{j_{3}}\cdots z_{j_{l}}$,
\end{enumerate}
where $l\geq 0$ is even, $r\geq 0$, $i_{1}<\cdots <i_{r}$, and $z_{j_{1}}<z_{j_{2}}<z_{j_{3}}<\cdots <z_{j_{l}}$.
\end{lem}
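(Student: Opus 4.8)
The plan is to prove the lemma by a straightening (normal form) algorithm carried out modulo $Id^{gr}(UJ_{2})$ for the grading (II)(b). As a first step I would fix convenient coordinates on the concrete algebra: writing $u=e_{11}-e_{22}$, one has $\mathcal{A}_{1}=FI_{2}\oplus Fe_{12}$, a two-dimensional commutative associative subalgebra with $e_{12}\circ e_{12}=0$, and $\mathcal{A}_{g}=Fu$, with $I_{2}\circ u=2u$, $e_{12}\circ u=0$ and $u\circ u=2I_{2}$. A direct check on these data yields the basic graded identities that will drive the rewriting: $(y_{1}y_{2})y_{3}=y_{1}(y_{2}y_{3})$ and commutativity among the $y$'s (the even part is commutative associative); $(y_{1}y_{2})z=y_{1}(y_{2}z)$ (the even part associates with a single odd variable); $(yz_{1})z_{2}=(yz_{2})z_{1}$ and $z_{i}\circ z_{j}\in FI_{2}$, so that products of odd variables are central and both associative and symmetric among themselves; together with commutativity and the linearized Jordan identity, which is needed for the "deep" bracketings, such as $(ab)(cd)$ or $a(b(cd))$, not reached by the previous relations.

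The second step is to reduce $f$ to monomials. By multilinearity $f$ is a linear combination of multilinear Jordan monomials in $y_{1},\dots,y_{r}$ and $z_{1},\dots,z_{l}$, and since $f$ has neutral degree, $l$ is even. I would then argue by induction on $r+l$ that each such monomial equals, modulo the identities above, a linear combination of monomials of the two prescribed shapes. For the inductive step one writes a monomial as $m=m'\,m''$ with $m'$, $m''$ of strictly smaller degree and splits into cases according to how the odd variables are distributed between $m'$ and $m''$. If one factor consists of even variables only, the even associativity and commutativity identities absorb it into the $y$-block of the other factor; if one factor is a product of odd variables only, its centrality lets it be moved outside and merged with the remaining odd variables. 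The essential case is when each of $m'$, $m''$ genuinely entangles an even variable with odd variables; there one uses $(yz_{1})z_{2}=(yz_{2})z_{1}$ together with the linearized Jordan identity to pull a pair of odd variables in front of a single even variable, producing a prefix $((y_{i}z_{j_{1}})z_{j_{2}})$ (which is a scalar multiple of $I_{2}$), after which what remains is a shorter monomial and the induction applies. Finally one uses commutativity and associativity inside the $y$-block and inside the $z$-block to arrange $i_{1}<\cdots<i_{r}$ and $z_{j_{1}}<z_{j_{2}}<z_{j_{3}}<\cdots<z_{j_{l}}$.

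The main obstacle, I expect, is exactly this essential case: controlling configurations in which odd variables are entangled with more than one distinct even variable. Such a configuration need not be literally of either normal form — for instance $((y_{1}z_{1})z_{2})((y_{2}z_{3})z_{4})$ is not — and collapsing it requires an identity expressing it as a linear combination of form (1) and form (2) monomials; one must verify that every such identity is a consequence of the basic relations above (equivalently, of a generating set for $Id^{gr}(UJ_{2})$) and that the rewriting terminates. A clean way to organize the argument is to first prove the statement for monomials with at most two odd variables, where the mechanism $((y_{i}z_{j_{1}})z_{j_{2}})$ versus $y_{i}(z_{j_{1}}z_{j_{2}})$ already appears, and then to induct on $l$ by peeling off a central pair $z_{j}z_{k}$; the nontrivial point is that this peeling is always possible modulo the identities.
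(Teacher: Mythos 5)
First, a point of reference: the paper does not prove this lemma at all --- it is quoted from the preprint \cite{GSa} --- so there is no in-paper argument to measure yours against, and your proposal has to stand on its own. Judged that way, you set up the algebra $UJ_{2}$ in the grading (II)(b) correctly and list the right elementary relations, but the entire content of the lemma is concentrated in the step you explicitly flag and leave open: reducing a monomial in which odd variables are entangled with more than one even variable (your own example $((y_{1}z_{1})z_{2})((y_{2}z_{3})z_{4})$) to a combination of the two normal forms, and showing the rewriting terminates. As written, that is a genuine gap rather than a detail to be filled in.

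The good news is that your framing makes the problem harder than it needs to be. The lemma is asserted modulo the whole $T_{G}$-ideal $Id^{gr}(UJ_{2})$, not modulo a chosen generating set, so a multilinear relation may be used as soon as it is verified by evaluation on basis elements; nothing has to be derived from ``basic identities.'' This turns the statement into linear algebra. Substituting $y_{i}=\alpha_{i}I_{2}+\beta_{i}e_{12}$ and $z_{j}=\gamma_{j}(e_{11}-e_{22})$, every multilinear monomial in $y_{1},\dots,y_{r},z_{1},\dots,z_{l}$ evaluates to $\gamma_{1}\cdots\gamma_{l}\bigl(c\,\alpha_{1}\cdots\alpha_{r}I_{2}+\sum_{k}d_{k}\,\alpha_{1}\cdots\widehat{\alpha_{k}}\cdots\alpha_{r}\beta_{k}\,e_{12}\bigr)$ for scalars $c,d_{1},\dots,d_{r}$, because $Fe_{12}$ is an ideal with $e_{12}\circ e_{12}=e_{12}\circ(e_{11}-e_{22})=0$; hence the multilinear component modulo the graded identities has dimension at most $r+1$. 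The type (1) monomial realizes $(c;d_{1},\dots,d_{r})=(1;1,\dots,1)$ and the type (2) monomial with distinguished variable $y_{i}$ realizes $(1;1,\dots,1)$ with $d_{i}=0$ (since $y_{i}\circ z_{j_{1}}$ kills $\beta_{i}$), all up to one common power of $2$; these $r+1$ vectors are linearly independent over any field, so the normal forms span and no termination argument is needed. One caution that shows your ``essential case'' is genuinely delicate: this computation forces the tail of a type (2) monomial to be read as multiplication by the block $(z_{j_{3}}\cdots z_{j_{l}})$, which is a multiple of $I_{2}$. If instead the trailing odd variables are adjoined one at a time, each such step annihilates the $e_{12}$-component, every type (2) monomial with $l\geq 4$ takes values in $FI_{2}$, and the asserted normal forms no longer span --- for instance $y_{1}((y_{2}z_{1})(z_{2}(z_{3}z_{4})))$ evaluates to $32\gamma_{1}\gamma_{2}\gamma_{3}\gamma_{4}(\alpha_{1}\alpha_{2}I_{2}+\alpha_{2}\beta_{1}e_{12})$, which would not be reachable. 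Any straightening proof of the kind you propose has to negotiate exactly this point.
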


\begin{lem}\label{l3jordan}
Let $UJ_{2}$ be endowed with the grading (II)(b). Then $Im(f)$ on $UJ_{2}$ is a homogeneous subspace.
\end{lem}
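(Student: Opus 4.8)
The plan is to exploit how small this grading is: here $\mathcal{A}_{1}=span\{I_{2},e_{12}\}$ has dimension $2$, $\mathcal{A}_{g}=span\{e_{11}-e_{22}\}$ has dimension $1$, and $supp(\mathcal{A})=\{1,g\}$ is abelian, so by Proposition \ref{basicprop}(4) $Im(f)$ is homogeneous and we may assume $f$ is homogeneous. Since $g$ has order $2$, $\deg f=1$ precisely when the number $N_{z}$ of non neutral variables of $f$ is even. First I would dispose of the easy cases. If $\deg f=g$, then $Im(f)\subset\mathcal{A}_{g}$, which is one-dimensional, so $Im(f)$ is $\{0\}$ or $\mathcal{A}_{g}$ by Proposition \ref{basicprop}(1). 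If $\deg f=1$ and $f$ has no non neutral variables, then $f$ is evaluated only on $\mathcal{A}_{1}$, which as a Jordan algebra is commutative and associative (being spanned by the identity and a square-zero element), so $f(a_{1},\dots,a_{m})=2^{m-1}\alpha\,a_{1}\cdots a_{m}$ with $\alpha$ the sum of the coefficients of $f$; hence $Im(f)=\{0\}$ if $\alpha=0$ and, since $\tfrac12 I_{2}$ is the unit of $UJ_{2}$, $Im(f)=\mathcal{A}_{1}$ if $\alpha\neq0$ by Proposition \ref{basicprop}(2). It remains to treat $\deg f=1$ with $N_{z}\geq 2$ even.

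For that I would first extract the $I_{2}$-coordinate using the Jordan homomorphism $\rho\colon UJ_{2}\to F\times F$ given by the two diagonal entries. Under $\rho$ the grading pushes forward to the grading on $F\times F$ whose neutral component is the diagonal and whose $g$-component is the antidiagonal, with $\rho(sI_{2}+te_{12})=(s,s)$ and $\rho(c(e_{11}-e_{22}))=(c,-c)$. As $F\times F$ is commutative and associative, a multilinear Jordan polynomial collapses there to $2^{m-1}\alpha$ times the componentwise product; evaluating on $\rho(y_{k})=(s_{k},s_{k})$, $\rho(z_{j})=(c_{j},-c_{j})$ and using that $N_{z}$ is even gives $\rho(f(a))=2^{m-1}\alpha(SC,SC)$ with $S=\prod_{k}s_{k}$ and $C=\prod_{j}c_{j}$. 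Thus the coefficient of $I_{2}$ in any value $f(a)$ equals $2^{m-1}\alpha SC$. In particular, if $\alpha=0$ then $Im(f)\subset span\{e_{12}\}$ is one-dimensional and Proposition \ref{basicprop}(1) finishes the argument; so from now on assume $\alpha\neq0$.

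The remaining, and main, step is to pin down the $e_{12}$-coordinate, and here I would use Lemma \ref{l2GSa} to write $f$, modulo $Id^{gr}(UJ_{2})$, as a linear combination of monomials of the two listed forms, and evaluate them explicitly with $y_{k}=s_{k}I_{2}+t_{k}e_{12}$ and $z_{j}=c_{j}(e_{11}-e_{22})$, using the multiplication rules $a\circ b=2ab$ on $\mathcal{A}_{1}$, $(e_{11}-e_{22})\circ(e_{11}-e_{22})=2I_{2}$, $(sI_{2}+te_{12})\circ(e_{11}-e_{22})=2s(e_{11}-e_{22})$ and $e_{12}\circ(e_{11}-e_{22})=0$. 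A direct computation then shows: every monomial of form $(1)$ evaluates to $2^{m-1}C(SI_{2}+Te_{12})$ with $T=\sum_{k}t_{k}\prod_{j\neq k}s_{j}$; every monomial of form $(2)$ with at least four non neutral variables evaluates to the scalar matrix $2^{m-1}CSI_{2}$; and every monomial of form $(2)$ with exactly two non neutral variables and ``inner'' neutral variable $y_{i}$ evaluates to $2^{m-1}C(SI_{2}+(T-t_{i}\prod_{j\neq i}s_{j})e_{12})$. Summing and collecting terms, $f(a)=2^{m-1}C\big(\alpha SI_{2}+\big(\sum_{k}(\alpha-\gamma_{k})t_{k}\prod_{j\neq k}s_{j}\big)e_{12}\big)$ for scalars $\gamma_{k}$ read off from $f$ (the constant $\alpha$ minus the sum of the form-$(1)$ coefficients when $N_{z}\geq4$, and the sum of the coefficients of the form-$(2)$ monomials whose inner variable is $y_{k}$ when $N_{z}=2$). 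If $\gamma_{k}=\alpha$ for every $k$, the $e_{12}$-coordinate vanishes identically, so $Im(f)=span\{I_{2}\}$; otherwise, fixing $k_{0}$ with $\gamma_{k_{0}}\neq\alpha$ and taking either all $s_{j}=1$, or $s_{k_{0}}=0$ and the remaining $s_{j}=1$ (while varying $t_{k_{0}}$ and $C$), realises both $\{uI_{2}+ve_{12}\mid u\neq0\}$ and $span\{e_{12}\}$, so $Im(f)=\mathcal{A}_{1}$. In every case $Im(f)$ is one of $\{0\}$, $span\{I_{2}\}$, $span\{e_{12}\}$, $\mathcal{A}_{1}$, hence a homogeneous subspace. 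I expect the bookkeeping in this last step to be the only real obstacle — specifically, verifying that each canonical monomial contributes to the $e_{12}$-coordinate exactly in the stated shape, and tracking whether the inner neutral variable of a form-$(2)$ monomial feeds into that coordinate.
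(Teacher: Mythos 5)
Your proof is correct and follows essentially the same route as the paper's: reduce $f$ modulo the graded identities to the canonical monomials of Lemma \ref{l2GSa}, evaluate on generic homogeneous elements, and read off the diagonal and $(1,2)$ coefficients (your $\alpha-\gamma_{k}$ are the paper's $\beta_{k}$). Your explicit preliminary split on $\alpha=0$ is in fact slightly cleaner, since the paper's concluding linear system implicitly divides by $\alpha$; in the sub-case $\alpha=0$ with some $\beta_{k}\neq0$ the image lies in $span\{e_{12}\}$ rather than being all of $\mathcal{A}_{1}$, which your argument handles correctly.
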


\begin{proof}
We start with a multilinear polynomial $f$ in $m$ neutral variables. Note that a multilinear monomial of degree $m$ evaluated on $\mathcal{A}_{1}$ is a matrix whose main diagonal is given by $2^{m-1}aI_{2}$ where $a$ is the product of the entries on the main diagonal of the matrices used in the evaluation. Hence a matrix in $Im(f)$ must be of the form
\[
\begin{pmatrix}
    2^{m-1}\alpha\cdot a & * \\
     & 2^{m-1}\alpha\cdot a
\end{pmatrix}
\]
where $\alpha$ is the sum of all coefficients of $f$. In case $\alpha=0$, we have $Im(f)\subset span\{e_{12}\}$ and the image is completely determined. From now on we assume $\alpha\neq0$, and we evaluate $m-1$ matrices by $I_{2}$ and one matrix, say $y_{1}$, by $w_{1}(e_{11}+e_{22})+w_{2}e_{12}$ where $w_{1}$, $w_{2}$ are commuting variables. Therefore
\[
f(y_{1},\dots,y_{m})=\begin{pmatrix}
    2^{m-1}\alpha w_{1} & 2^{m-1}\alpha w_{2} \\
     & 2^{m-1}\alpha w_{1}
\end{pmatrix},
\]
and since $char(F)\neq2$ and $\alpha\neq0$, we have that $Im(f)=\mathcal{A}_{1}$.

Now we consider a multilinear polynomial $f$ which has at least one variable of homogeneous degree $g$. In case $\deg f=g$, the image $Im(f)$ is completely determined, since $\dim\mathcal{A}_{g}=1$. So we assume $\deg f=1$. In case $f$ is a multilinear polynomial in variables of homogeneous degree $g$, then $Im(f)$ is contained in the vector space of the scalars matrices, and therefore the image is completely determined. Hence we assume further that $f$ has at least one variable of neutral degree and let $f$ be  a multilinear polynomial in $l$ neutral variables $y_{1}$, \dots, $y_{l}$, and $m-l$ variables $z_{l+1}$, \dots, $z_{m}$ of homogeneous degree $g$. Then by Lemma \ref{l2GSa}, we write $f$ as 
\[
f=\alpha_{1}(y_{1}\cdots y_{l})(z_{l+1}\cdots z_{m})+\sum_{i=1}^{l}\alpha_{i+1}(((y_{i}z_{l+1})z_{l+2})y_{1}\cdots \widehat{y}_{i}\cdots  y_{l})z_{l+3}\cdots z_{m}.
\]
Here $\widehat{y}_{i}$ means that the variable $y_{i}$ does not appear in the product $y_{1}\cdots \widehat{y}_{i}\cdots  y_{l}$.

We replace $y_{i}=w_{1}^{(i)}(e_{11}+e_{22})+w_{2}^{(i)}e_{12}$ and $z_{j}=w_{1}^{(j)}(e_{11}-e_{22})$, where the $w$'s are commuting variables. Note that the Jordan product of two matrices $y_{1}$ and $y_{2}$ is given by $2y_{1}\cdot y_{2}$ where the dot $\cdot$ stands for the usual product of matrices. On the other hand, the usual product of $n$ matrices $y_{1}$,  \dots,  $y_{n}$ is given by 
\[
\begin{pmatrix}
    w_{1}^{(1)}\cdots w_{n}^{(n)} & w  \\
     & w_{1}^{(1)}\cdots w_{n}^{(n)} 
\end{pmatrix}.
\]
Here $w=\displaystyle\sum_{\substack{1\leq i_{1}<\cdots<i_{n-1}\leq n \\
i_{n}\in\{1,\dots,n\}\setminus\{i_{1},\dots,i_{n-1}\}}}w_{1}^{(i_{1})}\cdots w_{1}^{(i_{n-1})}w_{2}^{(i_{n})}$, as one can see by induction on $n$. Hence the image of the monomial $\alpha_{i+1}(((y_{i}z_{l+1})z_{l+2})y_{1}\cdots \widehat{y}_{i}\cdots  y_{l})z_{l+3}\cdots z_{m}$ is equal to
\[
2^{m-1}\alpha_{i+1}\begin{pmatrix}
   w_{1}^{(l)}\cdots w_{1}^{(l)}w_{1}^{(l+1)}\cdots w_{1}^{(m)}  & w_{i}w_{1}^{(l+1)}\cdots w_{1}^{(m)} \\
     & w_{1}^{(l)}\cdots w_{1}^{(l)}w_{1}^{(l+1)}\cdots w_{1}^{(m)}
\end{pmatrix}
\]
where $w_{i}$ is given as $w$ above but $i_{n}\neq i$. 
 
Therefore the main diagonal of $f(y_{1},\dots,y_{l},z_{l+1},\dots,z_{m})$ is given by
\[
2^{m-1}\alpha w_{1}^{(1)}\cdots w_{1}^{(m)}
\]
where $\alpha$ is the sum of all coefficients in $f$. The entry at position $(1,2)$ is 
 \[
 \displaystyle\sum_{\substack{1\leq i_{1}<\cdots<i_{l-1}\leq l \\
k\in\{1,\dots,l\}\setminus\{i_{1},\dots,i_{l-1}\}}}\beta_{k}w_{1}^{(i_{1})}\cdots w_{1}^{(i_{l-1})}w_{2}^{(k)}w_{1}^{(l+1)}\cdots w_{1}^{(m)}
 \]
 where $\beta_{k}=\alpha_{1}+\displaystyle\sum_{\substack{j=1 \\ j\neq k}}^{l}\alpha_{j+1}$.

If all $\beta_{k}$ are equal to zero, then $Im(f)$ is contained in the space of the scalar matrices and we are done. So we may assume that some of the $\beta_{k}$ is nonzero, and without loss of generality we suppose $\beta_{l}\neq0$. In this case we claim that the image will be the whole neutral component. Indeed, take $A=a_{1}(e_{11}+e_{22})+a_{2}e_{12}\in \mathcal{A}_{1}$. We evaluate the matrices of degree $g$ by $e_{11}-e_{22}$, that is, we take $w_{1}^{(l+1)}=\cdots=w_{1}^{(m)}=1$. We also evaluate the neutral matrices $y_{1}$, \dots, $y_{l-1}$ by the identity matrix, that is, we take $w_{1}^{(1)}=\cdots = w_{1}^{(l-1)}=1$ and $w_{2}^{(1)}=\cdots =w_{2}^{(l-1)}=0$. Hence the equality
\[
f(I_{2},\dots,I_{2},y_{l},e_{11}-e_{22},\dots,e_{11}-e_{22})=A
\]
 leads us to the following linear system
 \[
 \left\{\begin{array}{c}
     2^{m-1}\alpha w_{1}^{(l)}=a_{1}  \\
     2^{m-1}\beta_{l}w_{2}^{(l)}=a_{2} 
 \end{array}\right.
 \]
 which has $w_{1}^{(1)}=(2^{m-1}\alpha)^{-1}a_{1}$ and $w_{2}^{(2)}=(2^{m-1}\beta_{1})^{-1}a_{2}$ as the solution.
\end{proof} 
 
\begin{thm}
Let $UJ_{2}=\bigoplus_{g\in G}\mathcal{A}_{g}$ be a non trivial $G$-grading and let $f\in J(X)$ be a multilinear graded  Jordan polynomial. Then $Im(f)$ on $UJ_{2}$ is a homogeneous subspace. The same conclusion also holds for the trivial grading on $UJ_{2}$ for arbitrary fields.
\end{thm}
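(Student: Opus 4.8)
The plan is to deduce the theorem from the classification of gradings on $UJ_{2}$ together with Lemmas \ref{l1jordan}, \ref{l2jordan} and \ref{l3jordan}, and to treat directly the two gradings those lemmas do not cover. By the description of the gradings on $UJ_{2}$ recalled above, every $G$-grading on $UJ_{2}$ is graded isomorphic to the trivial grading (I)(a) or to one of (I)(b), (II)(a), (II)(b), (II)(c). A graded isomorphism is in particular a graded epimorphism, so Proposition \ref{lowprop} --- which by Remark \ref{remarkprop} holds for Jordan algebras as well --- shows that the property ``$Im(f)$ is a homogeneous subspace'' is inherited by an arbitrary grading from its listed representative. Hence it suffices to verify the statement for these five gradings; for (I)(b), (II)(a) and (II)(b) this is precisely Lemmas \ref{l1jordan}, \ref{l2jordan} and \ref{l3jordan}, so only (II)(c) and the trivial grading remain.

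For (II)(c) the argument is short. The homogeneous components $\mathcal{A}_{1}=FI_{2}$, $\mathcal{A}_{g}=F(e_{11}-e_{22})$ and $\mathcal{A}_{h}=Fe_{12}$ are all one-dimensional and the support $\{1,g,h\}$ is abelian, so by Proposition \ref{basicprop}(4) the image of a multilinear $f$ is a homogeneous subset contained in the single component indexed by the product of the degrees of the variables occurring in $f$. Since $f$ is multilinear, $\lambda\,Im(f)\subset Im(f)$ for every $\lambda\in F$, and Proposition \ref{basicprop}(1) then forces $Im(f)$ to be $\{0\}$ or that whole one-dimensional component; when the component in question is $\mathcal{A}_{1}$ one may instead quote Proposition \ref{basicprop}(2), since $I_{2}\in\mathcal{A}_{1}$. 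In either case $Im(f)$ is a homogeneous subspace.

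The trivial grading is the case I expect to look hardest at first glance: there the underlying algebra $UJ_{2}$ is three-dimensional and does carry multilinear polynomials whose image is a proper nonzero subspace --- for instance the Jordan associator $(x_{1}\circ x_{2})\circ x_{3}-x_{1}\circ(x_{2}\circ x_{3})$, whose image on $UJ_{2}$ is $Fe_{12}$ --- so one might fear a normal form for multilinear Jordan polynomials modulo $Id(UJ_{2})$ is required. The clean way around this is to use the two maps $\pi_{11},\pi_{22}\colon UT_{2}\to F$ assigning to an upper triangular matrix its $(1,1)$- and $(2,2)$-entry: these are associative algebra homomorphisms, hence Jordan homomorphisms from $UJ_{2}$ onto the one-dimensional Jordan algebra $F$. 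Writing $\alpha$ for the sum of the coefficients of $f$ and using that on the one-dimensional Jordan algebra every multilinear monomial of degree $m$ equals $2^{m-1}$ times the product of its arguments, one gets $\pi_{ii}\big(f(a_{1},\dots,a_{m})\big)=2^{m-1}\alpha\prod_{k}\pi_{ii}(a_{k})$ for $i=1,2$. If $\alpha\neq0$ then $f(I_{2},\dots,I_{2})=2^{m-1}\alpha I_{2}\neq0$ and Proposition \ref{basicprop}(2) gives $Im(f)=UJ_{2}$; if $\alpha=0$ then $\pi_{11}$ and $\pi_{22}$ annihilate $Im(f)$, so $Im(f)\subset Fe_{12}$ and Proposition \ref{basicprop}(1) gives $Im(f)\in\{\{0\},Fe_{12}\}$. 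In all cases $Im(f)$ is a subspace, and only $\mathrm{char}\,F\neq2$ is used, so this covers all admissible fields. The remaining obstacle is purely organizational --- applying the reduction via graded isomorphisms correctly --- since the computational content is already in the cited lemmas and in the $\pi_{ii}$-trick above.
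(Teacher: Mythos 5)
Your proposal is correct. For the non-trivial gradings it follows the paper's own route: reduction to the listed representatives via Proposition \ref{lowprop} and Remark \ref{remarkprop}, then Lemmas \ref{l1jordan}, \ref{l2jordan}, \ref{l3jordan}, with (II)(c) handled by one-dimensionality of the components (you are in fact slightly more careful than the paper here, since you note that a homogeneous \emph{subset} of a one-dimensional component still needs Proposition \ref{basicprop}(1), i.e.\ closure under scalars coming from multilinearity, to be a subspace). Where you genuinely diverge is the trivial grading: the paper invokes Slinko's theorem that $J(X)/Id(UJ_{2})$ is special, so that $f$ may be replaced by an associative multilinear polynomial and the known classification of images on $UT_{2}$ applies, yielding $Im(f)\in\{J,UJ_{2}\}$ for $f\notin Id(UJ_{2})$. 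You instead argue directly with the two diagonal projections $\pi_{11},\pi_{22}$, which are Jordan epimorphisms onto $F$, obtaining $\pi_{ii}(f(a_{1},\dots,a_{m}))=2^{m-1}\alpha\prod_{k}\pi_{ii}(a_{k})$ with $\alpha$ the sum of coefficients; the dichotomy $\alpha\neq0$ versus $\alpha=0$ then gives $Im(f)=UJ_{2}$ (via the unit substitution, with the harmless normalization by $2^{m-1}\alpha$) or $Im(f)\subset Fe_{12}$, settled by Proposition \ref{basicprop}(1). Both arguments are valid for any field of characteristic different from $2$; yours is more elementary and self-contained, avoiding the speciality theorem, while the paper's transfers the whole problem to the already-classified associative case in one stroke. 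The only cosmetic caveat is that Proposition \ref{basicprop}(2) is stated in the associative free algebra, so in the Jordan setting you should make explicit (as you essentially do) that each multilinear monomial of degree $m$ picks up the factor $2^{m-1}$ when all but one argument is the identity.
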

 
\begin{proof}
We first consider a non trivial grading on $UJ_{2}$. By Remark \ref{remarkprop} we may reduce the defined grading on $UJ_{2}$ to one of those described above. We note that the case of the grading (II)(c) follows from the fact that $Im(f)$ on $UJ_{2}$ is a homogeneous subset and all homogeneous components in this grading are one dimensional. We use Lemmas \ref{l1jordan},\ref{l2jordan} and \ref{l3jordan} for the remaining non trivial gradings. Now we consider the trivial grading on $UJ_{2}$. Let $f\in J(X)$ be a multilinear polynomial. We may assume that $f\notin Id(UJ_{2})$. By \cite{Sli}, the algebra $J(X)/Id(UJ_{2})$ is a special Jordan algebra,  and hence we may assume $f$ is an element in the free special Jordan algebra. Therefore, the image $Im(f)$ on $UJ_{2}$ is equal to the image of some associative polynomial on $UT_{2}$. Hence $Im(f)\in\{J,UJ_{2}\}$.
\end{proof}

\begin{rem}
Consider the Lie algebra $UT_{n}^{(-)}$ with product given by the Lie bracket. Given a grading on $UT_{n}^{(-)}$, note that $J=[UT_{n}^{(-)},UT_{n}^{(-)}]$ is always a homogeneous ideal. We also note that if $f\in L(X)^{gr}$ is a multilinear polynomial of degree $\ge 2$, then $Im(f)$ on $UT_{n}^{(-)}$ is contained in $J$. In particular, for $n=2$ we must have that $Im(f)$ is contained in $span\{e_{12}\}$ which is a homogeneous subspace. Since the image of multilinear polynomials of degree $1$ is trivial, we have that $Im(f)$ on the graded algebra $UT_{2}^{(-)}$ is always a homogeneous subspace, regardless of the grading defined on $UT_{2}^{(-)}$.
\end{rem}

\subsection{The natural elementary $\Z_{3}$-grading in the Jordan algebra $UJ_{3}$}

In this section we study images of multilinear polynomials on the Jordan algebra $\mathcal{A}=UJ_{3}$ endowed with the elementary $\Z_{3}$-grading given by the sequence $(\overline{0},\overline{1},\overline{2})$, that is, $\mathcal{A}_{\overline{0}}=span\{e_{11},e_{22},e_{33}\}$, $\mathcal{A}_{\overline{1}}=span\{e_{12},e_{23}\}$, and $\mathcal{A}_{\overline{2}}=span\{e_{13}\}$. 

We denote by $(x_{1},x_{2},x_{3})=(x_{1}x_{2})x_{3}-x_{1}(x_{2}x_{3})$ the associator of the elements $x_{1}$, $x_{2}$, $x_{3}$.

We recall the following identity which holds in any Jordan algebra.

\begin{lem}\label{lJacobson}
Let $\mathcal{J}$ be a Jordan algebra. Then
\[
abcd+adcb+bdca=(ab)(cd)+(ac)(bd)+(ad)(bc)
\]
for all $a$, $b$, $c$, $d\in \mathcal{J}$.
\end{lem}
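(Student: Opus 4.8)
The plan is to identify the asserted equality as nothing but the full linearisation, in the single variable $x$, of the defining Jordan identity. Recall that a Jordan algebra $\mathcal{J}$ is a commutative algebra satisfying $(x^{2}y)x=x^{2}(yx)$ for all $x,y$, where $x^{2}:=x\circ x$, with the symbol $\circ$ suppressed and products read left-normed, as in the text. My strategy is: substitute $x=x_{1}+x_{2}+x_{3}$ into this identity, isolate on each side the component that is multilinear in $x_{1},x_{2},x_{3}$, cancel a common scalar, and finally relabel the variables so as to match the statement.

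Carrying this out: expanding $x^{2}$ with $x=x_{1}+x_{2}+x_{3}$ and using commutativity gives $x^{2}=\sum_{i}x_{i}^{2}+2\sum_{i<j}x_{i}x_{j}$. On the left-hand side $(x^{2}y)x$, the part linear in each of $x_{1},x_{2},x_{3}$ must take one variable from the trailing factor $x$ and the (symmetrised) product of the other two from $x^{2}$; this produces $2\bigl(((x_{1}x_{2})y)x_{3}+((x_{1}x_{3})y)x_{2}+((x_{2}x_{3})y)x_{1}\bigr)$. Treating the right-hand side $x^{2}(yx)$ the same way — one variable now coming from $yx$, the other two from $x^{2}$ — yields $2\bigl((x_{1}x_{2})(yx_{3})+(x_{1}x_{3})(yx_{2})+(x_{2}x_{3})(yx_{1})\bigr)$. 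Since the original identity holds for every value of $x$, its multilinear component is again an identity (the extraction is an alternating sum of substitution instances, hence legitimate over any field), so after cancelling the factor $2$ — the sole place the hypothesis $char(F)\neq2$ intervenes — one obtains
\[
((x_{1}x_{2})y)x_{3}+((x_{1}x_{3})y)x_{2}+((x_{2}x_{3})y)x_{1}=(x_{1}x_{2})(yx_{3})+(x_{1}x_{3})(yx_{2})+(x_{2}x_{3})(yx_{1}).
\]
It then remains to set $(x_{1},x_{2},x_{3},y)=(a,b,d,c)$ and invoke commutativity of the inner products ($cb=bc$, $ca=ac$) and of the outer product: the left side becomes $abcd+adcb+bdca$ and the right side becomes $(ab)(cd)+(ac)(bd)+(ad)(bc)$, which is exactly the claim.

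I expect that the only genuine work is the bookkeeping in the linearisation step — tracking from which factor each of $x_{1},x_{2},x_{3}$ originates and applying commutativity at the right moments, there being no associativity to fall back on — together with the observation that the characteristic hypothesis is used solely to divide by $2$. As a cross-check one may instead verify the identity inside a special Jordan algebra by expanding both sides in an associative envelope, where both are seen to equal $\frac{1}{8}\sum_{\sigma\in S_{4}}\sigma(abcd)$; transporting this to arbitrary Jordan algebras would, however, require the (nontrivial) fact that there is no $s$-identity of degree $4$, so the direct linearisation above is the route I would actually take, being entirely self-contained.
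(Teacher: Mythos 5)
Your proof is correct. The paper itself offers no argument here: it simply cites Jacobson's book (``See for example [Jac, Page 34]''), so any self-contained derivation is by definition a different route, and yours is the standard one — full linearisation of the Jordan identity $(x^{2}y)x=x^{2}(yx)$ in the variable $x$. I checked the bookkeeping: with the left-normed convention, setting $(x_{1},x_{2},x_{3},y)=(a,b,d,c)$ in your linearised identity gives $((ab)c)d+((ad)c)b+((bd)c)a$ on the left, i.e.\ $abcd+adcb+bdca$, and $(ab)(cd)+(ad)(cb)+(bd)(ca)$ on the right, which commutativity turns into $(ab)(cd)+(ac)(bd)+(ad)(bc)$ — exactly the claim. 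The extraction of the multilinear component via an alternating sum of substitution instances is valid over any ring, and the sole division by $2$ is covered by the standing hypothesis $char(F)\neq 2$ in the Jordan sections of the paper (and is unavoidable: linear Jordan algebras are only sensibly defined away from characteristic $2$). What your write-up buys over the paper's bare citation is that the reader sees the identity is nothing deeper than the linearised Jordan axiom; what the citation buys is brevity. Your closing remark about the special-envelope cross-check is also correctly hedged — transporting an associative computation back to arbitrary Jordan algebras would indeed require knowing there are no $s$-identities in degree $4$, so the direct linearisation is the right choice.
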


\begin{proof}
See for example \cite[Page 34]{Jac}.
\end{proof}

As an easy consequence of Lemma \ref{lJacobson} we have 
\begin{equation}\label{iJacobson}
abcd+adcb+bdca=abdc+acdb+bcda
\end{equation}
for every $a$, $b$, $c$, $d\in \mathcal{J}$.

The next lemma point out some graded identities for the algebra $UJ_{3}$.

\begin{lem}\label{lidentitiesjordan}
The identities 
\[
(y_{1},y_{2},y_{3})\equiv 0, (y_{1},z,y_{2})\equiv 0 \ \mbox{and} \ z_{1}z_{2}\equiv 0
\]
hold for $UJ_{3}$, where $z$ is an odd variable and $\deg(z_{1})+\deg(z_{2})=\overline{0}$.
\end{lem}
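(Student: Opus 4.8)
The plan is to verify each of the three claimed graded identities directly by evaluating on the homogeneous components, using the explicit description $\mathcal{A}_{\overline{0}}=\mathrm{span}\{e_{11},e_{22},e_{33}\}$, $\mathcal{A}_{\overline{1}}=\mathrm{span}\{e_{12},e_{23}\}$, $\mathcal{A}_{\overline{2}}=\mathrm{span}\{e_{13}\}$. Since all three identities are multilinear it suffices to check them on basis elements (matrix units) lying in the appropriate components, and since the Jordan product $a\circ b=ab+ba$ only ever produces matrices in $UT_3$, every evaluation reduces to a short computation with matrix units $e_{ij}$, $i\le j$.

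First I would dispose of $z_1z_2\equiv 0$ with $\deg(z_1)+\deg(z_2)=\overline{0}$: the only way two nonzero degrees in $\mathbb{Z}_3$ sum to $\overline{0}$ is $\{\overline{1},\overline{2}\}$, so up to symmetry $z_1\in\mathcal{A}_{\overline{1}}$ is a combination of $e_{12},e_{23}$ and $z_2\in\mathcal{A}_{\overline{2}}$ is a multiple of $e_{13}$. Then $e_{12}\circ e_{13}=e_{12}e_{13}+e_{13}e_{12}=0$ and $e_{23}\circ e_{13}=0$ as well (all four products vanish for index reasons), so $z_1\circ z_2=0$ by bilinearity. Next, for $(y_1,y_2,y_3)\equiv 0$: the subalgebra $\mathcal{A}_{\overline{0}}$ of diagonal matrices is associative and commutative (it is just $F^3$ with componentwise operations, up to the factor $2$ coming from $a\circ b=2ab$ on the diagonal), so its associator vanishes identically; one just observes $a\circ b = 2ab$ for diagonal $a,b$ and hence $(a\circ b)\circ c = 4abc = a\circ(b\circ c)$.

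The slightly more substantive piece is $(y_1,z,y_2)\equiv 0$, where $y_1,y_2\in\mathcal{A}_{\overline{0}}$ and $z$ is odd. By multilinearity I would take $y_1=e_{ii}$, $y_2=e_{jj}$ and $z$ a single matrix unit among $e_{12},e_{23},e_{13}$, and compute both $(y_1\circ z)\circ y_2$ and $y_1\circ(z\circ y_2)$. For a matrix unit $e_{kl}$ with $k<l$ one has $e_{ii}\circ e_{kl}=e_{ii}e_{kl}+e_{kl}e_{ii}=\delta_{ik}e_{kl}+\delta_{il}e_{kl}$, i.e. $e_{ii}$ acts on $e_{kl}$ by the scalar $\delta_{ik}+\delta_{il}$; since $k\ne l$ this scalar is independent of the order of multiplication, so multiplication by two diagonal matrices on either side of an off-diagonal matrix unit commutes, giving $(e_{ii}\circ e_{kl})\circ e_{jj}=(\delta_{ik}+\delta_{il})(\delta_{jk}+\delta_{jl})e_{kl}=e_{ii}\circ(e_{kl}\circ e_{jj})$. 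Hence the associator vanishes on basis elements and therefore identically. I do not anticipate a real obstacle here; the only thing to be careful about is that ``odd variable'' allows $z$ to have degree $\overline{1}$ or $\overline{2}$, but both cases are covered since in either case $z$ is a linear combination of off-diagonal matrix units, which is exactly what the computation uses. One could alternatively phrase the whole lemma abstractly: $\mathcal{A}$ is a subalgebra of the (associative, hence special Jordan) algebra $UT_3$, and all three statements follow from $e_{ii}$ acting as a scalar on each off-diagonal $e_{kl}$ together with $e_{12}e_{13}=e_{23}e_{13}=e_{13}e_{12}=e_{13}e_{23}=0$; I would include the concrete matrix-unit computation as the cleanest justification.
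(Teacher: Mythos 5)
Your verification is correct and is exactly the ``straightforward computation'' that the paper omits: reducing by multilinearity to matrix units, noting that $e_{ii}$ acts by the scalar $\delta_{ik}+\delta_{il}$ on each off-diagonal $e_{kl}$ (so left/right diagonal actions commute, killing $(y_1,z,y_2)$), that the diagonal component is commutative and associative (killing $(y_1,y_2,y_3)$), and that all products between $\mathcal{A}_{\overline{1}}$ and $\mathcal{A}_{\overline{2}}$ vanish for index reasons. No gaps; this matches the intended argument.
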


\begin{proof}
A straightforward computation, hence omitted.
\end{proof}

The next lemma has the same proof as \cite[Lemma 5.3]{GSa}. However we will consider its proof here for the sake of completeness.

\begin{lem}
The polynomial 
\[
g=y_{1}(y_{2}(y_{3}z))-\frac{1}{2}\bigg(y_{1}(z(y_{2}y_{3})) +y_{2}(z(y_{1}y_{3}))+y_{3}(z(y_{1}y_{2}))-z(y_{1}(y_{2}y_{3}))\bigg)
\]
is a consequence of $(y_{1},z,y_{2})$, where $\deg(z)\in\{\overline{1},\overline{2}\}$.
\end{lem}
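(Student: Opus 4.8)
The plan is to work in the free Jordan algebra modulo the identity $(y_1,z,y_2)\equiv 0$, that is, to show that $g$ lies in the $T_G$-ideal generated by $(y_1,z,y_2)$. The linearized associator identity means that whenever an odd variable $z$ sits strictly between two neutral expressions in a left-normed product, we may freely reassociate, i.e. $(uz)y = u(zy)$ modulo the ideal, for any expressions $u$ (neutral or containing $z$) and $y$ neutral. First I would observe that the whole computation is really an identity in the \emph{free special} Jordan algebra: by Lemma~\ref{lJacobson} (Jacobson's identity) and its consequence \eqref{iJacobson}, together with the associator relation, every rearrangement we need is available. So the strategy is purely to massage $y_1(y_2(y_3 z))$ into the symmetric combination on the right-hand side using only (a) commutativity of the Jordan product, (b) Jacobson's identity \eqref{iJacobson} applied to suitable four-tuples, and (c) the relation $(y_i,z,y_j)\equiv 0$.

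The key steps, in order: (1) Apply Jacobson's identity (Lemma~\ref{lJacobson}) with $a=y_1$, $b=y_2$, $c=y_3$, $d=z$ to express the left-normed product $y_1 y_2 y_3 z$ (in some fixed association) as a combination of products of two Jordan squares like $(y_1y_2)(y_3z)$, $(y_1y_3)(y_2z)$, $(y_1z)(y_2y_3)$ minus the other two left-normed terms $y_1 z y_3 y_2$ and $y_2 z y_3 y_1$. (2) In each of the left-normed terms that now has $z$ \emph{not} in the last slot, use the associator identity $(y_i,z,y_j)\equiv 0$ to slide the neutral variables past $z$ and bring each such term to the form $y_i(z(y_j y_k))$ or $z(\cdots)$; here commutativity of the Jordan product lets us also rewrite $(y_i y_j) z = z(y_i y_j)$ etc. (3) Collect terms: the products of squares $(y_i z)(y_j y_k)$ become, after commuting, $y_i(z(y_jy_k))$-type terms with the help of the associator relation (since $(y_iz)(y_jy_k)$ and $y_i(z(y_jy_k))$ differ by $(y_i, z, y_jy_k)$, which vanishes because $y_jy_k$ is neutral). (4) Compare coefficients and verify that everything assembles into exactly $y_1(z(y_2y_3)) + y_2(z(y_1y_3)) + y_3(z(y_1y_2)) - z(y_1(y_2y_3))$ with the factor $\tfrac12$, matching the definition of $g$.

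The main obstacle I anticipate is bookkeeping rather than conceptual: one must be careful about which association of the left-normed product one starts from (the paper's convention is $abc=(ab)c$), and about the fact that $y_1(y_2(y_3z))$ is a \emph{right}-normed-looking product, so an initial step is to rewrite it via commutativity as a left-normed product (using $u(vw)=(vw)u$ and repeatedly) up to terms that are themselves associators involving neutral variables only — but those vanish by $(y_1,y_2,y_3)\equiv 0$, which also holds in $UJ_3$ (Lemma~\ref{lidentitiesjordan}), or more safely by working in the special envelope where associativity is available and the ideal generated by $(y_i,z,y_j)$ has a clean associative meaning. The cleanest route is therefore: pass to the free special Jordan algebra, write all Jordan products in terms of the associative product $a\circ b = ab+ba$, observe that modulo $(y_i,z,y_j)\equiv 0$ the odd variable $z$ behaves "associatively" with respect to the neutral ones, and then the claimed identity becomes a finite linear-algebra check on associative words with one distinguished letter $z$ and three neutral letters. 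I would carry out that check and confirm the coefficient $\tfrac12$, which comes precisely from the $a\circ b = ab + ba$ doubling. This mirrors the argument of \cite[Lemma~5.3]{GSa}, so I would follow their computation essentially verbatim, substituting our grading constraints where needed.
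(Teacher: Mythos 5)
Your overall strategy --- combine Jacobson's identity with the relation $(y_{1},z,y_{2})\equiv 0$ and its substitution instances --- is the same as the paper's, and several of your individual reductions are sound; in particular $(y_{i}z)(y_{j}y_{k})\equiv y_{i}(z(y_{j}y_{k}))$ via the instance $(y_{i},z,y_{j}y_{k})$, and sliding neutral letters past an interior $z$. (Also note that $y_{1}(y_{2}(y_{3}z))=((y_{3}z)y_{2})y_{1}$ already by commutativity alone, so no associator is needed to left-normalize the first term.) The genuine gap is elsewhere: applying Lemma \ref{lJacobson} with $a=y_{1}$, $b=y_{2}$, $c=y_{3}$, $d=z$ produces the left-normed word $((y_{1}y_{2})y_{3})z$, and to identify it with the term $z(y_{1}(y_{2}y_{3}))$ of $g$ you need $((y_{1}y_{2})y_{3})z=(y_{1}(y_{2}y_{3}))z$, i.e.\ the \emph{neutral} associator identity $(y_{1},y_{2},y_{3})\equiv 0$. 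You invoke exactly that (``those vanish by $(y_{1},y_{2},y_{3})\equiv 0$''), but the lemma asserts that $g$ is a consequence of $(y_{1},z,y_{2})$ \emph{alone}, and $(y_{1},y_{2},y_{3})z$ is not (at least not obviously) in the $T_{G}$-ideal generated by $(y_{1},z,y_{2})$. So your argument only shows that $g$ lies in the larger ideal $T$ generated by all the identities of Lemma \ref{lidentitiesjordan} --- enough for the application in Lemma \ref{ljordanUJ3}, but weaker than the statement. The paper avoids this by substituting $a=y_{2}$, $b=y_{3}$, $c=z$, $d=y_{1}$ into the rearranged identity (\ref{iJacobson}); with that choice the only word having $z$ in the last slot is $((y_{2}y_{3})y_{1})z$, which equals $z(y_{1}(y_{2}y_{3}))$ by commutativity alone, and one arrives at the explicit decomposition $2g=(y_{3},z,y_{2})y_{1}+(y_{2},zy_{3},y_{1})+(y_{3},zy_{2},y_{1})$, each summand visibly a consequence of $(y_{1},z,y_{2})$.

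Your fallback of passing to the free special Jordan algebra has the same defect in sharper form: a computation in the associative envelope shows that $g$ vanishes on special Jordan algebras satisfying the relations, but it does not establish membership of $g$ in a $T$-ideal of the \emph{free} Jordan algebra. Again this would suffice for $UJ_{3}$, which is special, but not for the lemma as stated. To repair the proof, either choose the Jacobson substitution as above so that no reassociation of purely neutral letters is ever needed, or exhibit $2g$ directly as the paper does.
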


\begin{proof}
By identity (\ref{iJacobson}) we have
\[
-((y_{2}y_{3})z)y_{1}-((y_{1}y_{3})z)y_{2}-((y_{1}y_{2})z)y_{3}+((y_{2}y_{3})y_{1})z=-((y_{2}z)y_{1})y_{3}-((y_{3}z)y_{1})y_{2}.
\]
Hence, we can write $h=2g$ as
\begin{align*}
 h= & \ 2(y_{1}(y_{2}(y_{3}z))-((y_{2}z)y_{1})y_{3}-((y_{3}z)y_{1})y_{2}\\
 = & \ (y_{3},z,y_{2})y_{1}+(y_{2},zy_{3},y_{1})+(y_{3},zy_{2},y_{1})
\end{align*}
which implies that $g$ is a consequence of $(y_{1},z,y_{2})$.
\end{proof}

Given two even variables $y_{i}$ and $y_{j}$ we set $y_{i}<y_{j}$ if $i<j$. Hence we define an order on words in even variables $Y_{1}<Y_{2}$ considering the left lexicographic order in case $Y_{1}$ and $Y_{2}$ have the same length, and $Y_{1}<Y_{2}$ in case $Y_{2}$ is longer than $Y_1$.  For the next lemma we use ideas from \cite[Lemma 5.6]{GSa}. We denote by $T$ the $T$-ideal generated by the identities from Lemma \ref{lidentitiesjordan}.

\begin{lem}\label{ljordanUJ3}
Let $f=f(y_{1},\dots,y_{m-1},z_{m})\in J(X)$ be a multilinear polynomial, where $\deg(z_{m})\in \{\overline{1},\overline{2}\}$. Then modulo $T$, $f$ is a linear combination of monomials of the form $Y_{1}(zY_{2})$, where each $Y_{i}$ is an increasingly ordered product of even variables and $Y_{1}<Y_{2}$. 
\end{lem}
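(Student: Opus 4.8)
The plan is to mimic the normal-form argument of \cite[Lemma 5.6]{GSa}, reducing an arbitrary multilinear monomial in $y_1,\dots,y_{m-1},z_m$ to a linear combination of the required shape $Y_1(zY_2)$ by repeated rewriting modulo $T$. First I would observe that, since $z_m$ is the only odd variable, the identity $z_1z_2\equiv 0$ plays no role here; the relevant identities are $(y_1,y_2,y_3)\equiv 0$ and $(y_1,z,y_2)\equiv 0$, together with commutativity of the Jordan product. Using the associator identity $(y_1,y_2,y_3)\equiv 0$ one can freely rearrange and re-bracket any product consisting solely of even variables, so that any maximal ``even block'' not containing $z$ may be normalised to an increasingly ordered left-normed word. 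Using $(y_1,z,y_2)\equiv 0$ one can move an even variable ``across'' $z$ in a product $(\cdots z)y_j$, trading it for $z(\cdots y_j)$ up to sign/scalar; this is exactly what the polynomial $g$ from the preceding lemma is for. The combined effect of these moves should push everything into the form: an ordered even word $Y_1$ acting on the left on a product $zY_2$ with $Y_2$ an ordered even word.

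The key steps, in order, would be: (1) Set up the induction on the total degree $m$ (or on the ``complexity'' of the bracketing of a monomial), with base case $m=1$ trivial. (2) Given a monomial $w$, locate the innermost subproduct containing $z_m$; by Jordan commutativity assume it is written with $z_m$ on the right. (3) Apply the lemma on $g$ (consequence of $(y_1,z,y_2)$) to rewrite any nested product of even variables applied successively to $z_m$, namely $y_{i_1}(y_{i_2}(\cdots(y_{i_k}z_m)\cdots))$, as a linear combination of terms of the form $Y(zY')$ with $Y,Y'$ shorter even words — this is the engine that produces the target shape. (4) Use $(y_1,y_2,y_3)\equiv 0$ to straighten out each even word $Y_1$, $Y_2$ into increasing order, and to absorb any leftover even variables multiplied on the outside into $Y_1$. (5) Finally, use commutativity $Y_1(zY_2)=(zY_2)Y_1$ if needed, and the straightening identity, to impose the normalisation $Y_1<Y_2$ in the chosen order on even words (swapping $Y_1$ and $Y_2$ in the product $Y_1(zY_2)$ is harmless because the Jordan product is commutative, but one must check this swap is compatible with keeping the $z$ on the correct side — likely one shows that both $Y_1(zY_2)$ and $Y_2(zY_1)$ span the same space, so one may always pick the representative with $Y_1<Y_2$).

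The main obstacle I expect is step (3)–(4): controlling the \emph{bracketing} of an arbitrary multilinear Jordan monomial. Unlike the associative case, a Jordan monomial is a full binary tree of products, and one cannot simply ``read off'' where $z_m$ sits relative to the even variables. The real work is an induction showing that any such tree, modulo $T$, collapses to the two-factor shape $Y_1\cdot(z_m Y_2)$; the identity for $g$ handles the case where even variables are layered one-at-a-time onto $z_m$, but for a general tree one must first use $(y_1,y_2,y_3)\equiv 0$ to reorganise the even subtrees into left-normed words and then repeatedly apply $g$ and $(y_1,z,y_2)\equiv 0$. Keeping track of the coefficients through these rewritings is routine but tedious; since the statement only asserts that $f$ \emph{is} such a linear combination (no control on coefficients is claimed), I would not track them explicitly and instead argue at the level of spanning sets: the monomials $Y_1(zY_2)$ with $Y_1<Y_2$ span the multilinear component of $J(X)/T$ of the relevant multidegree, which suffices.
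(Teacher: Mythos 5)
Your proposal is correct and follows essentially the same route as the paper: an induction on the degree that uses $(y_1,y_2,y_3)\equiv 0$ to straighten purely even blocks, the polynomial $g$ (the consequence of $(y_1,z,y_2)\equiv 0$) as the engine to convert a nested product over $z_m$ into a sum of two-factor monomials $Y(zY')$, and commutativity together with $(y_1,z,y_2)\equiv 0$ to impose the normalisation $Y_1<Y_2$. The paper organises the induction slightly more cleanly by writing a monomial as $f=gh$ with $z_m$ in $g$ and $h$ a purely even word, so the induction hypothesis plus the single identity $(Y_2(zY_3))Y_1=\tfrac12\bigl(Y_1(z(Y_2Y_3))+Y_2(z(Y_1Y_3))+Y_3(z(Y_1Y_2))-z(Y_1(Y_2Y_3))\bigr)$ finishes the step, but this is the same argument you outline.
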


\begin{proof}
It is enough to consider $f=f(y_{1},\dots,y_{m-1},z_{m})$ as a monomial. We apply induction on $m$. If $m=1$ or $m=2$, then the conclusion is obvious. So we assume $m\geq 3$ and we write $f=gh$ where $g$, $h\in J(X)$. Without loss of generality we may assume that the odd variable $z_{m}$ occurs in $g$. Hence $h=Y_{1}$ and by the induction hypothesis we must have $g$ as a linear combination of monomials of the form $Y_{2}(zY_{3})$. On the other hand, we have
\[
(Y_{2}(zY_{3}))Y_{1}=\frac{1}{2}\bigg(Y_{1}(z(Y_{2}Y_{3}))+Y_{2}(z(Y_{1}Y_{3})+Y_{3}(z(Y_{1}Y_{2}))-z(Y_{1}(Y_{2}Y_{3}))\bigg).
\]
Now it is enough to use the identities $(y_{1},y_{2},y_{3})\equiv0$,  $(y_{1},z,y_{2})\equiv0$ and the commutativity of the Jordan product to get the desired conclusion. 
\end{proof}

Now we are ready to prove the main theorem of this subsection.

\begin{thm}
Let $F$ be an infinite field of characteristic different from 2 and let $f\in J(X)$ be a multilinear graded  polynomial. Then the image of $f$ on the graded Jordan algebra $UJ_{3}$ endowed with the natural elementary $\Z_{3}$-grading is either $\{0\}$ or some homogeneous component.
\end{thm}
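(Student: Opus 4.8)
The plan is to split a multilinear graded polynomial $f$ according to the homogeneous degrees of its variables and reduce, via Proposition~\ref{basicprop} and the graded identities in Lemma~\ref{lidentitiesjordan}, to a small number of essentially different cases. By Proposition~\ref{basicprop}(4) (the support of the natural $\Z_3$-grading is $\Z_3$, abelian), $Im(f)$ is always a homogeneous subset, so the total degree of $f$ determines which component contains $Im(f)$: if $\deg f = \overline{1}$ then $Im(f)\subseteq\mathcal{A}_{\overline{1}}$, if $\deg f=\overline{2}$ then $Im(f)\subseteq\mathcal{A}_{\overline{2}}=span\{e_{13}\}$, and if $\deg f=\overline{0}$ then $Im(f)\subseteq\mathcal{A}_{\overline{0}}$. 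By the identity $z_1z_2\equiv 0$ whenever $\deg z_1+\deg z_2=\overline{0}$, together with $\mathcal{A}_{\overline{1}}\mathcal{A}_{\overline{1}}\subseteq\mathcal{A}_{\overline{2}}$ and $\mathcal{A}_{\overline{2}}\mathcal{A}_{\overline{2}}=\{0\}$, only a few distributions of odd variables can give a nonzero image: either all variables neutral; or exactly one variable of degree $\overline{1}$ (rest neutral); or exactly one variable of degree $\overline{2}$ (rest neutral); or exactly two variables of degree $\overline{1}$ (rest neutral). Any other distribution forces $Im(f)=\{0\}$, which is a homogeneous subspace, so is covered trivially.

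First I would treat the neutral case: $f=f(y_1,\dots,y_m)$. Here $1\in\mathcal{A}_{\overline{0}}$, so by Proposition~\ref{basicprop}(2) if the sum of the coefficients of $f$ is nonzero then $Im(f)=\mathcal{A}_{\overline{0}}$; if the sum is zero, one uses the associativity-type identities $(y_1,y_2,y_3)\equiv 0$ to reduce $f$ modulo $Id^{gr}(UJ_3)$ to a combination of left-normed associative-like monomials $y_{\sigma(1)}\cdots y_{\sigma(m)}$, and since the diagonal blocks behave like the commutative algebra $F^3$, the image is $\{0\}$. Second, the cases with exactly one odd variable of degree $\overline{2}$, or with two odd variables of degree $\overline{1}$: in both situations $Im(f)$ sits inside the one-dimensional space $\mathcal{A}_{\overline{2}}=span\{e_{13}\}$, and Proposition~\ref{basicprop}(1) finishes it — $Im(f)$ is $\{0\}$ or all of $\mathcal{A}_{\overline{2}}$.

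The substantive case is $f=f(y_1,\dots,y_{m-1},z_m)$ with $\deg z_m=\overline{1}$, so $Im(f)\subseteq\mathcal{A}_{\overline{1}}=span\{e_{12},e_{23}\}$. Here I would invoke Lemma~\ref{ljordanUJ3}: modulo the $T$-ideal of the listed identities, $f$ is a linear combination of monomials $Y_1(z_mY_2)$ with $Y_1$, $Y_2$ increasingly ordered products of even variables and $Y_1<Y_2$. One then evaluates the even variables by generic diagonal matrices $D_i=w_1^{(i)}e_{11}+w_2^{(i)}e_{22}+w_3^{(i)}e_{33}$ and $z_m$ by $w^{(m)}e_{12}+v^{(m)}e_{23}$. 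A direct computation (using that the Jordan product of a diagonal matrix with $e_{12}$ or $e_{23}$ scales the off-diagonal entry and that $D_i\circ D_j = 2D_iD_j$) shows $f(D_1,\dots,D_{m-1},z_m)=p_{12}\,w^{(m)}e_{12}+p_{23}\,v^{(m)}e_{23}$ for commutative polynomials $p_{12}$, $p_{23}$ in the $w^{(i)},w_2^{(i)}$; crucially, distinct monomials $Y_1(z_mY_2)$ produce distinct monomials in $p_{12}$ (and in $p_{23}$), by tracking which even variables sit ``before'' $z_m$ (in $Y_1$) versus ``after'' it (in $Y_2$). Hence $p_{12}=0$ iff $p_{23}=0$ iff $f\in Id^{gr}(UJ_3)$. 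If $f\notin Id^{gr}(UJ_3)$, since $F$ is infinite we apply Lemma~\ref{lcomutpoly} to make $p_{12}$ and $p_{23}$ simultaneously nonzero at a diagonal evaluation, and then $w^{(m)}$, $v^{(m)}$ realize an arbitrary element of $\mathcal{A}_{\overline{1}}$. Thus $Im(f)=\mathcal{A}_{\overline{1}}$.

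The main obstacle I expect is precisely the bookkeeping in this last case: verifying that the monomial basis of Lemma~\ref{ljordanUJ3} maps to linearly independent ``coefficient monomials'' in $p_{12}$ and $p_{23}$, so that non-identity of $f$ genuinely forces both $p_{12}\neq 0$ and $p_{23}\neq 0$ (rather than, say, $p_{12}$ vanishing while $p_{23}$ survives, which would only give part of $\mathcal{A}_{\overline{1}}$). This requires care with the Jordan (non-associative) bracketing $Y_1(z_mY_2)$ and with the position of $z_m$ relative to each even variable; the key observation making it work is that in a monomial $Y_1(z_mY_2)$ the variables of $Y_1$ and of $Y_2$ occupy disjoint, canonically ordered slots, so the induced monomial in $w^{(m)}$'s coefficient records exactly the partition of $\{y_1,\dots,y_{m-1}\}$ into $(Y_1,Y_2)$, which is faithful.
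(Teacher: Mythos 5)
Your proposal is correct and follows essentially the same route as the paper: the same case split by the total degree of $f$, the reduction of the substantive case $\deg f=\overline{1}$ to the normal form $Y_1(z_mY_2)$ with $Y_1<Y_2$ of Lemma~\ref{ljordanUJ3}, the generic diagonal evaluation producing $p_1w_1^{(m)}e_{12}+p_2w_2^{(m)}e_{23}$, the observation that the coefficient monomials record the partition into $(Y_1,Y_2)$ faithfully so that $p_1$ and $p_2$ vanish only if $f$ is an identity modulo $T$, and Lemma~\ref{lcomutpoly} to conclude. The only (harmless) differences are presentational: you enumerate the admissible distributions of odd variables explicitly, and you handle the all-neutral case via Proposition~\ref{basicprop}(2) plus left-normed reduction where the paper computes directly that every monomial evaluates to $2^{m-1}D_1\cdots D_m$ on diagonal matrices.
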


\begin{proof}
Since $f$ is a homogeneous polynomial in the graded algebra $J(X)$ and $z_{1}z_{2}\equiv0$ holds on $UJ_{3}$, for $\deg z_{1}+\deg z_{2}=\overline{0}$, we will consider the following three cases in our proof.

{\it Case 1:} $\deg f=\overline{0}$. Here we must have $f=f(y_{1},\dots,y_{m})$ and the proof is the same as the first paragraph of the proof of Lemma \ref{l1jordan}. 

{\it Case 2:} $\deg f=\overline{1}$. Let $f=f(y_{1},\dots,y_{m-1},z_{m})$ be such that $\deg z_{m}=\overline{1}$. By Lemma \ref{ljordanUJ3}, modulo $T$, we may write $f$ as a linear combination of monomials of the form $Y_{1}(z_{m}Y_{2})$, where $Y_{1}<Y_{2}$. On the other hand, given 
\begin{equation}\label{J3evaluation}
y_{i}=\sum_{k=1}^{3}w_{k}^{(i)}e_{kk} \ \mbox{and} \ z_{m}=w_{1}^{(m)}e_{12}+w_{2}^{(m)}e_{23},
\end{equation}
note that $y_{i}z_{m}=\begin{pmatrix}
     0&(w_{1}^{(i)}+w_{2}^{(i)})w_{1}^{(m)} &0  \\
     &0 &(w_{2}^{(i)}+w_{3}^{(i)})w_{2}^{(m)}  \\ 
     & &0
\end{pmatrix}$ and then
\[
f(y_{1},\dots,y_{m-1},z_{m})=\begin{pmatrix}
     0&p_{1}w_{1}^{(m)} &0  \\
     &0 &p_{2}w_{2}^{(m)}  \\ 
     & &0
\end{pmatrix}
\]
where $p_{1}$ and $p_{2}$ are polynomials in the variables $w^{(i)}$, $i=1$, \dots, $m-1$. We claim that if $f\neq0$ modulo $T$, then $p_{1}\neq0$ and $p_{2}\neq0$. Indeed, consider the monomial $\mathbf{m}=\alpha Y_{1}(z_{m}Y_{2})$, where   $Y_{1}=y_{j_{1}}\cdots y_{j_{r}}$, $Y_{2}= y_{l_{1}}\cdots y_{l_{s}}$ and $Y_{1}<Y_{2}$. Note that the $(1,2)$ entry of the image of $\mathbf{m}$ under the evaluation (\ref{J3evaluation}) is given by
\[
2^{a}\alpha (w_{1}^{(j_{1})}\cdots w_{1}^{(j_{r})}+w_{2}^{(j_{1})}\cdots w_{2}^{(j_{r})})w_{1}^{(m)}(w_{1}^{(l_{1})}\cdots w_{1}^{(l_{s})}+w_{2}^{(l_{1})}\cdots w_{2}^{(l_{s})}) 
\]
for some power of $2$. Hence $p_{1}$ contains the following monomials 
\[
2^{a}\alpha w_{1}^{(j_{1})}\cdots w_{1}^{(j_{r})}w_{2}^{(l_{1})}\cdots w_{2}^{(l_{s})} \ \mbox{and} \ 2^{a}\alpha w_{2}^{(j_{1})}\cdots w_{2}^{(j_{r})}w_{1}^{(l_{1})}\cdots w_{1}^{(l_{s})}.
\]
Since $Y_{1}<Y_{2}$, the two monomials above can only be obtained from the monomial $\mathbf{m}$. Hence, if $f\neq 0$ modulo $T$, then $f$ contains some monomial $\mathbf{m}$ as above for some nonzero $\alpha$, which will imply in nonzero monomials in $p_{1}$ that are not multiple of any other one that comes from the remaining monomials of $f$. The same ideas also prove that $p_{2}\neq0$.

Now we use the fact that $F$ is infinite to get evaluations of the even variables for diagonal matrices such that $p_{1}$ and $p_{2}$ assume nonzero values on $F$, simultaneously. We finally use the variables $w_{1}^{(m)}$ and $w_{2}^{(m)}$ to get arbitrary odd matrices in $Im(f)$, that is, $Im(f)=(UJ_{3})_{\overline{1}}$.

{\it Case 3:} $\deg f=\overline{2}$. This last case follows from the fact that the homogeneous component of degree $\overline{2}$ is one dimensional. 
\end{proof}

\section*{Funding}

P. Fagundes was supported by S\~ao Paulo Research Foundation (FAPESP), grant \# 2019/16994-1. P. Koshlukov was partially supported by S\~ao Paulo Research Foundation (FAPESP), grant \# 2018/23690-6 and by CNPq grant
\# 302238/2019-0.

\end{document}